\renewcommand{\norm}[1]{\left\lVert#1\right\rVert}
\renewcommand{\abs}[1]{\left\lvert#1\right\rvert}
\pgfplotsset{
  width=.65\linewidth,
  axis background/.style={fill=black!5!white},
  grid style={densely dotted,semithick},
  legend style={
    legend columns=1,
    legend pos=outer north east
  },
  compat=newest % compatibility for old pgfplots versions
}
\numberwithin{equation}{section}
\newcommand{\Div}{\divergence}
\newcommand{\dd}{\,\mathrm{d}}
\newcommand{\ds}{\dd s}
\newcommand{\dt}{\dd t}
\newcommand{\dx}{\dd x}
\providecommand{\seminormtmp}[2]{{#1[{#2}#1]}}
\providecommand{\seminorm}[1]{\seminormtmp{}{#1}}
\begin{document}

\title[Stochastic $p$-Stokes system]{Temporal regularity of symmetric stochastic $p$-Stokes systems}

\author{J\"{o}rn Wichmann}%
\address[J. Wichmann]{Department of Mathematics, University of Bielefeld,
  Postfach 10 01 31, 33501 Bielefeld, Germany}%
\email{jwichmann@math.uni-bielefeld.de}%
\thanks{ The research was funded by the Deutsche Forschungsgemeinschaft (DFG, German Research Foundation) – SFB 1283/2 2021 – 317210226. }

\begin{abstract}
 We study the symmetric stochastic $p$-Stokes system, $p \in (1,\infty)$, in a bounded domain. The results are two-folded.
 
First, we show that in the context of analytically weak solutions, the stochastic pressure -- related to non-divergence free stochastic forces -- enjoys almost $-1/2$ temporal derivatives on a Besov scale. 

Second, we verify that the velocity component~$u$ of strong solutions obeys $1/2$ temporal derivatives in an exponential Nikolskii space. Moreover, we prove that the non-linear symmetric gradient $V(\varepsilon u) = (\kappa + \abs{\varepsilon u})^{(p-2)/2} \varepsilon u$, $\kappa \geq 0$, has $1/2$ temporal derivatives in a Nikolskii space.

\end{abstract}

\subjclass[2010]{%
35K55, %Nonlinear parabolic equations
   35K65, %Degenerate parabolic equations
   35K67, %Singular parabolic equations
   35R60, %PDEs with randomness, stochastic partial differential equations
   	35D35, % Strong solutions to PDEs
   	35B65, %Smoothness and regularity of solutions to PDEs
	60H15 % Stochastic partial differential equations 
}
\keywords{SPDEs, Non-linear Laplace-type systems, Strong solutions, Regularity, Stochastic $p$-Stokes system, power-law fluids, generalized fluids}
    % MSC

\maketitle

\tableofcontents
\section{Introduction}
Let $\mathcal{O} \subset \mathbb{R}^n$ be a bounded domain, $n \in \mathbb{N}$ and $T > 0$ be finite. Given an initial datum $u_0$ and a stochastic forcing~$G(\cdot) \dd W$, we are seeking for a velocity field $u: \Omega \times (0,T) \times \mathcal{O} \to \mathbb{R}^n$ and a pressure $\pi: \Omega \times (0,T) \times \mathcal{O} \to \mathbb{R}$ that satisfy the relations 
\begin{subequations} \label{ch:Stokes intro:p-Stokes-stoch}
\begin{alignat}{2} \label{ch:Stokes intro:p-Stokes-stoch01}
\dd u + \big( - \Div S(\varepsilon u)  + \nabla \pi \big)\dt  &= G(u) \dd W(t) \quad &&\text{ in } \Omega \times (0,T) \times \mathcal{O}, \\ \label{ch:Stokes intro:p-Stokes-stoch02}
\Div u &= 0 \quad &&\text{ in } \Omega \times (0,T) \times \mathcal{O}, \\\label{ch:Stokes intro:p-Stokes-stoch03}
u &= 0 \quad &&\text{ on } \Omega \times (0,T) \times \partial \mathcal{O}, \\ \label{ch:Stokes intro:p-Stokes-stoch04}
u(0) &= u_0 &&\text{ on } \Omega \times \mathcal{O}, \\\label{ch:Stokes intro:p-Stokes-stoch05}
\mean{\pi}_{\mathcal{O}} &= 0 &&\text{ in } \Omega \times (0,T),
\end{alignat}
\end{subequations}
where $S(\xi):= \left( \kappa + \abs{\xi} \right)^{p-2} \xi$, $\xi \in \mathbb{R}^{ n \times n}$, $p \in (1,\infty)$, $\kappa \geq 0$, $\varepsilon u = 2^{-1}\big( \nabla u + (\nabla u)^T \big)$ and $\mean{\pi}_{\mathcal{O}} = \dashint_{\mathcal{O}} \pi \dd x$. The stochastic input $G(\cdot) \dd W$ is rendered via a cylindrical Wiener process~$W$ on an abstract separable Hilbert space $U$ and a suitable noise coefficient $G$.

The system~\eqref{ch:Stokes intro:p-Stokes-stoch} is called stochastic symmetric $p$-Stokes system and describes the evolution of power-law fluids in the regime of laminar flow. Ultimately, one wants to substitute~\eqref{ch:Stokes intro:p-Stokes-stoch01} by
\begin{align}\label{ch:Stokes intro:p-Stokes-stochFull}
\dd u + \big( (u\cdot \nabla)u - \Div S(\varepsilon u)  + \nabla \pi \big)\dt  = G(u) \dd W(t)
\end{align}
to include convective effects. This might lead to turbulent flow. Maybe most prominent is the case $p=2$. Then the system reduces to the infamous stochastically forced Navier-Stokes equations. A derivation of this model and how meaningful choices of $G$ might look like can be found in e.g.~\cite{MR2050201} (see also~\cite{MR3607454} and the references therein). 

While the system~\eqref{ch:Stokes intro:p-Stokes-stoch} does not bother with the mathematical challenges of turbulence, it is still difficult to obtain improved regularity. There are several reasons:
\begin{enumerate}
\item Solutions to non-linear equations lack smoothness even for smooth data. 
\item Limited regularity of the stochastic data $G(\cdot) \dd W$ leads to limited regularity of the solution.
\item Stochastic integration in general Banach spaces causes technical restrictions.
\item Delicate interplay between the divergence free constrain~\eqref{ch:Stokes intro:p-Stokes-stoch02} and the pressure.
\end{enumerate}
In this article we focus on the temporal regularity for solutions to the system~\eqref{ch:Stokes intro:p-Stokes-stoch}. Before we comment on the available literature on regularity and finally state the main contributions of this paper, we briefly discuss some results related to the existence of non-Newtonian fluids. 

\subsection{Existence}
A mathematical theory for the existence of weak solutions to the deterministic counterpart of~\eqref{ch:Stokes intro:p-Stokes-stoch} enriched by the convective term was initiated by Ladyzhenskaya~\cite{MR0226907,MR0254401} and Lions~\cite{MR0259693}. Motivated by the works, many authors contributed to the construction of solutions, e.g. \cite{MR1203271,MNRR,MR1799487,MR2305828,DRW}. Ultimately leading to the construction of a weak solution for $p > 2n/(n+2)$. Below the compactness threshold $W^{1,p}_x \hookrightarrow \hookrightarrow L^2_x$ wild solutions may emerge, see e.g.~\cite{MR4073888}. 

The stochastic case is less known. An answer for solution independent noise coefficient was given in~\cite{MR2884052,MR2977990}. The authors require the condition $p > 9/5$ in the three dimensional setting. It was extended in~\cite{Breit2015} to cover more general noise coefficients and a unified condition $p > (2n+2)/(n+2)$. 

A construction for stochastic electro-rheological fluids, here $p$ also depends on $\Omega \times (0,T) \times \mathcal{O}$, is done in~\cite{MR4022286}.

\subsection{Regularity}
Regularity is needed in order to control discretizations. In particular, regularity moderates the speed of convergence of numerical schemes, see e.g.~\cite{MR4286257}. Therefore, and out of mathematical curiosity, many authors have investigated the regularity of solutions to~\eqref{ch:Stokes intro:p-Stokes-stoch} and related models. 

%In the following we will comment on the difficulties that arise through the non-linear structure of the system as well as the stochastic data. Moreover, we discuss related regularity results.

%Power-law fluids are discussed e.g. in~\cite{MR1603694,MR1754359,MR1810507,MR2004292,MR2122416,
%MR2824489,Br2,MR4188683}). Electro-rheological fluids, i.e., the power-law index $p$ also depends on time and space, are considered e.g. in~\cite{MR1810360,AcMi,diening2002theoretical,MR2077446,
%MR2346460,MR2317489,MR4022286}).

\subsubsection{Non-linear} In general, there is a barrier that does not allow for arbitrary high regularity even though the data is smooth, cf.~\cite{IwMa}. Still partial results can be obtained.

$C^{1,\alpha}$-regularity has been obtained in e.g.~\cite{MR1675954,AcMi,MR2346460,MR2317489}. H\"older regularity is sub-optimal from a numerical point of view. Instead, regularity of the expression $V(\varepsilon u) = (\kappa + \abs{\varepsilon u})^{(p-2)/2} \varepsilon u$ is more suitable for the investigation of numerical algorithms as observed by Barrett and Liu in~\cite{BaLi1}. Indeed, the operator $V$ measures the monotonicity of $S$ in a natural way, i.e., for all $A,B \in \mathbb{R}^{n\times n}$ it holds
\begin{align*}
\big( S(A) - S(B) \big) : (A- B) \eqsim \abs{V(A) - V(B)}^2.
\end{align*}

A first result for non-singular shear thinning fluids, $\kappa = 1$ and $p \in (1,2)$, has been obtained in~\cite{MR964511}. The author verifies local regularity $V(\varepsilon u) \in W^{1,2}_{\mathrm{loc}}$. 

A similar result has been obtained for shear thickening fluids, $p \geq 2$, in~\cite{Br2}. Additionally, local regularity for the non-linear tensor $S$ and the pressure~$\pi$ was found in~\cite{MR3208792}. They show $\nabla S(\varepsilon u), \nabla \pi \in L^{p'}_{\mathrm{loc}}$.

In~\cite{MR3275261} regularity transfer of the data to the solution in terms of Campanato and weighted BMO (bounded mean oscillation) spaces is shown.

Spatial regularity results for the parabolic $p$-Stokes system has been obtained in~\cite{AcMiSe}. The authors show that $\nabla u \in L^\infty_t L^2_{\mathrm{loc}}$ and $\nabla V(\varepsilon u) \in L^2_{\mathrm{loc}}(\mathcal{O}_T)$. It was extended to the $\phi$-Stokes problem in~\cite{MR3634347}.

As far as we know, improved temporal regularity for the parabolic symmetric $p$-Stokes system has not been analyzed in the literature. Only a few results that neglect the appearance of the pressure but keep the symmetric gradient are available. For example, the parabolic symmetric $p$-Laplace system has been studied by Frehse and Schwarzacher in~\cite{MR3411724} and by Burczak and Kaplick\'{y} in~\cite{MR3565947,}. In~\cite{MR3411724} it is shown that $\partial_t u \in B^{1/2}_{2,\infty} L^2_x$. Here $B$ denotes a Besov space (see Section~\ref{sec:Function spaces} for more details).

\subsubsection{Stochastic -- spatial regularity}
In general, the tools from deterministic theory are not available for stochastic equations. In particular, one cannot test the equation by a test function. Instead, one needs to perform an expansion for a suitable functional. This method is called It\^o's formula. While some test functions can be recovered in this way others cannot. Still, the deterministic regularity theory acts as a guiding example on what regularity can be expected.

It has been observed by Gess~\cite{Gess2012Strong} that stochastic partial differential equations, arising as a stochastically perturbed gradient flow of an energy, can lead to improved regularity results. He proposes conditions that ensure the existence of a strong solution.

Fortunately, the Helmholtz-Leray projection~$\Pi_{\Div}$ (see~\eqref{app:FuncSpace def:Helmholtz}) allows to reformulate~\eqref{ch:Stokes intro:p-Stokes-stoch01} and~\eqref{ch:Stokes intro:p-Stokes-stoch02} into 
\begin{subequations} \label{ch:intro intro:p-Stokes-stoch00New}
\begin{alignat}{2} \label{ch:intro  intro:p-Stokes-stoch01New}
\dd u  - \Pi_{\Div} \Div S(\varepsilon u) \dt  &= \Pi_{\Div} G(u) \dd W(t) \quad &&\text{ in } \Omega \times (0,T) \times \mathcal{O}, \\ \label{ch:intro  intro:p-Stokes-stoch02New}
\nabla \pi \dd t &= \Pi_{\Div}^\perp \big( \Div S(\varepsilon u) \dt + G(u) \dd W(t)  \big)\quad &&\text{ in } \Omega \times (0,T) \times \mathcal{O}.
\end{alignat}
\end{subequations}
In other words, the Helmholtz-Leray projection factorizes the equations for the velocity field and the pressure. It enables to seek for the velocity independently of the pressure.

Now it is possible to interpret~\eqref{ch:Stokes intro:p-Stokes-stoch01New} as a stochastically perturbed gradient flow of the energy
\begin{align} \label{ch:intro eq:Energy}
\mathcal{J}(u) := \int_\mathcal{O} \varphi_{\kappa} ( \abs{\varepsilon u}) \dd x
\end{align}
induced by the potential $\varphi_{\kappa}(t) := \int_0^t (\kappa + s)^{p-2} s \dd s$ on the space $W^{1,p}_{0,\Div}$. Indeed,~\eqref{ch:Stokes intro:p-Stokes-stoch01New} can be written as
\begin{align} \label{intro:GradFlow}
\dd u = - D \mathcal{J}(u) \dd t + \Pi_{\Div} G(u) \dd W(t) \quad \text{ in } \Omega \times (0,T) \times \mathcal{O},
\end{align}
where $D \mathcal{J}(u) = \Pi_{\Div} \Div S(\varepsilon u)$. Therefore, it is natural to look for regularity estimates that match the concept of strong solutions.

A similar approach is used by Breit and Gmeineder in~\cite{MR4022286}. They prove spatial regularity as in the deterministic case also for stochastically forced electro-rheological fluids and obtain $\nabla u \in L^2_\omega L^\infty_t L^2_x$ as well as $\nabla V(\varepsilon u) \in L^2_\omega L^2_t L^2_x$. An analogous line of argumentation can be used to establish spatial regularity for the $p$-Stokes system. 

\subsubsection{Stochastic -- temporal regularity}
In sharp contrast to spatial regularity, where similar results as in the deterministic theory can be expected, is temporal regularity for stochastic partial differential equations. In general, temporal regularity is substantially worse compared to deterministic equations. This is due to the action of the random data $G(\cdot) \dd W$ on the solution~$u$.

Even in the simplest case $G\equiv 1$, we expect $u$ to not exceed the regularity threshold induced by the Wiener process~$W$. A precise regularity result was derived by Hyt\"onen and Veraar in~\cite{VerHyt08}. They show that $\mathbb{P}$-a.s. $W \in B^{1/2}_{\Phi_2,\infty}$, where $\Phi_2(t) = \exp(t^2)-1$, and $W \not \in B^{1/2}_{p,q}$ for any $p \in [1,\infty]$ and $q \in [1,\infty)$. Therefore, a natural space for temporal regularity is the Nikolskii-space of functions with exponential second moments and $1/2$ derivatives.

Only recently, precise regularity results for stochastic integrals in $2$-smooth Banach spaces have been found by Ondreját and Veraar in~\cite{MR4116708}. Maybe most important, stochastic integration behaves as deterministic integration in the sense that improved regularity of the integrand leads to improved regularity of the stochastic integral itself. For example, if the integrand is bounded in time, then the stochastic integral is as regular as a Wiener process, i.e.,
\begin{align*}
G \in L^\infty \quad \Rightarrow \quad \int_0^{\cdot} G \dd W(t) \in B^{1/2}_{\Phi_2,\infty}.
\end{align*}

Exactly this fact is used in~\cite{wichmann2021temporal} to derive temporal regularity for the stochastic $p$-Laplace system. In particular, we manage to recover the limiting temporal regularity $u \in L^2_\omega B^{1/2}_{\Phi_2,\infty} L^2_x$. In other words, the regularity of $W$ fully transfers to regularity of $u$. Additionally, exploiting the $V$-coercivity, it is possible to derive non-linear gradient regularity $V(\nabla u) \in L^2_\omega B^{1/2}_{2,\infty} L^2_x$.

The stochastic $p$-Stokes system is even worse. A major obstacle in the derivation of higher regularity for the system~\eqref{ch:Stokes intro:p-Stokes-stoch} is the pressure. If $G(\cdot) \dd W$ is not divergence-free, then the pressure is very rough in time (see e.g.~\cite{MR2004282,MR3022227}). This can be seen by formally decomposing the pressure into a deterministic and a stochastic component $\pi = \pi_{\mathrm{det}} + \pi_{\mathrm{sto}}$, where
\begin{align*}
\nabla \pi_{\mathrm{det}} &= \Pi_{\mathrm{div}}^{\perp} \Div S(\varepsilon u), \\
\nabla \pi_{\mathrm{sto}} &= \Pi_{\mathrm{div}}^{\perp} G(u) \dot{W}.
\end{align*} 

While the deterministic pressure can be treated by classical arguments, we observe that the stochastic pressure looks like white noise in time. This leads to severe difficulties in numerical discretizations as soon as we approximate the velocity field by discretely divergence-free fields rather than exactly divergence-free fields. More details can be found in~\cite{MR4274685,MR4286261} and the references therein.

\subsection{Main results}
The main contributions of this paper are two-folded.

\subsubsection{Temporal regularity of stochastic pressure for weak solutions}
We apply the general theory of monotone stochastic partial differential equations developed by Liu and R\"ockner~\cite{LiRo}, see also the book \cite{MR3410409}, to the $p$-Stokes system. More recently, the theory has been refined to cover more general equations, see e.g.~\cite{2022arXiv220601107R,2022arXiv220600230A}. In this way, existence of weak solutions is almost immediate. The main novelty is a careful consideration of the regularity of the stochastic pressure.  

\begin{theorem}\label{ch:Stokes thm:EstimateWeakSolution}
Let $p \in (1,\infty)$ and $\mathcal{O}$ be a bounded $C^2$-domain. Additionally, let $q \in [1,\infty)$, $u_0 \in L^{q}_\omega L^2_{\Div}$ be $\mathcal{F}_0$-measurable and Assumption~\ref{ch:Stokes ass:NoiseCoeffweak} be satisfied. Then there exists a unique weak solution $(u,\pi)$ to~\eqref{ch:Stokes intro:p-Stokes-stoch} such that
\begin{subequations}
\begin{align} \label{ch:Stokes eq:EstimateWeakSolution}
\mathbb{E} \left[ \sup_{t\in I} \norm{u(t)}_{L^2_x}^{q} + \left( \int_I \mathcal{J}\big(u(t) \big) \dd t \right)^{q/2} \right] &\lesssim \mathbb{E} \left[ \norm{u_0}_{L^2_x}^{q} \right] + 1.
\end{align}
Moreover, $\pi = \pi_{\mathrm{det}} + \pi_{\mathrm{sto}} $ with
\begin{align}
\label{ch:Stokes eq:EstimateWeakSolution02}
\norm{\pi_{\mathrm{det}}}_{L^{p'q /2}_\omega L^{p'}_t L^{p'}_{x}}^{p'  /2} &\lesssim  \norm{u_0}_{L^q_\omega L^2_x}+ 1, \\ \label{ch:Stokes eq:EstimateWeakSolution03}
\norm{\pi_{\mathrm{sto}}}_{L^{q}_\omega B^{-1/2 - \varepsilon}_{s,r}  W^{1,2}_x} &\lesssim  \norm{u_0}_{L^q_\omega L^2_x} + 1, 
\end{align}
for any $s, r \in (1,\infty)$ and $\varepsilon >0$.
\end{subequations} 
\end{theorem}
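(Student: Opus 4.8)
The plan is to first solve for the velocity via the variational theory of monotone SPDEs and then reconstruct the pressure from the reformulated momentum balance. After applying the Helmholtz--Leray projection as in~\eqref{ch:intro intro:p-Stokes-stoch00New}, the velocity solves the gradient-flow equation~\eqref{intro:GradFlow}, which I would set in the Gelfand triple $W^{1,p}_{0,\Div}\hookrightarrow L^2_{\Div}\hookrightarrow (W^{1,p}_{0,\Div})^*$. First I would verify the structural hypotheses of the Liu--R\"ockner framework~\cite{LiRo,MR3410409} for the operator $u\mapsto \Pi_{\Div}\Div S(\varepsilon u)=-D\mathcal{J}(u)$: hemicontinuity, monotonicity (which is precisely the relation $\big(S(A)-S(B)\big):(A-B)\eqsim\abs{V(A)-V(B)}^2$ combined with Korn's inequality, needed because only the symmetric gradient appears), coercivity against the energy $\mathcal{J}$, and the $(p-1)$-growth bound $\abs{S(\varepsilon u)}\lesssim(\kappa+\abs{\varepsilon u})^{p-1}$, together with the growth and Lipschitz conditions on $G$ from Assumption~\ref{ch:Stokes ass:NoiseCoeffweak}. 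This yields a unique variational solution $u$. The a priori bound~\eqref{ch:Stokes eq:EstimateWeakSolution} I would obtain from It\^o's formula applied to $\norm{u(t)}_{L^2_x}^2$, raising to the power $q/2$, estimating the martingale term by Burkholder--Davis--Gundy, absorbing the noise contribution through its linear growth, and closing with Gronwall.

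For the pressure I would invoke the De Rham / Ne\v{c}as pressure theory on the $C^2$-domain $\mathcal{O}$ and split $\pi=\pi_{\mathrm{det}}+\pi_{\mathrm{sto}}$ exactly along the deterministic and stochastic parts of the balance, as in the formal decomposition of the introduction. The deterministic pressure obeys the stationary $L^{p'}$ estimate $\norm{\pi_{\mathrm{det}}(t)}_{L^{p'}_x}\lesssim\norm{S(\varepsilon u(t))}_{L^{p'}_x}$ in the zero-mean class. Since $\abs{S(\varepsilon u)}^{p'}\lesssim 1+\varphi_{\kappa}(\abs{\varepsilon u})$, integrating in space and time gives $\norm{\pi_{\mathrm{det}}}_{L^{p'}_tL^{p'}_x}^{p'}\lesssim 1+\int_I\mathcal{J}(u)\dt$. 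Raising to the power $q/2$, taking expectations, inserting~\eqref{ch:Stokes eq:EstimateWeakSolution} and finally taking the $q$-th root produces~\eqref{ch:Stokes eq:EstimateWeakSolution02}.

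The stochastic pressure is the heart of the matter and the step I expect to be hardest. Since $\pi_{\mathrm{sto}}$ is only a distribution in time, I would work with its time primitive: writing $M(t):=\int_0^t G(u)\dd W(s)$ and letting $\mathcal{P}\colon L^2_x\to W^{1,2}_x$ be the bounded pressure operator solving $\nabla\mathcal{P}f=\Pi_{\Div}^\perp f$ in the zero-mean class, the primitive of $\pi_{\mathrm{sto}}$ equals $\mathcal{P}M$. The key input is the fine stochastic-integral regularity theory of Ondrej\'at--Veraar~\cite{MR4116708}: because the a priori estimate and Assumption~\ref{ch:Stokes ass:NoiseCoeffweak} make $G(u)$ bounded in time with values in the relevant Hilbert--Schmidt class, the integral $M$ inherits the Wiener regularity $B^{1/2}_{\Phi_2,\infty}$ in time, hence lies in $B^{1/2-\varepsilon}_{s,r}$ for every $s,r\in(1,\infty)$ and $\varepsilon>0$ by the embedding of exponential Nikolskii spaces into $L^s$-based scales. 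The operator $\mathcal{P}$ acts pathwise only in space, is bounded into $W^{1,2}_x$, and therefore transports this temporal regularity to $\mathcal{P}M\in B^{1/2-\varepsilon}_{s,r}W^{1,2}_x$. Differentiating once in time lowers the Besov scale by one, so $\pi_{\mathrm{sto}}=\partial_t\mathcal{P}M\in B^{-1/2-\varepsilon}_{s,r}W^{1,2}_x$; taking $L^q_\omega$ moments and using the bound on $G(u)$ gives~\eqref{ch:Stokes eq:EstimateWeakSolution03}.

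The main obstacle is precisely the interplay in this last step: the stochastic pressure exists only as a negative-order time distribution, so it cannot be estimated pointwise in time but must be accessed through its time primitive and the sharp $B^{1/2}$ regularity of the stochastic integral, while simultaneously retaining the spatial $W^{1,2}$ pressure bound and justifying that the bounded operator $\mathcal{P}$ commutes with the stochastic integration. Reaching the full range $s,r\in(1,\infty)$ — rather than only $L^2$-based scales — is what forces the use of the exponential-moment estimates for stochastic integrals instead of a bare Kolmogorov continuity argument.
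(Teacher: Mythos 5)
Your proposal follows essentially the same route as the paper: Liu--R\"ockner variational theory plus It\^o/Gronwall for the velocity bound, a stationary Ne\v{c}as/Bogovskii-type $L^{p'}$ estimate for $\pi_{\mathrm{det}}$, and for $\pi_{\mathrm{sto}}$ the passage to the time primitive $\mathcal{K}_{\mathrm{sto}}=\mathcal{P}\,\mathcal{I}_W\big(G(u)\big)$, the Ondrej\'at--Veraar $B^{1/2}_{\Phi_2,\infty}$ regularity of the stochastic integral, and a final differentiation in time. The one step you assert rather than justify --- that $\partial_t$ maps $B^{1/2-\varepsilon}_{s,r}$ into $B^{-1/2-\varepsilon}_{s,r}$ on the bounded interval, with the negative-order space understood distributionally --- is precisely the content of the paper's Lemma~\ref{lem:pressureEstimate}, which needs the Fourier characterization of Besov norms, duality, and the derivative-lifting property (and is the reason the exponential scale $\Phi_2$ must be traded for $s,r\in(1,\infty)$); modulo that, your argument is correct.
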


\subsubsection{Temporal regularity of velocity field for strong solutions}
We exploit the gradient flow structure~\eqref{intro:GradFlow} on the space of solenoidal vector fields in order to use the existence theory of strong solutions initiated by Gess~\cite{Gess2012Strong}. In doing so, we provide a sufficient condition for the existence of strong solutions to the $p$-Stokes system. 

\begin{theorem} \label{ch:Stokes thm:StrongEnergy}
Let the assumptions of Theorem~\ref{ch:Stokes thm:EstimateWeakSolution} be satisfied. Additionally, let $q \in [1,\infty)$, $p \geq 2$, $ \mathcal{J}(u_0) \in L^{q}_\omega$ be $\mathcal{F}_0$-measurable and $G$ satisfy Assumption~\ref{ch:Stokes ass:NoiseCoeffstrong}. Then there exists a unique strong solution $(u,\pi)$ to~\eqref{ch:Stokes intro:p-Stokes-stoch}. Moreover, it holds
\begin{align} \label{ch:Stokes eq:StrongEnergy}
\mathbb{E} \left[ \left( \sup_{t \in I} \mathcal{J}\big( u(t) \big) + \int_0^T \int_{\mathcal{O}} \abs{\Pi_{\Div} \Div S(\varepsilon u)}^2 \dd x \dd t \right)^{q }\right] \lesssim \mathbb{E} \left[ \mathcal{J}\big( u_0 \big)^{q} \right] + 1.
\end{align}
\end{theorem}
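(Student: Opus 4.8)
The plan is to realise the gradient-flow formulation~\eqref{intro:GradFlow} within the variational framework for stochastically perturbed gradient flows of Gess~\cite{Gess2012Strong}, and then to upgrade the resulting qualitative existence statement to the quantitative bound~\eqref{ch:Stokes eq:StrongEnergy} by an It\^o expansion of the energy. First I would fix the Gelfand triple adapted to solenoidal fields, $W^{1,p}_{0,\Div} \hookrightarrow L^2_{\Div} \hookrightarrow (W^{1,p}_{0,\Div})^*$, and identify $\mathcal{J}$ as a proper, convex, lower semicontinuous functional on $L^2_{\Div}$ whose derivative satisfies $\norm{D\mathcal{J}(u)}_{L^2_x} = \norm{\Pi_{\Div}\Div S(\varepsilon u)}_{L^2_x}$, so that~\eqref{intro:GradFlow} is precisely the $\mathcal{J}$-gradient flow driven by $\Pi_{\Div}G(u)\dd W$. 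Convexity follows from the convexity of $t \mapsto \varphi_\kappa(t)$ together with $u \mapsto \abs{\varepsilon u}$, and for $p \geq 2$ the potential has the growth required to place $u \mapsto -\Pi_{\Div}\Div S(\varepsilon u)$ into the class of monotone, coercive, hemicontinuous operators acting between $W^{1,p}_{0,\Div}$ and its dual. Verifying that Assumption~\ref{ch:Stokes ass:NoiseCoeffstrong} supplies the compatibility between $\Pi_{\Div}G$ and $\mathcal{J}$ demanded by Gess then produces a strong solution with $\Pi_{\Div}\Div S(\varepsilon u) \in L^2_\omega L^2_t L^2_x$; uniqueness is inherited from the monotonicity of the operator, equivalently from the uniqueness already granted by Theorem~\ref{ch:Stokes thm:EstimateWeakSolution}, as any strong solution is also weak.

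For the quantitative estimate I would apply It\^o's formula to $t \mapsto \mathcal{J}(u(t))$. As $\mathcal{J}$ is not twice differentiable on $L^2_{\Div}$, this has to be carried out along an approximation—a Galerkin scheme or a Moreau--Yosida/$\lambda$-regularised energy $\mathcal{J}_\lambda$—for which the chain rule is licit, deriving the identity at the approximate level and passing to the limit using the strong-solution regularity. The expansion yields
\begin{align*}
\mathcal{J}(u(t)) + \int_0^t \norm{\Pi_{\Div}\Div S(\varepsilon u)}_{L^2_x}^2 \ds = \mathcal{J}(u_0) + M(t) + \tfrac{1}{2}\int_0^t \mathfrak{C}(u) \ds ,
\end{align*}
where $M(t) = \int_0^t \langle D\mathcal{J}(u), \Pi_{\Div}G(u)\dd W\rangle$ is a local martingale and, writing $(e_k)$ for an orthonormal basis of $U$, the It\^o correction is
\begin{align*}
\mathfrak{C}(u) = \sum_k \int_{\mathcal{O}} S'(\varepsilon u)\, \varepsilon\big(\Pi_{\Div}G(u)e_k\big) : \varepsilon\big(\Pi_{\Div}G(u)e_k\big) \dx .
\end{align*}
The dissipation term carries the favourable sign and remains on the left-hand side.

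The crux, and the main obstacle, is to control $\mathfrak{C}(u)$. Since $S'(\varepsilon u) \eqsim (\kappa + \abs{\varepsilon u})^{p-2}$ and the energy density obeys $\varphi_\kappa(\abs{\varepsilon u}) \eqsim (\kappa + \abs{\varepsilon u})^{p-2}\abs{\varepsilon u}^2$, the quantity $\mathfrak{C}(u)$ is a $W^{1,2}$-type norm of $\Pi_{\Div}G(u)$ weighted against the very density that defines $\mathcal{J}$. Assumption~\ref{ch:Stokes ass:NoiseCoeffstrong} is tailored precisely so that this weighted quantity satisfies $\mathfrak{C}(u) \lesssim 1 + \mathcal{J}(u)$; establishing this also uses the boundedness of $\Pi_{\Div}$ on $W^{1,2}_x$ afforded by the $C^2$-domain hypothesis inherited from Theorem~\ref{ch:Stokes thm:EstimateWeakSolution}. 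This step is what dictates the shape of the hypothesis on $G$ and is the heart of the argument.

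Granting that bound, the estimate closes in a standard manner. I would take the supremum over $t \in I$ and the $q$-th moment, control $M$ through the Burkholder--Davis--Gundy inequality by its quadratic variation, which is bounded by $\sup_{s \leq t}\big(1 + \mathcal{J}(u(s))\big)\int_0^t \norm{\Pi_{\Div}\Div S(\varepsilon u)}_{L^2_x}^2 \ds$ using $\norm{\Pi_{\Div}G(u)}^2 \lesssim 1 + \mathcal{J}(u)$, and then split by Young's inequality so that a small multiple of $\big(\int_0^T \norm{\Pi_{\Div}\Div S(\varepsilon u)}_{L^2_x}^2 \ds\big)^q$ is absorbed into the left-hand side. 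The remaining contributions, together with $\int_0^t \mathfrak{C}(u)\ds$, are of integral-in-time form $\int_0^t \mathbb{E}\big[\sup_{r \leq s}(1+\mathcal{J}(u(r)))^q\big]\ds$ and are dispatched by a stochastic Gr\"onwall lemma. Passing the uniform-in-$\lambda$ (or Galerkin) bound to the limit then yields~\eqref{ch:Stokes eq:StrongEnergy}.
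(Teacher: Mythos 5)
Your overall architecture coincides with the paper's: the proof there also casts \eqref{intro:GradFlow} into the framework of Gess~\cite{Gess2012Strong} on the solenoidal Gelfand triple, verifies the assumptions (A1)--(A6) (your It\^o correction $\mathfrak{C}(u)$ is exactly the quantity $\sum_k D^2\mathcal{J}(u)[w_k,w_k]$ with $w_k=\Pi_{\Div}g_k(\cdot,u)$ appearing in (A2), and the bound $\mathfrak{C}(u)\lesssim 1+\mathcal{J}(u)$ is obtained there by H\"older with exponents $p/(p-2)$, $p/2$ followed by Young and Assumption~\ref{ch:Stokes ass:NoiseCoeffstrong}), and then upgrades $\sup_t\mathbb{E}$ to $\mathbb{E}\sup_t$ by estimating the additional martingale term with Burkholder--Davis--Gundy and a Gronwall argument. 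So conceptually you have identified all the right ingredients.

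There is, however, a concrete gap in your closure of the moment estimate. You bound the quadratic variation of $M(t)=\int_0^t\langle D\mathcal{J}(u),\Pi_{\Div}G(u)\dd W\rangle$ by
\begin{align*}
\langle M\rangle_T \lesssim \sup_{s\le T}\big(1+\mathcal{J}(u(s))\big)\int_0^T\norm{\Pi_{\Div}\Div S(\varepsilon u)}_{L^2_x}^2\ds ,
\end{align*}
and then propose a Young split absorbing a small multiple of $\big(\int_0^T\norm{\Pi_{\Div}\Div S(\varepsilon u)}_{L^2_x}^2\ds\big)^{q}$ into the left-hand side. But then the complementary term is $c_\epsilon\,\mathbb{E}\big[\sup_{s}(1+\mathcal{J}(u(s)))^{q}\big]$ with a \emph{large} constant; it is not of the integral-in-time form $\int_0^T\mathbb{E}[\sup_{r\le s}(\cdot)]\ds$, so it can neither be absorbed nor handled by Gronwall. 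Splitting the other way leaves a large multiple of the dissipation, with the same problem. The paper avoids this by estimating the integrand of $\langle M\rangle$ differently: writing $D\mathcal{J}(u)[\Pi_{\Div}g_k]=\int_{\mathcal{O}}S(\varepsilon u):\varepsilon\Pi_{\Div}g_k\dd x$ and applying H\"older together with Assumption~\ref{ch:Stokes ass:NoiseCoeffstrong} and \eqref{eq:EquivalenceShift} yields
\begin{align*}
\langle M\rangle_T \lesssim \int_0^T\big(1+\mathcal{J}(u(s))\big)^2\ds \le \sup_{s\le T}\big(1+\mathcal{J}(u(s))\big)\int_0^T\big(1+\mathcal{J}(u(s))\big)\ds ,
\end{align*}
after which Young's inequality puts the small constant on $\sup_s(1+\mathcal{J})^{q}$ (absorbable) and the large constant on $\big(\int_0^T(1+\mathcal{J})\ds\big)^{q}\lesssim T^{q-1}\int_0^T(1+\mathcal{J})^{q}\ds$, which is Gronwall-compatible. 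With this replacement your argument closes; as written, the final step would fail.
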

 
We extend the techniques developed in~\cite{wichmann2021temporal} to obtain improved temporal regularity for the velocity field and its non-linear symmetric gradient. 

\begin{theorem} \label{ch:Stokes thm:BesovReg}
Let $p \in (1,\infty)$, Assumption~\ref{ch:Stokes ass:NoiseCoeffweak} be satisfied and $(u,\pi)$ be a strong solution to~\eqref{ch:Stokes intro:p-Stokes-stoch}. 
\begin{enumerate}
\item \label{it:a}(Polynomial integrability in time) Let $q \in [1,\infty)$. Then $u \in L^{2}_\omega B^{1/2}_{q,\infty} L^2_x$ and
\begin{align}\label{eq:mainNikolski01}
\norm{u}_{L^{2}_\omega B^{1/2}_{q,\infty} L^2_x} \lesssim  \norm{u}_{L^{2}_\omega L^\infty_t L^2_x} + \norm{\Pi_{\Div} \Div S(\varepsilon u)}_{L^2_\omega L^2_t L^2_x} + 1.
\end{align}
\item \label{it:b}(Exponential integrability in time) Let $u \in L^{N_2}_\omega L^\infty_t L^2_x$. Then $u \in L^{2}_\omega B^{1/2}_{\Phi_2,\infty} L^2_x$ and
\begin{align} \label{eq:mainNikolski}
\norm{u}_{L^{2}_\omega B^{1/2}_{\Phi_2,\infty} L^2_x} \lesssim  \norm{u}_{L^{N_2}_\omega L^\infty_t L^2_x} + \norm{\Pi_{\Div} \Div S(\varepsilon u)}_{L^2_\omega L^2_t L^2_x} + 1.
\end{align}
\end{enumerate}
\end{theorem}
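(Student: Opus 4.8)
The plan is to exploit the integral (mild) formulation~\eqref{ch:intro  intro:p-Stokes-stoch01New}, which provides the pathwise decomposition
\[ u(t) = u(0) + \int_0^t \Pi_{\Div}\Div S(\varepsilon u)(s)\ds + \int_0^t \Pi_{\Div}G(u)(s)\dd W(s) =: u_0 + u_{\mathrm{det}}(t) + u_{\mathrm{sto}}(t). \]
Since the Nikolskii seminorm is built from temporal increments $u(\cdot+h)-u(\cdot)$, the constant-in-time initial datum $u_0$ drops out, and by subadditivity it suffices to bound the temporal regularity of $u_{\mathrm{det}}$ and $u_{\mathrm{sto}}$ separately; the zeroth-order contribution to the full Besov norm (the $L^q_t L^2_x$, respectively $L^{\Phi_2}_t L^2_x$, part) is controlled by $\norm{u}_{L^\infty_t L^2_x}$ on the bounded interval~$I$. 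This mirrors the strategy of~\cite{wichmann2021temporal}, adapted to the solenoidal setting.

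For the deterministic part I would use that, by Theorem~\ref{ch:Stokes thm:StrongEnergy}, the drift $F := \Pi_{\Div}\Div S(\varepsilon u) \in L^2_\omega L^2_t L^2_x$, so $u_{\mathrm{det}}$ is pathwise of Sobolev class $W^{1,2}$ in time with values in $L^2_x$. Minkowski's and the Cauchy--Schwarz inequality then give the pointwise-in-time increment bound
\[ \norm{u_{\mathrm{det}}(t+h) - u_{\mathrm{det}}(t)}_{L^2_x} \le \int_t^{t+h} \norm{F(s)}_{L^2_x}\ds \le h^{1/2}\Bigl(\int_t^{t+h}\norm{F(s)}_{L^2_x}^2\ds\Bigr)^{1/2} \le h^{1/2}\norm{F}_{L^2_t L^2_x}. \]
Raising to the $q$-th power and integrating over $t$ (respectively taking the Luxemburg $L^{\Phi_2}_t$-norm, using that constants lie in $L^{\Phi_2}_t$ on the bounded interval), and then dividing by $h^{1/2}$, yields the pathwise estimate $\seminorm{u_{\mathrm{det}}}_{B^{1/2}_{q,\infty}L^2_x} \lesssim \norm{F}_{L^2_t L^2_x}$ (and the analogue with $\Phi_2$). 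Taking $L^2_\omega$ reproduces the deterministic contribution on the right-hand sides of~\eqref{eq:mainNikolski01} and~\eqref{eq:mainNikolski}; this step is essentially deterministic and is identical for items~\eqref{it:a} and~\eqref{it:b}.

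The crux is the stochastic integral $u_{\mathrm{sto}}$, where I would invoke the sharp maximal regularity of stochastic integrals in $2$-smooth Banach spaces due to Ondreját and Veraar~\cite{MR4116708}. Since $L^2_x$ is a Hilbert space, and the integrand $\Pi_{\Div}G(u)$ is, by the linear growth encoded in Assumption~\ref{ch:Stokes ass:NoiseCoeffweak} together with the $L^2$-boundedness of $\Pi_{\Div}$, controlled pointwise in time through $\norm{\Pi_{\Div}G(u)(s)}_{\gamma(U,L^2_x)} \lesssim 1 + \norm{u}_{L^\infty_t L^2_x}$, the integrand is bounded in time. The maximal inequality then transfers this $L^\infty_t$-boundedness into $B^{1/2}$-regularity of the integral: for item~\eqref{it:a} its polynomial version gives $u_{\mathrm{sto}} \in L^2_\omega B^{1/2}_{q,\infty}L^2_x$ with norm $\lesssim 1 + \norm{u}_{L^2_\omega L^\infty_t L^2_x}$, while for item~\eqref{it:b} the exponential (Orlicz) version yields $u_{\mathrm{sto}} \in L^2_\omega B^{1/2}_{\Phi_2,\infty}L^2_x$, the passage to the subgaussian time-integrability $\Phi_2(t)=\exp(t^2)-1$ being precisely what forces the higher moment exponent $N_2$ in $\omega$ appearing on the right-hand side of~\eqref{eq:mainNikolski}.

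Combining the two estimates via subadditivity of the seminorm, and adding the zeroth-order term, then gives~\eqref{eq:mainNikolski01} and~\eqref{eq:mainNikolski}. I expect the stochastic part to be the main obstacle: one must correctly match the Orlicz time-integrability $\Phi_2$ of the stochastic integral with the moment exponent $N_2$ of the integrand in~$\omega$, and verify the hypotheses of the maximal inequality of~\cite{MR4116708} — the $\gamma$-radonifying and adaptedness structure, and that the Helmholtz--Leray projection preserves the relevant operator-norm bounds on $\gamma(U,L^2_x)$. The deterministic part, by contrast, is routine once $F \in L^2_t L^2_x$ has been secured from Theorem~\ref{ch:Stokes thm:StrongEnergy}.
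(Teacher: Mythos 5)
Your proposal is correct and follows essentially the same route as the paper: decompose the increments via the strong formulation, bound the drift contribution by Cauchy--Schwarz in time against $\norm{\Pi_{\Div}\Div S(\varepsilon u)}_{L^2_\omega L^2_t L^2_x}$, and handle the stochastic integral with the Ondrej\'at--Veraar maximal regularity (Theorem~\ref{app:StochInt thm:Stability}) together with the sublinear growth of $G$ and the $L^2$-contractivity of $\Pi_{\Div}$, the $N_2$ moment in $\omega$ entering exactly as you describe for the $\Phi_2$ case. The only detail you leave implicit is that part~\ref{it:a} additionally uses the extrapolation remark of \cite[Remark~3.4]{MR4116708} to pass from the $L^{2q}_\omega$ moments in Theorem~\ref{app:StochInt thm:Stability}~\ref{app:StochInt it:2} to the stated $L^2_\omega$ bound, which is a minor point.
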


\begin{remark}
Theorem~\ref{ch:Stokes thm:BesovReg} works for the full range $p \in (1,\infty)$. However, the existence of a strong solution is still open for $p \in (1,2)$.

Exponential integrability requires slightly stronger moment estimates. In a similar way to the a priori bound~\eqref{ch:Stokes eq:EstimateWeakSolution} for weak solutions, one can establish an estimate of the form
\begin{align*}
\norm{u}_{L^{N_2}_{\omega} L^\infty_t L^2_x} \lesssim \norm{u_0}_{L^{N_2}_{\omega} L^2_x} + 1.
\end{align*}
In other words, if the initial condition has $N_2$ moments, so does the solution at later times and the assumption of part~\ref{it:b} is satisfied. 
\end{remark}

\begin{theorem} \label{ch:Stokes thm:NikolskiiGrad}
Let $p \geq 2$, Assumption~\ref{ch:Stokes ass:NoiseCoeffstrong} be satisfied and $(u,\pi)$ be a strong solution to~\eqref{ch:Stokes intro:p-Stokes-stoch}. Additionally, assume $\mathcal{J}(u) \in L^1_\omega L^\infty_t$. Then $V(\varepsilon u) \in L^2_\omega B^{1/2}_{2,\infty} L^2_x$ and
\begin{align}
\norm{V(\varepsilon u)}_{L^2_\omega B^{1/2}_{2,\infty} L^2_x}^2 \lesssim  \mathbb{E} \left[ \int_{I} \norm{ \Pi_{\Div} \Div S(\varepsilon u)}_{L^2_x}^2 \dd t\right]+ \mathbb{E} \left[ \sup_{t\in I} \mathcal{J}\big(u(t)\big) \right]+ 1.
\end{align}
\end{theorem}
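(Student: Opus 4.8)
The plan is to use the pointwise equivalence $(S(A)-S(B)):(A-B)\eqsim\abs{V(A)-V(B)}^2$ to turn the Nikolskii seminorm of $V(\varepsilon u)$ into a coercive pairing that can be tested against the equation. By the definition of the space (Section~\ref{sec:Function spaces}) it is enough to bound $\sup_{h>0}h^{-1}\,\mathbb{E}\int_I\norm{V(\varepsilon u(t+h))-V(\varepsilon u(t))}_{L^2_x}^2\dt$ uniformly in $h$; the base norm $\norm{V(\varepsilon u)}_{L^2_\omega L^2_t L^2_x}$ is controlled by $\mathbb{E}\sup_{t\in I}\mathcal{J}(u(t))$ since $\mathcal{J}(u)\eqsim\norm{V(\varepsilon u)}_{L^2_x}^2$. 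Writing $\delta_h g(t):=g(t+h)-g(t)$ and $\langle\cdot,\cdot\rangle$ for the $L^2_x$ inner product, the equivalence reduces the problem to estimating $\mathbb{E}\int_I\langle\delta_h S(\varepsilon u)(t),\varepsilon\delta_h u(t)\rangle\dt$.

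First I would integrate by parts in space. As $S(\varepsilon u)$ is symmetric, $\delta_h u$ vanishes on $\partial\mathcal{O}$ and $\delta_h u$ is divergence free, this pairing equals $-\langle\delta_h F(t),\delta_h u(t)\rangle$ with $F:=\Pi_{\Div}\Div S(\varepsilon u)\in L^2_\omega L^2_t L^2_x$ (finite by Theorem~\ref{ch:Stokes thm:StrongEnergy}). Inserting the strong form $\delta_h u(t)=\int_t^{t+h}F(s)\ds+M_{t,h}$, where $M_{t,h}:=\int_t^{t+h}\Pi_{\Div}G(u)\dd W$, splits the estimate into a deterministic and a stochastic part. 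The deterministic part $-\mathbb{E}\int_I\langle\delta_h F(t),\int_t^{t+h}F\ds\rangle\dt$ is routine: Cauchy--Schwarz in time and Fubini bound it by $\lesssim h\,\mathbb{E}\int_I\norm{\Pi_{\Div}\Div S(\varepsilon u)}_{L^2_x}^2\dt$, which is the first term on the right-hand side with the correct scaling.

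The stochastic part is the crux, since the crude pairing of $\delta_h F$ against $M_{t,h}$ only gives $O(h^{1/2})$ because $\norm{M_{t,h}}_{L^2_\omega L^2_x}\sim h^{1/2}$; the missing half-power must come from the martingale structure. As $F(t)$ is $\mathcal{F}_t$-measurable and $\mathbb{E}[M_{t,h}\mid\mathcal{F}_t]=0$, the contribution $\mathbb{E}\langle F(t),M_{t,h}\rangle$ vanishes, so only $-\mathbb{E}\int_I\langle F(t+h),M_{t,h}\rangle\dt$ remains. Undoing the integration by parts (using again that $M_{t,h}$ is divergence free) and re-adding the vanishing $S(\varepsilon u(t))$ term rewrites this as $\mathbb{E}\int_I\langle\delta_h S(\varepsilon u)(t),\varepsilon M_{t,h}\rangle\dt$. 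Now Young's inequality together with $\abs{\delta_h S(\varepsilon u)}\lesssim(\kappa+\abs{\varepsilon u(t+h)}+\abs{\varepsilon u(t)})^{(p-2)/2}\abs{\delta_h V(\varepsilon u)}$ produces a term $\eta\,\mathbb{E}\int_I\int_{\mathcal{O}}\abs{\delta_h V(\varepsilon u)}^2\dx\dt$, absorbed into the left-hand side for small $\eta$, plus a remainder $C_\eta\,\mathbb{E}\int_I\int_{\mathcal{O}}(\kappa+\abs{\varepsilon u(t+h)}+\abs{\varepsilon u(t)})^{p-2}\abs{\varepsilon M_{t,h}}^2\dx\dt$. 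For $p\geq2$ I would control the remainder by Hölder in space with exponents $p/(p-2)$ and $p/2$: the weight $(\kappa+\abs{\varepsilon u})^{p-2}$ is bounded in $L^{p/(p-2)}_x$ by the energy $\mathcal{J}(u)$, while $\norm{\varepsilon M_{t,h}}_{L^p_x}^2$ is bounded via the Burkholder--Davis--Gundy inequality in the $2$-smooth space $L^p_x$ and Assumption~\ref{ch:Stokes ass:NoiseCoeffstrong}, giving altogether $\lesssim h\,(\mathbb{E}\sup_{t\in I}\mathcal{J}(u(t))+1)$.

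The hard part is this stochastic term. Conceptually, everything hinges on exploiting adaptedness to discard the $\mathcal{F}_t$-measurable part before estimating, and then on the Young splitting that both absorbs the nonlinear increment $\delta_h V(\varepsilon u)$ and isolates a weight--noise pairing scaling linearly in $h$. Technically, the delicate points are to justify Fubini and the conditional-expectation cancellation under the available moments, and, for $p>2$, to extract enough spatial integrability of the stochastic increment $\varepsilon M_{t,h}$ from the noise assumption so that it can be paired through Hölder against the nonlinear weight $(\kappa+\abs{\varepsilon u})^{p-2}$ while preserving the linear-in-$h$ behaviour that the Nikolskii regularity $B^{1/2}_{2,\infty}$ requires.
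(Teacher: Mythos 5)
Your reduction via the $V$-coercivity, the integration by parts against $\Pi_{\Div}\Div S(\varepsilon u)$, the insertion of the strong formulation, the Young splitting that absorbs $\delta\,\norm{\tau_h V(\varepsilon u)}_{L^2_x}^2$, and the H\"older step with exponents $p/(p-2)$ and $p/2$ all match the paper's proof. The deterministic part is handled the same way. But there is a genuine gap in how you set up the quantity to be estimated and, consequently, in how you treat the stochastic term. The norm $\norm{V(\varepsilon u)}_{L^2_\omega B^{1/2}_{2,\infty}L^2_x}^2$ is
\begin{align*}
\mathbb{E}\Bigl[\,\sup_{h}\;h^{-1}\int_{I\cap I-\set{h}}\norm{\tau_h V(\varepsilon u)}_{L^2_x}^2\ds\Bigr] + \text{(base norm)},
\end{align*}
i.e.\ the supremum over $h$ sits \emph{inside} the expectation. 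You propose to bound $\sup_{h}h^{-1}\,\mathbb{E}\int_I\norm{\tau_h V(\varepsilon u)}_{L^2_x}^2\dt$, which is only $\sup_h\mathbb{E}[\cdots]\le\mathbb{E}[\sup_h\cdots]$ and does not imply membership in $L^2_\omega B^{1/2}_{2,\infty}L^2_x$. This is not a cosmetic issue: the two tools you lean on for the stochastic term --- the martingale cancellation $\mathbb{E}\langle F(t),M_{t,h}\rangle=0$ and the fixed-$(t,h)$ Burkholder--Davis--Gundy bound $\mathbb{E}\norm{\varepsilon M_{t,h}}_{L^p_x}^p\lesssim h^{p/2}(\cdots)$ --- both live under the expectation for a \emph{fixed} $h$ and cannot be upgraded to a pathwise supremum over $h$. (Compare with Brownian motion itself: $\sup_h h^{-1}\mathbb{E}\abs{W(t+h)-W(t)}^2$ is trivially finite, while the pathwise statement $W\in B^{1/2}_{\Phi_2,\infty}$ is the nontrivial Hyt\"onen--Veraar result.)

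The missing ingredient is precisely what the paper supplies in Lemma~\ref{ch:Stokes lem:GradientStabilityStoch}: the \emph{pathwise} Nikolskii regularity $\varepsilon\Pi_{\Div}\mathcal{I}(G(u))\in L^p_\omega B^{1/2}_{p,\infty}L^p_x$, obtained from the Ondrej\'at--Veraar stability theorem (Theorem~\ref{app:StochInt thm:Stability}) for stochastic integrals in the $2$-smooth space $L^p_x$. With that, the paper keeps $\sup_h$ inside $\mathbb{E}$ throughout, bounds the deterministic contributions pathwise by $\int_I\norm{\Pi_{\Div}\Div S(\varepsilon u)}_{L^2_x}^2\dt$ via Fubini, and identifies the remaining stochastic term as $\mathbb{E}\bigseminorm{\varepsilon\Pi_{\Div}\mathcal{I}(G(u))}_{B^{1/2}_{p,\infty}L^p_x}^p$; no conditional-expectation cancellation is used or needed. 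To repair your argument, replace the fixed-$h$ BDG step by this Besov estimate for the stochastic integral and carry the supremum over $h$ inside the expectation from the start.
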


\subsection{Outline}
In Section~\ref{sec:math_setup} we introduce the mathematical framework. In particular, we discuss function spaces, the Helmholtz decomposition, the Bogovskii operator, potentials, mapping properties of stochastic integrals and gradient flows. 

Section~\ref{sec:RegularityWeak} deals with the construction of weak solutions. Moreover, regularity properties of the pressure are discussed.

Strong solutions are constructed in Section~\ref{sec:ExistenceStrong}.

In Section~\ref{sec:RegularityStrong} we present the temporal regularity of the velocity and the non-linear gradient. 

More details on Besov norms are presented in the Appendix~\ref{app:Besov}.

\section{Mathematical setup} \label{sec:math_setup}
Let $\mathcal{O} \subset \mathbb{R}^n$, $n \in \mathbb{N}$, be a bounded domain (further assumptions on $\mathcal{O}$ will be needed for the stability of the Helmholtz-Leray projection and the Bogovskii operator). For some given $T>0$ we denote by $I := (0,T)$ the time interval and write $\mathcal{O}_T := I \times \mathcal{O}$ for the time space cylinder. Moreover let $\left(\Omega,\mathcal{F}, (\mathcal{F}_t)_{t\in I}, \mathbb{P} \right)$ denote a stochastic basis, i.e., a probability space with a complete and right continuous filtration $(\mathcal{F}_t)_{t\in I}$. We write $f \lesssim g$ for two non-negative quantities $f$ and $g$ if $f$ is bounded by $g$ up to a multiplicative constant. Accordingly we define $\gtrsim$ and $\eqsim$. We denote by $c$ a generic constant which can change its value from line to line. Inner products and duality pairings are denoted by $\left( \cdot, \cdot \right)$ and $\langle \cdot, \cdot \rangle$, respectively.

\subsection{Function spaces} \label{sec:Function spaces}
As usual, for $1\leq q < \infty$ we denote by $L^q(\mathcal{O})$ the Lebesgue space and $W^{1,q}(\mathcal{O})$ the Sobolev space. Moreover, $W^{1,q}_0(\mathcal{O})$ denotes the Sobolev spaces with zero boundary values. It is the closure of $C^\infty_0(\mathcal{O})$ (smooth functions with compact support) in the $W^{1,q}(\mathcal{O})$-norm. We denote by $W^{-1,q'}(\mathcal{O})$ the dual of $W^{1,q}_0(\mathcal{O})$. The space of mean-value free Lebesgue functions is denoted by $L^q_0(\mathcal{O})$. The space of smooth, compactly supported and divergence-free vector fields is called $C^\infty_{0,\Div}(\mathcal{O})$ and its closure within the $W^{1,p}$-norm is abbreviated by $W^{1,p}_{0,\Div}(\mathcal{O})$. We do not distinguish in the notation between scalar-, vector- and matrix-valued functions.

For a Banach space $\left(X, \norm{\cdot}_X \right)$ let $L^q(I;X)$ be the Bochner space of Bochner-measurable functions $u: I \to X$ satisfying $t \mapsto \norm{u(t)}_X \in L^q(I)$. Moreover, $C^0(\overline{I};X)$ is the space of continuous functions with respect to the norm-topology. We also use $C^{0,\alpha}(\overline{I};X)$ for the space of $\alpha$-H\"{o}lder continuous functions. Given an Orlicz-function $\Phi: [0,\infty] \to [0,\infty]$, i.e. a convex function satisfying $ \lim_{t \to 0} \Phi(t)/t = 0$ and $\lim_{t \to \infty} \Phi(t)/t = \infty$ we define the Luxemburg-norm 
\begin{align*}
\norm{u}_{L^\Phi(I;X)} := \inf \left\{ \lambda > 0 : \int_I \Phi \left( \frac{\norm{u}_X}{\lambda} \right) \ds \leq 1 \right\}.
\end{align*}
The Orlicz space $L^\Phi(I;X)$ is the space of all Bochner-measurable functions with finite Luxemburg-norm. For more details on Orlicz-spaces we refer to \cite{DiHaHaRu}. Given $h \in I$ and $u :I \to X$ we define the difference operator $\tau_h: \set{u: I \to X} \to \set{u: I\cap I - \set{h} \to X} $ via $\tau_h(u) (s) := u(s+h) - u(s)$. The Besov-Orlicz space $B^\alpha_{\Phi,r}(I;X)$ with differentiability $\alpha \in (0,1)$, integrability $\Phi$ and fine index $r \in [1,\infty]$ is defined as the space of Bochner-measurable functions with finite Besov-Orlicz norm $\norm{\cdot}_{B^\alpha_{\Phi,r}(I;X)}$, where
\begin{align} \label{def:Besov01}
\begin{aligned}
\norm{u}_{B^\alpha_{\Phi,r}(I;X)} &:= \norm{u}_{L^{\Phi}(I ;X)} + \seminorm{u}_{B^{\alpha}_{\Phi,r}(I;X)}, \\
\seminorm{u}_{B^\alpha_{\Phi,r}(I;X)} &:= \begin{cases} \left( \int_{I} h^{-r\alpha} \norm{\tau_h u}_{L^\Phi(I\cap I - \set{h};X)}^r \frac{\dd h}{h} \right)^\frac{1}{r} & \text{ if } r \in [1,\infty), \\
\esssup_{h \in I} h^{-\alpha} \norm{\tau_h u}_{L^\Phi(I\cap I - \set{h};X)} & \text{ if } r = \infty.
\end{cases} 
\end{aligned}
\end{align} 
The case $r = \infty$ is commonly called Nikolskii-Orlicz space and abbreviated by $N^{\alpha,\Phi} = B^{\alpha}_{\Phi,\infty}$. When $\Phi(t) = t^p$ we write $B^\alpha_{p,r}(I;X)$ and call it Besov space. If $X$ is reflexive, we denote by $B^{-\alpha}_{p',q'}(I;X') = \big( B^{\alpha}_{p,q}(I;X) \big)'$.

Similarly, given a Banach space $\left(Y, \norm{\cdot}_Y \right)$, we define $L^q(\Omega;Y)$ as the Bochner space of Bochner-measurable functions $u: \Omega \to Y$ satisfying $\omega \mapsto \norm{u(\omega)}_Y \in L^q(\Omega)$. The space $L^q_{\mathcal{F}}(\Omega \times I;X)$ denotes the subspace of $X$-valued progressively measurable processes. We abbreviate the notation $L^q_\omega L^q_t L^q_x := L^q(\Omega;L^q(I;L^q(\mathcal{O}))) $ and $L^{q-} := \bigcap_{r< q} L^r$.

\subsection{Helmholtz-decomposition} \label{sec:Helmholtz}
The Helmholtz decomposition is a powerful tool in the analysis of fluids. It allows for a complete decoupling of the governing equations of the velocity field and the pressure. For a recent survey we refer to~\cite{bhatia2012helmholtz}. More details can be found e.g. in~\cite[Chapter~2 Section~3]{MR1855030}.

The general idea is to decompose a vector field into a divergence-free vector field and a gradient of a potential. Let $v \in L^2_x$ and $G_v$ be a solution to
\begin{align} \label{app:FuncSpace def:WeakLaplace}
\forall \xi \in W^{1,2}_x: \quad \left( \nabla G_v, \nabla \xi  \right) = \left(v, \nabla \xi \right).
\end{align}
Note that $G_v$ is defined up to a constant. We will prescribe the constant by the condition $\mean{G_v}_{\mathcal{O}} = 0.$

 Now, we define the Helmholtz-Leray projection~$\Pi_{\Div}$ and its orthogonal complement~$\Pi_{\Div}^\perp$ by
\begin{subequations}\label{app:FuncSpace def:Helmholtz}
\begin{align} \label{app:FuncSpace def:HelmholtzDiv}
\Pi_{\Div} v &:= v - \nabla G_v, \\ \label{app:FuncSpace def:HelmholtzGrad}
\Pi_{\Div}^\perp v&:= \nabla G_v.
\end{align}
\end{subequations}

In order to understand the range of the operator $\Pi_{\Div}$ and $\Pi_{\Div}^\perp$ we introduce the spaces
\begin{subequations}
\begin{align}
L^2_{\Div} &:= \set{v \in L^2_x | \, \Div v = 0 \text{ in } \mathcal{O}\, \wedge v\cdot \eta = 0 \text{ on } \partial \mathcal{O}}, \\
\big( L^2_{\Div} \big)^\perp &:= \set{v \in L^2_x | \, \exists G \text{ such that }  v = \nabla G \text{ in } \mathcal{O}}.
\end{align}
\end{subequations}

The next lemma can be found e.g. in~\cite[Theorem~1.4]{MR603444}. 
\begin{lemma}[Helmholtz decomposition] \label{app:FuncSpace lem:Helmholtz}
$L^2_x$ splits into the direct sum of the closed subspaces $L^2_{\Div}$ and $(L^2_{\Div})^\perp$, i.e.,
\begin{align}
L^2_x = L^2_{\Div} \oplus (L^2_{\Div})^{\perp}.
\end{align}
Moreover, $\Pi_{\Div}$ and $\Pi_{\Div}^\perp$ are the $L^2$-orthogonal projections onto $L^2_{\Div}$ and $(L^2_{\Div})^\perp$, respectively.
\end{lemma}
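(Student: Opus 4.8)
The plan is to read the statement as an instance of the Hilbert-space projection theorem, with the solvability of the weak Laplace problem~\eqref{app:FuncSpace def:WeakLaplace} supplying the explicit formulas for the projections. The key object is the space of gradients $\mathcal{G} := \set{\nabla G : G \in W^{1,2}_x, \mean{G}_{\mathcal{O}} = 0}$, which by definition coincides with $(L^2_{\Div})^\perp$. First I would show that $\mathcal{G}$ is a \emph{closed} subspace of $L^2_x$. Since $\mathcal{O}$ is bounded, the Poincaré--Wirtinger inequality makes $\norm{\nabla\cdot}_{L^2_x}$ an equivalent norm on the quotient $W^{1,2}_x/\mathbb{R}$; hence $G \mapsto \nabla G$ is an isometric isomorphism from the complete space $\big(W^{1,2}_x/\mathbb{R}, \norm{\nabla\cdot}_{L^2_x}\big)$ onto $\mathcal{G}$, so $\mathcal{G}$ inherits completeness and is therefore closed in $L^2_x$.

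Next I would identify $L^2_{\Div}$ with the orthogonal complement $\mathcal{G}^\perp$ in $L^2_x$. For the inclusion $L^2_{\Div} \subseteq \mathcal{G}^\perp$, integration by parts for a field with $\Div v = 0$ and vanishing normal trace $v\cdot\eta = 0$ yields $(v, \nabla\xi) = 0$ for every $\xi \in W^{1,2}_x$. Conversely, if $(w, \nabla\xi) = 0$ for all such $\xi$, then testing with $\xi \in C^\infty_0(\mathcal{O})$ gives $\Div w = 0$ distributionally, and the full test class then forces the normal trace $w\cdot\eta$ (well defined in $H^{-1/2}(\partial\mathcal{O})$ once $\Div w \in L^2_x$) to vanish. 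With $\mathcal{G}$ closed and $L^2_{\Div} = \mathcal{G}^\perp$, the projection theorem gives the orthogonal splitting $L^2_x = \mathcal{G}^\perp \oplus \mathcal{G} = L^2_{\Div} \oplus (L^2_{\Div})^\perp$; directness is also immediate from $L^2_{\Div} \cap (L^2_{\Div})^\perp = \set{0}$, since $u = \nabla G \in L^2_{\Div}$ forces $\norm{u}_{L^2_x}^2 = (u, \nabla G) = 0$.

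Finally I would verify that the operators in~\eqref{app:FuncSpace def:Helmholtz} realize these projections. The problem~\eqref{app:FuncSpace def:WeakLaplace} admits a unique $\mean{\cdot}_{\mathcal{O}}$-free solution $G_v$ by Lax--Milgram (equivalently Riesz representation, as the bilinear form is the $L^2$ inner product of gradients, with coercivity again from Poincaré). Then $\nabla G_v \in \mathcal{G}$ and, by the equation, $(v - \nabla G_v, \nabla\xi) = 0$ for all $\xi$, i.e. $v - \nabla G_v \perp \mathcal{G}$; this is exactly the variational characterization of the orthogonal projection onto the closed subspace $\mathcal{G}$, so $\Pi_{\Div}^\perp v = \nabla G_v$ projects onto $(L^2_{\Div})^\perp$ and $\Pi_{\Div} v = v - \nabla G_v$ onto $L^2_{\Div}$.

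I expect the main obstacle to be the rigorous identification $\mathcal{G}^\perp = L^2_{\Div}$ with the boundary condition $v\cdot\eta = 0$ interpreted in the correct weak (normal-trace) sense; this is precisely the step that invokes the regularity of $\mathcal{O}$ through the integration-by-parts theory for $H(\Div)$-fields. By contrast, the closedness of $\mathcal{G}$ and the solvability of the weak Laplace problem are both routine consequences of the Poincaré inequality on the bounded domain.
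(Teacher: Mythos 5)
Your argument is correct, but it is worth noting that the paper does not actually prove this lemma: it simply cites \cite[Theorem~1.4]{MR603444}, so your write-up supplies a self-contained proof where the paper outsources to the literature. The route you take --- closedness of the gradient space $\mathcal{G}$ via Poincar\'e--Wirtinger, the identification $L^2_{\Div} = \mathcal{G}^\perp$ via the normal-trace theory for $H(\Div)$-fields, and the variational characterization of the projection through the weak Neumann problem~\eqref{app:FuncSpace def:WeakLaplace} solved by Lax--Milgram --- is the standard direct proof and each step is sound. Two small points deserve explicit mention. First, the paper's definition of $(L^2_{\Div})^\perp$ asks only for \emph{some} $G$ with $v = \nabla G$, whereas your $\mathcal{G}$ requires $G \in W^{1,2}_x$ with $\mean{G}_{\mathcal{O}} = 0$; these coincide on a bounded Lipschitz domain (a distribution whose gradient lies in $L^2_x$ belongs to $W^{1,2}_x$ there, and the mean-zero normalization costs nothing since constants have zero gradient), but this identification is not purely ``by definition'' and should be stated, since the standing assumption in Section~2 is only that $\mathcal{O}$ is a bounded domain. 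Second, as you correctly flag yourself, both directions of $L^2_{\Div} = \mathcal{G}^\perp$ --- the Gauss--Green formula giving $(v,\nabla\xi) = 0$ from $\Div v = 0$ and $v\cdot\eta = 0$, and conversely the conclusion $w\cdot\eta = 0$ in $H^{-1/2}(\partial\mathcal{O})$, which additionally uses surjectivity of the trace $W^{1,2}_x \to H^{1/2}(\partial\mathcal{O})$ --- require at least Lipschitz regularity of $\partial\mathcal{O}$; this is consistent with the paper's own remark that further assumptions on $\mathcal{O}$ are needed for the Helmholtz--Leray theory, and with the $C^2$ hypothesis under which the lemma is actually invoked in the main results.
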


\begin{remark}
Since $\Pi_{\Div}$ and $\Pi_{\Div}^\perp$ are $L^2$-orthogonal projections, an equivalent definition to~\eqref{app:FuncSpace def:Helmholtz} is given by
\begin{align}
\Pi_{\Div} v = \argmin_{w \in L^2_{\Div}} \norm{w-v}_{L^2_x} \quad \text{ and } \quad \Pi_{\Div}^\perp v = \argmin_{w \in (L^2_{\Div})^\perp} \norm{w-v}_{L^2_x}.
\end{align}
\end{remark}

Stability of the Helmholtz-decomposition is particularly important, when it comes to the reconstruction of the pressure. Clearly, as an $L^2$-projection it trivially holds
\begin{align*}
\norm{\Pi_{\Div} v}_{L^2_x} \leq \norm{v}_{L^2_x} \quad \text{ and } \quad \bignorm{\Pi_{\Div}^\perp v}_{L^2_x} \leq \norm{v}_{L^2_x}.
\end{align*}

Higher order stability is closely linked to regularity of solutions to the Laplace equation. Indeed, if $\Div v \in L^2_x$, then~\eqref{app:FuncSpace def:WeakLaplace} is equivalent to the Neumann-Laplacian
\begin{align} \label{app:FuncSpace def:WeakLaplace02}
\begin{cases}
\Delta G_v = \Div v \quad &\text{ in } \mathcal{O},\\
\partial_{\eta} G_v = v \cdot \eta \quad &\text{ on } \partial \mathcal{O},
\end{cases}
\end{align}
i.e., $G_v =  \Delta^{-1} \Div v $ is a weak solution to~\eqref{app:FuncSpace def:WeakLaplace02}.

Therefore, it is not surprising that improved stability properties of the Laplacian transfer to stability of the Helmholtz-Leray projection, cf. \cite[Remark~1.6]{MR603444}.

\begin{theorem} \label{app:FuncSpace thm:GradientStabilityHelm}
Let $\mathcal{O}$ be a $C^{k+1}$-domain and $q\in (1,\infty)$. Then
\begin{align} \label{app:FuncSpace eq:GradientStabilityHelm}
\norm{\Pi_{\Div} v}_{W^{k,q}_x} \lesssim \norm{v}_{W^{k,q}_x} \quad \text{ and } \quad \norm{\Pi_{\Div}^\perp v}_{W^{k,q}_x} \lesssim \norm{\Div v}_{W^{k-1,q}_x}.
\end{align}
\end{theorem}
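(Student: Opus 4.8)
The plan is to reduce both inequalities to the up-to-the-boundary $L^q$-regularity of the Neumann--Laplacian, using the identification recorded in~\eqref{app:FuncSpace def:WeakLaplace02}: whenever $\Div v \in L^q_x$, the mean-value free potential $G_v$ is the weak solution of $\Delta G_v = \Div v$ in $\mathcal{O}$ with $\partial_\eta G_v = v\cdot \eta$ on $\partial\mathcal{O}$. Since $\Pi_{\Div}^\perp v = \nabla G_v$ and $\Pi_{\Div} v = v - \nabla G_v$, everything comes down to controlling $\norm{\nabla G_v}_{W^{k,q}_x}$. The analytic input I would invoke is the Calder\'on--Zygmund estimate for the inhomogeneous Neumann problem on a $C^{k+1}$-domain,
\begin{align*}
\norm{\nabla G}_{W^{k,q}_x} \lesssim \norm{\Delta G}_{W^{k-1,q}_x} + \norm{\partial_\eta G}_{W^{k-1/q,q}(\partial\mathcal{O})} ,
\end{align*}
valid for $q \in (1,\infty)$; I would first establish it for smooth data and then pass to general data by density.

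For the first estimate I would write $\Pi_{\Div} v = v - \nabla G_v$ and bound the gradient term by the displayed estimate with $\Delta G_v = \Div v$ and $\partial_\eta G_v = v\cdot\eta$. The interior datum is immediate, $\norm{\Div v}_{W^{k-1,q}_x} \le \norm{v}_{W^{k,q}_x}$, and the boundary datum is controlled by the trace theorem, $\norm{v\cdot\eta}_{W^{k-1/q,q}(\partial\mathcal{O})} \lesssim \norm{v}_{W^{k,q}_x}$. Here the $C^{k+1}$-regularity of $\mathcal{O}$ is used twice: it makes the outer normal $\eta$ of class $C^{k}(\partial\mathcal{O})$, so that multiplication by $\eta$ is bounded on the trace space, and it is exactly the regularity needed for the elliptic estimate to close up to the boundary. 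The triangle inequality then yields $\norm{\Pi_{\Div} v}_{W^{k,q}_x} \lesssim \norm{v}_{W^{k,q}_x}$.

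For the second estimate I would apply the same elliptic bound directly to $\Pi_{\Div}^\perp v = \nabla G_v$; the content is now the gain of one derivative, passing from the $W^{k-1,q}_x$-norm of the datum $\Div v$ to the $W^{k,q}_x$-norm of $\nabla G_v$. The interior contribution is precisely $\norm{\Div v}_{W^{k-1,q}_x}$, which is the asserted right-hand side, while the accompanying normal-trace contribution must be handled by the boundary analysis together with the structure of the fields to which the estimate is applied; this is the delicate point of the claim.

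I expect the genuine difficulty to lie not in the algebra of the projection but in the up-to-the-boundary elliptic regularity: one flattens the boundary via $C^{k+1}$-charts, freezes the coefficients, and controls the resulting commutators and the perturbed Neumann datum, which is exactly where the sharp dependence on $q \in (1,\infty)$ and on the domain regularity originates, as well as the correct treatment of the normal-trace term in the second estimate. Once this Calder\'on--Zygmund estimate is in hand---equivalently, once one cites the corresponding Laplacian regularity, e.g.~\cite[Remark~1.6]{MR603444}---both inequalities follow from the bookkeeping above.
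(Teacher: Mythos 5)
Your reduction is the same one the paper intends: it proves the theorem by identifying $G_v$ with the weak solution of the Neumann problem~\eqref{app:FuncSpace def:WeakLaplace02} and invoking Calder\'on--Zygmund regularity up to the boundary (the paper merely cites the fundamental solution of the Neumann problem and \cite[Lemma~3.6 \& Lemma~3.8]{MR1950829}). For the first inequality your argument is complete and correct: the interior datum $\Div v$ and the Neumann datum $v\cdot\eta$ are both controlled by $\norm{v}_{W^{k,q}_x}$ via the trace theorem and the $C^{k}$-regularity of $\eta$, and the triangle inequality finishes.

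The point you flag as ``delicate'' in the second inequality is, however, a genuine obstruction rather than a technicality you can defer: the normal-trace contribution $\norm{v\cdot\eta}_{W^{k-1/q,q}(\partial\mathcal{O})}$ cannot be absorbed into $\norm{\Div v}_{W^{k-1,q}_x}$ for arbitrary $v$. Indeed, take $v=\nabla h$ with $h$ harmonic, smooth up to the boundary and nonconstant; then $\Div v=0$ while $G_v=h-\mean{h}_{\mathcal{O}}$, so $\Pi_{\Div}^\perp v=\nabla h=v\neq 0$, and the second estimate as literally stated fails. The inequality is correct exactly when the Neumann datum vanishes, i.e.\ $v\cdot\eta=0$ on $\partial\mathcal{O}$, in which case your elliptic bound has no boundary term and yields precisely $\norm{\nabla G_v}_{W^{k,q}_x}\lesssim\norm{\Div v}_{W^{k-1,q}_x}$. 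This is the situation in every place the paper uses the estimate (e.g.\ $v=\mathcal{B}\xi_0\in C^\infty_{0,x}$ in Lemma~\ref{lem:PressureDet}), so your proof closes once you make that hypothesis explicit; without it, no proof can exist, and your honest hedge is pointing at an imprecision in the statement itself rather than at a fixable gap in your argument.
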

Theorem~\ref{app:FuncSpace thm:GradientStabilityHelm} can be proven using the fundamental solution of the Neumann-problem~\eqref{app:FuncSpace def:WeakLaplace02} and Calderon-Zygmund estimates. For example the result for Bessel-potential spaces is presented in~\cite[Lemma~3.6 \& Lemma~3.8]{MR1950829}.

\begin{corollary} \label{cor:Dual}
Let $\mathcal{O}$ be a bounded $C^2$-domain and $q \in (1,\infty)$. Then 
\begin{enumerate}
\item $\nabla \Pi_{\Div}: W^{1,q}_{0,\Div} \to L^q_x$ is bounded,
\item $\Pi_{\Div} \Div : L^{q'}_x \to \big( W^{1,q}_{0,\Div} \big)'$ is bounded,
\item for all $u \in W^{1,q}_{0,\Div}$ and $S \in L^{q'}_x$ 
\begin{align} \label{eq:dual}
\langle -\Pi_{\Div} \Div S, u \rangle =\int_{\mathcal{O}} S :\nabla \Pi_{\Div} u \dd x.
\end{align}
\end{enumerate}
\end{corollary}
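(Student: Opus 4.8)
The plan is to derive all three parts from the higher-order stability of the Helmholtz--Leray projection recorded in Theorem~\ref{app:FuncSpace thm:GradientStabilityHelm}, together with a density argument on $C^\infty_{0,\Div}$ and the $L^2$-self-adjointness from Lemma~\ref{app:FuncSpace lem:Helmholtz}. For part~(1) I would apply Theorem~\ref{app:FuncSpace thm:GradientStabilityHelm} with $k=1$, which is licensed by the $C^2$-regularity of $\mathcal{O}$: it yields that $\Pi_{\Div}$ extends to a bounded operator $W^{1,q}_x \to W^{1,q}_x$, so that $\nabla\Pi_{\Div}\colon W^{1,q}_x \to L^q_x$ is bounded, and restriction to the closed subspace $W^{1,q}_{0,\Div}$ gives the claim. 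In fact, approximating $u \in W^{1,q}_{0,\Div}$ by fields in $C^\infty_{0,\Div}$ and using that $\Pi_{\Div}$ fixes solenoidal fields shows $\Pi_{\Div} u = u$ there, so $\nabla\Pi_{\Div}$ coincides with $\nabla$ on this space.

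For parts~(2) and~(3) I would identify $-\Pi_{\Div}\Div$ with the Banach-space adjoint of the operator $\nabla\Pi_{\Div}\colon W^{1,q}_{0,\Div}\to L^q_x$ from part~(1). Since $q\in(1,\infty)$ we have $(L^q_x)'\cong L^{q'}_x$, and the adjoint of a bounded operator is bounded, so proving the adjoint identity immediately delivers the mapping property $\Pi_{\Div}\Div\colon L^{q'}_x \to (W^{1,q}_{0,\Div})'$ of part~(2); the formula~\eqref{eq:dual} is precisely the statement that the adjoint is $S\mapsto -\Pi_{\Div}\Div S$. To establish~\eqref{eq:dual} I would first test it on a dense set: for $u\in C^\infty_{0,\Div}$ and smooth $S$ one has $\Pi_{\Div} u = u$, so integration by parts (with no boundary contribution, thanks to the compact support of $u$) gives $\int_\mathcal{O} S:\nabla\Pi_{\Div} u\dx = -\int_\mathcal{O} \Div S\cdot u\dx$; on the other hand the $L^2$-self-adjointness of $\Pi_{\Div}$ together with $\Pi_{\Div} u = u$ yields $\langle-\Pi_{\Div}\Div S,u\rangle = -\langle\Div S,\Pi_{\Div} u\rangle = -\langle\Div S,u\rangle$, and the two expressions agree.

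Finally I would pass to general $S\in L^{q'}_x$ and $u\in W^{1,q}_{0,\Div}$ by density, using the bounds from parts~(1)--(2) to control both sides under approximation. The step I expect to require the most care is the self-adjointness relation across the $L^q$--$L^{q'}$ pairing rather than on $L^2$: it is not purely formal because for $q<2$ the fields need not lie in $L^2_x$. I would secure it by invoking Theorem~\ref{app:FuncSpace thm:GradientStabilityHelm} with $k=0$ to obtain $L^q$- and $L^{q'}$-stability of $\Pi_{\Div}$ and then transporting the $L^2$-symmetry of Lemma~\ref{app:FuncSpace lem:Helmholtz} to the $L^q$--$L^{q'}$ duality through the density of $L^2_x\cap L^q_x$. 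The same density mechanism is what legitimises interpreting the composite $\Pi_{\Div}\Div$ on $L^{q'}_x$---where $\Div S$ is a priori only a distribution---directly through the pairing~\eqref{eq:dual}.
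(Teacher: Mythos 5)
Your proposal is correct and follows essentially the same route as the paper, whose proof is simply the one-line remark that the claims follow from Theorem~\ref{app:FuncSpace thm:GradientStabilityHelm} and density of smooth functions; you have merely spelled out the adjoint identification and the density passage that this one-liner compresses. The extra care you take with the $L^q$--$L^{q'}$ self-adjointness is sound but not strictly needed, since on the dense class of smooth compactly supported fields everything lies in $L^2_x$ and the extension is by the continuity bounds you already established.
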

\begin{proof}
The claims immediately follow by Theorem~\ref{app:FuncSpace thm:GradientStabilityHelm} and the density of smooth functions.
\end{proof}

\subsection{Bogovskii operator}
The right inverse of the divergence is nowadays called Bogovskii operator. It traces back to his work~\cite{MR631691} and is a useful tool for the regularity analysis of the pressure.

\begin{theorem}[{Bogovskii's operator \cite[Theorem~2.5]{MR2240056}}] \label{app:Aux thm:Bogovskii}
Let $q \in (1,\infty) $ and $\mathcal{O} \subset \mathbb{R}^n$ be a bounded domain with a locally Lipschitz boundary. Then there exists $\mathcal{B}: C^\infty_{0,x} \to C^\infty_{0,x}$ such that 
\begin{align}
\Div \mathcal{B} g = g, \quad g \in L^q_x, \quad \mean{g}_{\mathcal{O}} = 0.
\end{align} 
Moreover, $\mathcal{B}$ extends continuously to a bounded operator from $W^{s,q}_{0,x}$ to $W^{s+1,q}_{0,x}$ provided $s > -2 + 1/q$.
\end{theorem}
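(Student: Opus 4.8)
The plan is to reduce the statement to domains that are star-shaped with respect to a ball, where an explicit integral representation of the right inverse is available, and then to upgrade from the basic $L^q$ estimate to the full Sobolev scale by combining Calder\'on--Zygmund theory with a duality argument. Real interpolation fills the intermediate smoothness indices.

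First I would record the reduction to the star-shaped case. Since $\mathcal{O}$ is bounded with locally Lipschitz boundary, it can be covered by finitely many open sets $U_1,\dots,U_M$ such that each $\mathcal{O}\cap U_i$ is star-shaped with respect to an interior ball $B_i$. Fixing a subordinate partition of unity $\sum_i \psi_i \equiv 1$ on $\mathcal{O}$, I would split $g=\sum_i \psi_i g$. The pieces $\psi_i g$ need not be mean-value free on $\mathcal{O}\cap U_i$, so I would balance the fluxes by subtracting fixed mean-zero bump functions supported in the overlaps and solving a small finite linear system for the correction coefficients; this uses only $\mean{g}_{\mathcal{O}}=0$. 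Applying the local operator on each patch and summing then yields a global right inverse of the divergence, and the operator norms on each scale add up. This gluing is routine but bookkeeping-heavy.

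Next, on a domain $\mathcal{O}$ star-shaped with respect to a ball $B$, fix $\omega\in C^\infty_{0}(B)$ with $\int_{\mathcal{O}}\omega\,\dx=1$ and define the Bogovskii operator by the explicit kernel
\begin{align*}
\mathcal{B}g(x) := \int_{\mathcal{O}} g(y)\,\frac{x-y}{\abs{x-y}^{n}}\int_{\abs{x-y}}^{\infty}\omega\Big(y+\xi\,\frac{x-y}{\abs{x-y}}\Big)\,\xi^{n-1}\,\dd\xi\,\dy .
\end{align*}
A direct differentiation under the integral gives $\Div\mathcal{B}g = g - \omega\int_{\mathcal{O}} g\,\dx$, so that $\Div\mathcal{B}g=g$ whenever $\mean{g}_{\mathcal{O}}=0$; the same computation shows $\mathcal{B}$ maps $C^\infty_{0,x}$ into $C^\infty_{0,x}$. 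Differentiating once more, each entry $\partial_k\mathcal{B}_j g(x)$ decomposes as a principal-value singular integral $\mathrm{p.v.}\int K_{jk}(x,x-y)g(y)\,\dy$, whose kernel is homogeneous of degree $-n$ in the second slot and has vanishing mean on spheres, plus a weakly singular remainder produced when derivatives fall on $\omega$. The singular part is a genuine Calder\'on--Zygmund operator and the remainder is bounded with an integrable kernel, so $\nabla\mathcal{B}:L^q_x\to L^q_x$ for every $q\in(1,\infty)$. This establishes the case $s=0$, namely $\mathcal{B}:L^q_x\to W^{1,q}_{0,x}$.

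Finally I would climb the Sobolev scale. For a positive integer $s=k$ one differentiates the kernel $k$ further times; each derivative either falls on the Calder\'on--Zygmund kernel, keeping it within the calculus, or on $\omega$, producing a smoother term, whence $\mathcal{B}:W^{k,q}_{0,x}\to W^{k+1,q}_{0,x}$. For negative orders I would pass to the formal adjoint $\mathcal{B}^\ast$, which has the same structural form: its mapping property on the positive scale dualizes to the sought bound on the negative scale, and the intermediate smoothness indices are recovered by real interpolation on the Besov scale. The sharp threshold $s>-2+1/q$ is equivalent to $s+1>-1/q'$, i.e.\ the target smoothness lies strictly above the critical value at which zero boundary values cease to be meaningful and the duality characterization of $W^{s+1,q}_{0,x}$ degenerates; pinning down this endpoint cleanly, together with the flux-balancing in the gluing step, is the main technical obstacle.
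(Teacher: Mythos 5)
The paper does not prove this statement; it is imported verbatim from Gei\ss ert--Heck--Hieber \cite[Theorem~2.5]{MR2240056}, so there is no in-paper argument to compare against. Your sketch reconstructs essentially the proof strategy of that reference: the explicit Bogovskii kernel on domains star-shaped with respect to a ball, the identity $\Div\mathcal{B}g=g-\omega\int g\,\dx$, Calder\'on--Zygmund bounds for $\nabla\mathcal{B}$ giving the case $s=0$, localization via a partition of unity with flux correction for general Lipschitz domains, and duality plus interpolation for the negative and fractional range, with the threshold $s>-2+1/q$ correctly traced to $s+1>-1/q'$, i.e.\ to the regime where $W^{-s-1,q'}_{0,x}=W^{-s-1,q'}_x$ and the dual characterization of the target space is available. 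The outline is sound.

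One step is thinner than you acknowledge: the duality argument for $s<0$ does not follow merely because ``the adjoint has the same structural form.'' The adjoint kernel is $N(y,x)$ rather than $N(x,y)$, and in the Bogovskii kernel the base point of the bump function $\omega$ and the direction of the ray are tied to the $y$-variable, so differentiating the adjoint in its new active variable produces a genuinely different family of singular integrals whose Calder\'on--Zygmund character (homogeneity of degree $-n$, cancellation on spheres, H\"ormander condition) must be verified separately; this is where most of the work in the cited paper sits. Relatedly, for $s<0$ the localization step needs care: multiplication by cutoffs and the mean-zero correction must be interpreted in a negative-order space, and the pairing with the correction bumps is only controlled in the stated range of $s$. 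Neither point breaks the argument, but both go beyond ``routine bookkeeping.''
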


\subsection{Monotonicity and potentials}
A continuous, convex and strictly increasing function
$\phi\,:\, [0,\infty) \to [0,\infty)$ satisfying
\begin{align*}
  \lim_{t\rightarrow0}\frac{\phi(t)}{t}=
  \lim_{t\rightarrow\infty}\frac{t}{\phi(t)}=0
\end{align*}
is called an $N$-function.

We say that $\phi$ satisfies the $\Delta_2$--condition, if there exists $c > 0$ such that for all $t \geq 0$ holds $\phi(2t) \leq c\, \phi(t)$. By $\Delta_2(\phi)$ we denote the smallest such constant. Since $\phi(t) \leq \phi(2t)$ the $\Delta_2$-condition is equivalent to $\phi(2t) \eqsim \phi(t)$ uniformly in $t$. Note that if $\Delta_2(\phi) < \infty$ then $\phi(t) \eqsim \phi(c\,t)$ uniformly in $t\geq 0$ for any fixed $c>0$. For a family $\phi_\lambda$ of $N$-functions we define
$\Delta_2(\set{\phi_\lambda}) := \sup_\lambda \Delta_2(\phi_\lambda)$.

By $\phi^*$ we denote the conjugate N-function of $\phi$, which is
given by $\phi^*(t) = \sup_{s \geq 0} (st - \phi(s))$. Then $\phi^{**}
= \phi$.

More details can be found e.g. in~\cite{DieE08,DiHaHaRu,DR,BelDieKre2012,DieForTomWan20}.

\begin{definition}
  \label{app:FuncSpace ass:phipp}
  Let $\phi$ be an N-function. 
  We say that $\phi$ is \emph{uniformly convex}, if
  $\phi$ is $C^1$ on $[0,\infty)$ and $C^2$ on $(0,\infty)$ and
  assume that 
  \begin{align}
    \label{app:FuncSpace eq:phipp}
    \phi'(t) &\eqsim t\,\phi''(t)
  \end{align}
  uniformly in $t > 0$. The constants hidden in $\eqsim$ are called the
  \emph{characteristics of~$\phi$}.
\end{definition}
Note that~\eqref{app:FuncSpace eq:phipp} is stronger than
$\Delta_2(\phi,\phi^*)<\infty$. In fact, the $\Delta_2$-constants can
be estimated in terms of the characteristics of~$\phi$.

Associated to an uniformly convex $N$-function $\phi$ we define the tensors
\begin{align} \label{app:FuncSpace def:SandV}
   S( \xi)&:=\frac{\phi'(\abs{ \xi})}{\abs{ \xi}} \xi \quad \text{ and } \quad 
   V( \xi):=\sqrt{\frac{\phi'(\abs{ \xi})}{\abs{ \xi}}}\xi  \in \mathbb{R}^{n \times n}.
\end{align}

Sometimes it is convenient to change the growth of an $N$-function near $0$. One possibility is to introduce the shifted $N$-function $\phi_\kappa$ for $\kappa\geq 0$ by
\begin{align}
  \label{app:FuncSpace eq:def_shift}
  \varphi_{\kappa}(t) &:= \int_0^t \frac{\phi'(\kappa+s)}{\kappa+s} s \ds.
\end{align}
We define $S_\kappa$ and $V_\kappa$ analogously to~\eqref{app:FuncSpace def:SandV}. However, we neglect in the notation the dependence of $S_\kappa$ and $V_\kappa$ on $\kappa$, since the characteristic of $\varphi_{\kappa}$ is uniform in~$\kappa \geq 0$.
\begin{lemma}[{\cite[Lemma~27]{DieE08}}]
Let $\phi$ be an N-function with $\Delta_2(\phi, \phi^*) < \infty$. Then the family $\set{\varphi_\kappa}_{\kappa \geq 0}$ satisfies $\sup_{\kappa \geq 0} \Delta_2\big( \varphi_\kappa, (\varphi_\kappa)^* \big) < \infty$.
\end{lemma}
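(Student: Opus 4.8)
The plan is to reduce the whole family $\set{\varphi_\kappa}$ to the explicit comparison functions $F_\kappa(t) := \phi(\kappa+t)\,t^2/(\kappa+t)^2$ and to read off the uniform $\Delta_2$- and $\nabla_2$-constants from the elasticity of $F_\kappa$, which I can compute by hand and squeeze into a fixed compact subinterval of $(1,\infty)$. First I would restate the hypothesis in terms of the elasticity $e_\phi(t) := t\phi'(t)/\phi(t)$: the condition $\Delta_2(\phi,\phi^*) < \infty$ is equivalent to two-sided pointwise bounds $1 < p_0 \le e_\phi(t) \le p_1 < \infty$, valid after replacing $\phi$ by an equivalent N-function (which alters all $\Delta_2$-constants only by a fixed factor). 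The consequences I would invoke are the standard ones $\phi'(t) \eqsim \phi(t)/t$, and that $t \mapsto \phi(t) t^{-p_0}$ is increasing while $t \mapsto \phi(t) t^{-p_1}$ is decreasing.

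The core computation is the elasticity of $F_\kappa$. A direct logarithmic differentiation gives $e_{F_\kappa}(t) = \tfrac{t}{\kappa+t}\bigl(e_\phi(\kappa+t) - 2\bigr) + 2$. Since $e_\phi(\kappa+t) - 2 \in [p_0 - 2, p_1 - 2]$ and the weight $\tfrac{t}{\kappa+t} \in (0,1)$, I conclude $e_{F_\kappa}(t) \in [\min(p_0,2), \max(p_1,2)] \subset (1,\infty)$ for all $t > 0$ and all $\kappa \geq 0$, with endpoints independent of $\kappa$. Exact index bounds $p_0' \le e_{F_\kappa} \le p_1'$ with $p_0' > 1$ immediately yield $\Delta_2(F_\kappa)$ and $\nabla_2(F_\kappa) = \Delta_2\bigl((F_\kappa)^*\bigr)$ with constants depending only on $p_0', p_1'$, hence uniform in $\kappa$. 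It then remains to transfer these bounds to $\varphi_\kappa$. For this I would use $\varphi_\kappa'(s) = \tfrac{\phi'(\kappa+s)}{\kappa+s}s \eqsim \tfrac{\phi(\kappa+s)}{(\kappa+s)^2}s = F_\kappa(s)/s$ uniformly in $\kappa$, and integrate: the lower index $e_{F_\kappa} \ge p_0' > 0$ licenses the integral comparison $\int_0^t F_\kappa(s)\, s^{-1}\,\ds \eqsim F_\kappa(t)$, so that $\varphi_\kappa(t) = \int_0^t \varphi_\kappa'(s)\,\ds \eqsim F_\kappa(t)$ uniformly in $\kappa$. Since $\Delta_2$-constants and conjugation are stable under equivalence of N-functions, $\sup_{\kappa \geq 0}\Delta_2\bigl(\varphi_\kappa, (\varphi_\kappa)^*\bigr) < \infty$ follows.

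The main obstacle is the $\nabla_2$-direction, i.e.\ keeping the \emph{lower} index of $\varphi_\kappa$ strictly above $1$ uniformly in $\kappa$; the upper ($\Delta_2$) bound is comparatively soft and could alternatively be obtained by the rescaling $s \mapsto 2s$ together with $\phi'(2u) \eqsim \phi'(u)$. The conceptual reason the lower bound survives is encoded in the weight $\tfrac{t}{\kappa+t} \in (0,1)$ (equivalently $\tfrac{\kappa+s_1}{\kappa+s_2} \ge \tfrac{s_1}{s_2}$ for $s_1 \le s_2$): for $t \lesssim \kappa$ the shift forces $\varphi_\kappa$ to behave quadratically, $\varphi_\kappa(t) \eqsim \tfrac{\phi(\kappa)}{\kappa^2}t^2$, whose index is $2$, while for $t \gtrsim \kappa$ it behaves like $\phi$ with index in $[p_0,p_1]$; the shifted elasticity therefore only ever interpolates into $[\min(p_0,2), \max(p_1,2)]$ and never approaches the dangerous value $1$. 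The one technical point I would be careful about is the reduction to an equivalent N-function with sharp pointwise indices and the verification that shifting respects this equivalence uniformly in $\kappa$, so that no $\kappa$-dependence sneaks in through the comparison constants.
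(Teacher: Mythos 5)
The paper does not actually prove this lemma --- it is quoted with a citation to Diening--Ettwein \cite{DieE08}, so there is no internal proof to compare against. Your argument is correct and is essentially the standard proof of the cited result: the uniform equivalence $\varphi_\kappa(t)\eqsim \phi(\kappa+t)\,t^2/(\kappa+t)^2$ together with the observation that the elasticity of the right-hand side is the convex combination $\tfrac{t}{\kappa+t}\,e_\phi(\kappa+t)+\bigl(1-\tfrac{t}{\kappa+t}\bigr)\cdot 2$, hence lies in $[\min(p_0,2),\max(p_1,2)]\subset(1,\infty)$ uniformly in $\kappa$, is exactly the mechanism behind the uniform bound on $\Delta_2\bigl(\varphi_\kappa,(\varphi_\kappa)^*\bigr)$; your identity $e_{F_\kappa}(t)=\tfrac{t}{\kappa+t}\bigl(e_\phi(\kappa+t)-2\bigr)+2$ checks out, as does the integral comparison $\int_0^t F_\kappa(s)s^{-1}\,\mathrm{d}s\eqsim F_\kappa(t)$. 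The one point that deserves an explicit line is the reduction you flag yourself: $\Delta_2(\phi,\phi^*)<\infty$ only gives \emph{almost}-monotonicity of $\phi(t)t^{-p_0}$ and $\phi(t)t^{-p_1}$, so either you carry the almost-monotonicity constants through the argument (which works, since every step is an inequality up to fixed constants), or you regularize to an equivalent $\tilde\phi$ with pointwise indices and verify $\tilde\phi'(u)\eqsim\phi'(u)$ so that the shifted families are $\kappa$-uniformly equivalent; for the concrete potential used in this paper, $e_\phi\equiv p$, this step is vacuous.
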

\begin{lemma}[{Equivalence lemma \cite[Lemma~3]{DieE08}}]
  \label{app:FuncSpace lem:hammer}
  We have
  \begin{align*}
    \begin{aligned} 
      \big({S}(P) - {S}(Q)\big) : \big(P-Q\big) &\eqsim \bigabs{V(P)-V(Q)}^2\\
      &\eqsim \phi_{\abs{P}}(\abs{P-Q}) \\
 &\eqsim \phi''\big(\abs{P}+\abs{Q} \big)\abs{ P -  Q}^2
    \end{aligned}
  \end{align*}
  uniformly in $ P,  Q \in \setR^{n \times n}$.  Moreover,
  uniformly in $ Q \in \setR^{n \times n}$,
  \begin{align*}
     S(Q) : Q &= \abs{V(Q)}^2\eqsim \phi(\abs{ Q})\\
    \abs{{S}(P) - {S}(Q)}&\eqsim \big(\phi_{\abs{P}}\big)'(\abs{P-Q}).
  \end{align*}
  The constants depend only on the characteristics of $\phi$.
\end{lemma}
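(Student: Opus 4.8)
The final statement is the Equivalence Lemma itself, so the task is to prove the three chains of pointwise algebraic equivalences for an arbitrary uniformly convex $N$-function $\phi$, with all constants depending only on the characteristics of $\phi$ (i.e. the constants hidden in $\phi'(t) \eqsim t\phi''(t)$). The whole proof is local at each pair $P, Q \in \mathbb{R}^{n\times n}$ and rests on reducing everything to scalar integral expressions involving $\phi''$. First I would record the single structural fact that drives the argument: uniform convexity~\eqref{app:FuncSpace eq:phipp} means $\phi''$ is (up to constants) \emph{almost monotone} and, more importantly, that $\phi''$ is \emph{quasi-doubling} along intervals, so that $\phi''(a) \eqsim \phi''(b)$ whenever $a \eqsim b$. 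This self-improving property is what lets one freely replace an argument like $\abs{P}+\abs{Q}$ by $\abs{P}$ or by intermediate values on a segment.

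\textbf{Key steps.} The engine of the proof is the elementary representation, valid for any $C^1$ vector field $S(\xi) = \frac{\phi'(\abs\xi)}{\abs\xi}\xi$,
\begin{align*}
\big(S(P) - S(Q)\big) : (P-Q) = \int_0^1 DS\big(Q + \theta(P-Q)\big)(P-Q) : (P-Q) \dd\theta,
\end{align*}
together with the matching identity $V(P) - V(Q) = \int_0^1 DV(Q+\theta(P-Q))(P-Q)\dd\theta$. A direct computation of $DS(\xi)$ and $DV(\xi)$ shows their eigenvalues are comparable to $\phi''(\abs\xi)$ and $\frac{\phi'(\abs\xi)}{\abs\xi}$, and uniform convexity forces these two quantities to be equivalent; hence $DS(\xi)A:A \eqsim \phi''(\abs\xi)\abs{A}^2$ and $\abs{DV(\xi)A}^2 \eqsim \phi''(\abs\xi)\abs{A}^2$ uniformly in $\xi \neq 0$. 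Inserting these into the integral representations reduces \emph{all} the displayed equivalences to the single scalar claim
\begin{align*}
\int_0^1 \phi''\big(\abs{Q+\theta(P-Q)}\big)\dd\theta \, \abs{P-Q}^2 \eqsim \phi''\big(\abs P + \abs Q\big)\abs{P-Q}^2,
\end{align*}
and likewise to $\eqsim \phi_{\abs P}(\abs{P-Q})$ via the definition of the shifted $N$-function. For the last two lines I would use $S(Q):Q = \phi'(\abs Q)\abs Q \eqsim \phi(\abs Q)$ (from $\Delta_2(\phi)<\infty$, which the note after Definition~\ref{app:FuncSpace ass:phipp} guarantees is implied by the characteristics) and $\abs{S(P)-S(Q)} = \abs{\int_0^1 DS(\cdots)(P-Q)\dd\theta}\eqsim \int_0^1\phi''\abs{P-Q}\dd\theta \eqsim (\phi_{\abs P})'(\abs{P-Q})$.

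\textbf{The main obstacle.} The one genuinely delicate estimate is the scalar averaging bound above, controlling the integral of $\phi''$ along the segment from $Q$ to $P$ by its value at the endpoint argument $\abs P + \abs Q$. The difficulty is that $\abs{Q+\theta(P-Q)}$ can dip far below both $\abs P$ and $\abs Q$ when $P$ and $Q$ nearly cancel, so pointwise monotonicity of $\phi''$ is not available and one cannot simply compare integrands. The resolution is to split the parameter interval according to whether $\abs{Q+\theta(P-Q)}$ is comparable to $\abs{P-Q}$ or dominated by it, and on each piece use the quasi-doubling of $\phi''$ derived from~\eqref{app:FuncSpace eq:phipp} to absorb the shifts; the shifted-function identity $\varphi_\kappa'(t) = \frac{\phi'(\kappa+t)}{\kappa+t}t$ together with $\sup_\kappa \Delta_2(\varphi_\kappa,(\varphi_\kappa)^*)<\infty$ makes the comparison with $\phi_{\abs P}$ uniform in the shift. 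Once this scalar lemma is in hand the matrix statements follow by assembling the pieces, and I would remark that this is essentially the argument of~\cite{DieE08}, so the proof can be kept short by citing the scalar estimates there and focusing on the reduction.
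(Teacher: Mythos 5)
The paper does not prove this lemma at all --- it imports it verbatim from \cite[Lemma~3]{DieE08} --- and your sketch reconstructs precisely the argument of that cited source: integral representations for $S(P)-S(Q)$ and $V(P)-V(Q)$, the eigenvalue comparisons $DS(\xi)A:A \eqsim \phi''(\abs{\xi})\abs{A}^2$ and $\abs{DV(\xi)A}^2 \eqsim \phi''(\abs{\xi})\abs{A}^2$ forced by $\phi'(t)\eqsim t\phi''(t)$, and the reduction to the scalar averaging estimate $\int_0^1 \phi''\big(\abs{Q+\theta(P-Q)}\big)\dd \theta \eqsim \phi''\big(\abs{P}+\abs{Q}\big)$ with the correct case split for near-cancellation along the segment, so this is the same approach and it is correct. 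The only point you gloss is that for the norm statements $\abs{S(P)-S(Q)}$ and $\abs{V(P)-V(Q)}$ the integral representation plus triangle inequality gives only the upper bound; the matching lower bound follows by pairing with $(P-Q)/\abs{P-Q}$ and the positive-definite quadratic-form bounds you already established, so this is a presentational rather than a mathematical gap.
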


\begin{lemma}[{\cite[Lemma~32]{DieE08}}] \label{app:FuncSpace lem:GenYoung}
Let $\varphi$ be an uniformly convex $N$-function. Then for all $\delta >0$ there exists $C_\delta > 0$ such that for all $t,\,u \geq 0$
\begin{subequations}
\begin{align}
    tu &\leq \delta \varphi( t ) + C_\delta \varphi^*( u) \\
    t \varphi'(u) +  u\varphi'(t) &\leq \delta \varphi( t ) + C_\delta \varphi( u ).
\end{align}
\end{subequations}
\end{lemma}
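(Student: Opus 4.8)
The plan is to derive both inequalities from the Fenchel--Young inequality $tu \le \varphi(t) + \varphi^*(u)$ together with the $\Delta_2$-condition for $\varphi$ and $\varphi^*$ (which is available here, since uniform convexity of $\varphi$ implies $\Delta_2(\varphi,\varphi^*) < \infty$) and the elementary convexity bound $\varphi(\lambda t) \le \lambda \varphi(t)$, valid for $\lambda \in [0,1]$ because $\varphi(0) = 0$. The small parameter $\delta$ is produced purely by rescaling the two factors before applying Young's inequality, and the resulting dilation of the conjugate argument is reabsorbed through $\Delta_2$.

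For the first inequality I would assume $\delta \le 1$ (the case $\delta > 1$ is weaker) and split $tu = (\delta t)(\delta^{-1} u)$. Young's inequality gives $tu \le \varphi(\delta t) + \varphi^*(\delta^{-1} u)$. Convexity yields $\varphi(\delta t) \le \delta \varphi(t)$, and since $\delta^{-1} \ge 1$ the $\Delta_2$-condition for $\varphi^*$ gives $\varphi^*(\delta^{-1} u) \le C_\delta \varphi^*(u)$, with $C_\delta$ depending only on $\delta$ and $\Delta_2(\varphi^*)$. Combining the two bounds proves $tu \le \delta\varphi(t) + C_\delta \varphi^*(u)$.

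For the second inequality the key observation is that the rescaling can place the small constant on either factor, so from the first step one obtains both $ab \le \delta \varphi(a) + C_\delta \varphi^*(b)$ and the dual form $ab \le C_\delta \varphi(a) + \delta \varphi^*(b)$ (split $ab = (\delta^{-1}a)(\delta b)$ and absorb via $\Delta_2(\varphi)$ instead). I would also record the auxiliary estimate $\varphi^*(\varphi'(t)) \le t\varphi'(t) \eqsim \varphi(t)$, where the equivalence $t\varphi'(t) \eqsim \varphi(t)$ is the scalar reading of the Equivalence lemma, Lemma~\ref{app:FuncSpace lem:hammer} (with $S(\xi):\xi = \varphi'(\abs{\xi})\abs{\xi}$). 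Then I treat the two summands separately: apply the first form with $a = t$, $b = \varphi'(u)$ to get $t\varphi'(u) \le \delta\varphi(t) + C_\delta \varphi^*(\varphi'(u)) \le \delta\varphi(t) + c\,C_\delta \varphi(u)$; and apply the dual form with $a = u$, $b = \varphi'(t)$ to get $u\varphi'(t) \le C_\delta \varphi(u) + \delta \varphi^*(\varphi'(t)) \le C_\delta\varphi(u) + c\,\delta\,\varphi(t)$. Summing and renaming the constants (replace $\delta$ by $(1+c)\delta$ and collect the $\varphi(u)$-coefficients into a new $C_\delta$) yields $t\varphi'(u) + u\varphi'(t) \le \delta \varphi(t) + C_\delta \varphi(u)$.

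I expect the only genuine bookkeeping to lie in the second inequality: one must consistently route the factor $\varphi'(\cdot)$ into the conjugate slot and then spend the estimate $\varphi^*(\varphi'(t)) \lesssim \varphi(t)$, while tracking which summand carries the small $\delta$ and which carries the large $C_\delta$. All constants depend only on $\delta$ and on the characteristics of $\varphi$ through $\Delta_2(\varphi,\varphi^*)$, so no uniform-convexity input beyond these $\Delta_2$-bounds is required.
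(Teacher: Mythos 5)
Your proposal is correct and matches the standard argument: the paper gives no proof of this lemma, citing \cite[Lemma~32]{DieE08}, and the proof there is exactly your route --- Young's inequality applied after rescaling the factors, absorption of the dilation via $\Delta_2(\varphi,\varphi^*)<\infty$ (which uniform convexity guarantees), and the estimate $\varphi^*(\varphi'(t)) \le t\varphi'(t) \eqsim \varphi(t)$ to handle the second inequality. Your bookkeeping of which summand carries $\delta$ and which carries $C_\delta$ is also handled correctly, so nothing is missing.
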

The following inequality follows immediately from Lemma~\ref{app:FuncSpace lem:hammer} and Lemma~\ref{app:FuncSpace lem:GenYoung}.
\begin{lemma}[Young type inequality]
  \label{app:FuncSpace lem:young}
  Let $\phi$ be an uniformly convex N-function. Then for each $\delta>0$ there exists $C_\delta \geq 1$ (only depending on~$\delta$ and the characteristics of~$\phi$) such that
  \begin{align*}
    \big({ S}( P) - { S}( Q)\big) :
    \big( R- Q \big) &\leq \delta \bigabs{V(P)-V(Q)}^2+C_\delta \bigabs{V(R)-V(Q)}^2
  \end{align*}
  for all $P, Q, R\in\mathbb{R}^{N\times n}$.
\end{lemma}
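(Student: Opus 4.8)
The plan is to prove this pointwise algebraic inequality on matrices by combining the two preceding lemmas in a direct way, with no measure theory or stochastics involved. The target inequality bounds the mixed bilinear expression $(S(P)-S(Q)):(R-Q)$; the key observation is that the factor $(R-Q)$ can be split so that the monotonicity factor $(S(P)-S(Q))$ pairs naturally with $V(P)-V(Q)$, while the remaining slack is absorbed into $V(R)-V(Q)$. First I would apply Lemma~\ref{app:FuncSpace lem:hammer}, which provides the two-sided equivalences $(S(P)-S(Q)):(P-Q) \eqsim \abs{V(P)-V(Q)}^2$ and $\abs{S(P)-S(Q)} \eqsim (\varphi_{\abs{P}})'(\abs{P-Q})$. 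The second of these is the crucial one: it converts the magnitude of the monotone difference $S(P)-S(Q)$ into the derivative of a shifted $N$-function evaluated at $\abs{P-Q}$, which is exactly the form that the Young inequality of Lemma~\ref{app:FuncSpace lem:GenYoung} is designed to handle.

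\textbf{Key steps.} Starting from Cauchy--Schwarz on the matrix inner product, I would estimate
\begin{align*}
\big(S(P)-S(Q)\big):(R-Q) \leq \abs{S(P)-S(Q)}\,\abs{R-Q}.
\end{align*}
Using Lemma~\ref{app:FuncSpace lem:hammer} I replace $\abs{S(P)-S(Q)}$ by $(\varphi_{\abs{P}})'(\abs{P-Q})$ (up to constants), so that the right-hand side becomes, up to a multiplicative constant, $(\varphi_{\abs{P}})'(\abs{P-Q})\,\abs{R-Q}$. The decisive move is then to recognise this product as the input to the second inequality of Lemma~\ref{app:FuncSpace lem:GenYoung} applied with the shifted $N$-function $\varphi = \varphi_{\abs{P}}$, the arguments $t = \abs{P-Q}$ and $u = \abs{R-Q}$. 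That lemma yields, for any $\delta>0$,
\begin{align*}
\abs{R-Q}\,(\varphi_{\abs{P}})'(\abs{P-Q}) \leq \delta\, \varphi_{\abs{P}}(\abs{P-Q}) + C_\delta\, \varphi_{\abs{P}}(\abs{R-Q}).
\end{align*}
Finally I would convert both shifted-$N$-function terms back into $V$-differences using the equivalence $\varphi_{\abs{P}}(\abs{P-Q}) \eqsim \abs{V(P)-V(Q)}^2$ from Lemma~\ref{app:FuncSpace lem:hammer}, and an analogous bound $\varphi_{\abs{P}}(\abs{R-Q}) \eqsim \abs{V(R)-V(Q)}^2$ for the second term, absorbing all characteristic-dependent constants into $\delta$ and $C_\delta$ and adjusting $\delta$ by a fixed factor to recover the stated form.

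\textbf{Main obstacle.} The one genuinely delicate point is the treatment of the second term: Lemma~\ref{app:FuncSpace lem:hammer} states $\varphi_{\abs{P}}(\abs{P-Q}) \eqsim \abs{V(P)-V(Q)}^2$ with the shift anchored at $\abs{P}$, but to close the argument I need $\varphi_{\abs{P}}(\abs{R-Q}) \lesssim \abs{V(R)-V(Q)}^2$, where the shift base $\abs{P}$ and the difference argument $\abs{R-Q}$ involve three distinct matrices. The hard part will be justifying this change of base point. I expect this to follow from the uniform-in-$\kappa$ control of the family $\set{\varphi_\kappa}$ together with the shift-change properties of $\Delta_2$-regular $N$-functions (the fact that $\varphi_a(t) \eqsim \varphi_b(t)$ can be compared when the base points differ in a controlled way), so that $\varphi_{\abs{P}}(\abs{R-Q})$ is comparable to $\varphi_{\abs{Q}}(\abs{R-Q}) \eqsim \abs{V(R)-V(Q)}^2$; the constants remain uniform because they depend only on the characteristics of $\phi$. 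Once this shift comparison is in place, the remaining manipulations are routine constant-bookkeeping, and choosing $\delta$ small completes the proof.
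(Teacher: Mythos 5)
Your architecture (Cauchy--Schwarz, then Lemma~\ref{app:FuncSpace lem:hammer} to pass to $(\phi_{\abs{P}})'(\abs{P-Q})$, then Lemma~\ref{app:FuncSpace lem:GenYoung}, then back to $V$-differences) is sound, but the step you flag as the main obstacle is resolved incorrectly: the uniform multiplicative comparison $\phi_{\abs{P}}(\abs{R-Q}) \eqsim \phi_{\abs{Q}}(\abs{R-Q})$ is simply false for arbitrary $P,Q$. The shift bases $\abs{P}$ and $\abs{Q}$ do not ``differ in a controlled way'' here --- they are unrelated matrices. Concretely, for $\phi(t)=t^p/p$ with $p>2$ one has $\phi_a(t)\eqsim (a+t)^{p-2}t^2$, so with $\abs{P}=a$ large, $\abs{Q}=0$ and $t$ small the ratio $\phi_{\abs{P}}(t)/\phi_{\abs{Q}}(t)\eqsim (a/t)^{p-2}$ is unbounded; no constant depending only on the characteristics of $\phi$ can work. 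The correct tool is precisely the paper's Change of Shift, Lemma~\ref{app:FuncSpace lem:shift_ch}, whose penalty for changing the base is \emph{additive}, not multiplicative: $\phi_{\abs{P}}(t) \leq C_{\delta'}\,\phi_{\abs{Q}}(t) + \delta'\,\abs{V(P)-V(Q)}^2$. Applying this with $t=\abs{R-Q}$ and $\delta' = \delta/C_\delta$ (to tame the factor $C_\delta$ your Young step produced), and then $\phi_{\abs{Q}}(\abs{R-Q}) \eqsim \abs{V(R)-V(Q)}^2$ from Lemma~\ref{app:FuncSpace lem:hammer} applied to the pair $(Q,R)$, the extra $\abs{V(P)-V(Q)}^2$ term lands exactly in the $\delta$ slot of the target inequality, and your proof closes.

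There is also a cleaner route, which is evidently what makes the paper call the lemma immediate from Lemmas~\ref{app:FuncSpace lem:hammer} and~\ref{app:FuncSpace lem:GenYoung} alone: anchor the shift at $\abs{Q}$, the matrix common to both differences, from the start. Since $\abs{S(P)-S(Q)}$ is symmetric under swapping $P$ and $Q$, Lemma~\ref{app:FuncSpace lem:hammer} gives $\abs{S(P)-S(Q)} \eqsim (\phi_{\abs{Q}})'(\abs{P-Q})$ as well. Then Lemma~\ref{app:FuncSpace lem:GenYoung} with $\varphi = \phi_{\abs{Q}}$, $t=\abs{P-Q}$, $u=\abs{R-Q}$ yields $\delta\,\phi_{\abs{Q}}(\abs{P-Q}) + C_\delta\,\phi_{\abs{Q}}(\abs{R-Q})$, and \emph{both} terms convert directly by Lemma~\ref{app:FuncSpace lem:hammer} (applied to the pairs $(P,Q)$ and $(Q,R)$) into $\delta\,\abs{V(P)-V(Q)}^2 + C_\delta\,\abs{V(R)-V(Q)}^2$, with no change of shift needed. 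Your choice of base $\abs{P}$ is what manufactured the three-matrix problem; re-anchoring at $\abs{Q}$ dissolves it.
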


\begin{lemma}[{Change of Shift  \cite[Lemma~42]{DieForTomWan20}}]
 \label{app:FuncSpace lem:shift_ch}
 Let $\phi$ be an uniformly convex N-function. Then for each $\delta>0$ there
 exists $C_\delta \geq 1$ (only depending on~$\delta$ and the
 characteristics of~$\phi$) such that
  \begin{align*}
    \phi_{\abs{ P}}(t) &\leq C_\delta\, \phi_{\abs{ Q}}(t)
    +\delta\, \abs{ V( P) -  V( Q)}^2,
  \end{align*}
 for all $ P, Q\in\setR^{n \times n}$ and $t\geq0$.
\end{lemma}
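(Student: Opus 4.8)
The plan is to reduce this pointwise matrix inequality to a purely scalar statement about the two shift parameters, and then dispatch that scalar inequality with a single threshold argument. First I would invoke the equivalence lemma (Lemma~\ref{app:FuncSpace lem:hammer}), which gives $\abs{V(P)-V(Q)}^2 \eqsim \phi_{\abs P}(\abs{P-Q})$, together with the reverse triangle inequality $\abs{P-Q} \geq \abs{\abs P - \abs Q}$ and the monotonicity of the increasing N-function $\phi_{\abs P}$. Writing $a := \abs P$, $b := \abs Q$ and $r := \abs{a-b}$, these combine to $\phi_a(r) \leq \phi_a(\abs{P-Q}) \lesssim \abs{V(P)-V(Q)}^2$. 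Hence it suffices to prove the scalar inequality $\phi_a(t) \leq C_\delta\, \phi_b(t) + \delta\, \phi_a(r)$ for all $a,b,t \geq 0$; feeding this back, and renaming $\delta$ to absorb the fixed equivalence constant, reproduces exactly the claimed bound, since $\phi_a = \phi_{\abs P}$ and $\phi_b = \phi_{\abs Q}$.

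The two auxiliary tools I would record are, first, the equivalence $\phi_a(t) \eqsim \phi''(a+t)\,t^2$ uniformly in $a,t \geq 0$, which is a consequence of uniform convexity $\phi'(s) \eqsim s\,\phi''(s)$ and the almost-monotonicity of $\phi''$ applied to the defining integral in~\eqref{app:FuncSpace eq:def_shift} (it can also be read off from Lemma~\ref{app:FuncSpace lem:hammer} by aligning $P,Q$ on a ray); and second, the doubling property $\phi''(\lambda s) \eqsim \phi''(s)$ for $\lambda \geq 1$ with constant depending only on $\lambda$ and the characteristics of $\phi$, which likewise follows from $\Delta_2(\phi,\phi^*) < \infty$.

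With these in hand the scalar inequality splits at the threshold $t \lessgtr \delta r$ (assume $\delta \leq 1$ and $r>0$, the case $r=0$ being trivial since then $a=b$). In the regime $t \leq \delta r$, convexity of $\phi_a$ with $\phi_a(0)=0$ makes $s \mapsto \phi_a(s)/s$ nondecreasing, so $\phi_a(t) \leq (t/r)\,\phi_a(r) \leq \delta\,\phi_a(r)$, which is absorbed directly into the $\delta$-term. In the complementary regime $t > \delta r$, one has $r < t/\delta \leq (a+t)/\delta$ and $r < t/\delta \leq (b+t)/\delta$ (using $t \leq a+t$ and $t \leq b+t$), whence $a+t \leq (b+t)+r \leq (1+\delta^{-1})(b+t)$ and symmetrically $b+t \leq (1+\delta^{-1})(a+t)$; thus $a+t$ and $b+t$ are comparable with a $\delta$-dependent constant. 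The doubling property then yields $\phi''(a+t) \eqsim \phi''(b+t)$, and the representation $\phi_a(t) \eqsim \phi''(a+t)t^2$ gives $\phi_a(t) \leq C_\delta\,\phi_b(t)$, absorbed into the first term.

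The step I expect to be the crux is the regime $t \eqsim r$ with the shifts $a$ and $b$ far apart, where neither term on the right is a priori dominant. Here monotonicity of $\phi''$ is unavailable in a fixed direction (it increases for $p \geq 2$ but decreases for $p < 2$), so the argument must rely on the two-sided comparability $a+t \eqsim b+t$ rather than on one-sided size comparisons, and the constant $C_\delta$ degenerates as $\delta \to 0$ precisely because transferring $\phi''(a+t)$ to $\phi''(b+t)$ costs an iteration of the doubling estimate $\sim \log(1/\delta)$ times. Keeping careful track that all constants depend only on $\delta$ and the characteristics of $\phi$, and nothing else, is what makes the bound uniform in $P,Q$ and $t$.
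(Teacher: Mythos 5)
Your proof is correct, and since the paper states this lemma purely by citation to \cite{DieForTomWan20} without reproducing a proof, there is no internal argument to diverge from; your route is essentially the standard one from that literature. Specifically, reducing via Lemma~\ref{app:FuncSpace lem:hammer} and $\abs{P-Q}\geq \bigabs{\abs{P}-\abs{Q}}$ to the scalar inequality $\phi_a(t)\leq C_\delta\,\phi_b(t)+\delta\,\phi_a(\abs{a-b})$, then splitting at $t\lessgtr\delta\abs{a-b}$ using $\phi_a(t)\eqsim\phi''(a+t)\,t^2$ (with the two-sided comparability $a+t\eqsim_\delta b+t$ feeding into the doubling of $\phi''\eqsim\phi'(\cdot)/(\cdot)$, which correctly sidesteps the non-monotonicity of $\phi''$ for $p<2$) is exactly how the change-of-shift estimate is proved in the cited sources, with all constants depending only on $\delta$ and the characteristics of $\phi$ as required.
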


\subsection{Structural assumptions on the stochastic data} \label{ch:Stokes sec:Ass}
Let $U$ be some separable Hilbert space and $\set{u_j}_{j\in \mathbb{N}}$ be a complete orthonormal system of $U$. We render the stochastic input via a cylindrical noise on the abstract space $U$.
\begin{assumption}[Cylindrical Wiener process]\label{ch:Reg ass:CylindricalNoise}
We assume that $W$ is an $U$-valued cylindrical Wiener process with respect to $(\mathcal{F}_t)$. Formally $W$ can be represented as
\begin{align} \label{ch:Reg rep:W}
W = \sum_{j \in \mathbb{N}} u_j \beta^j,
\end{align}
where $\set{\beta^j}_{j\in \mathbb{N}}$ are independent $1$-dimensional standard Brownian motions.
\end{assumption}

We construct the noise coefficient~$G$ as a two parameter map. In its first parameter it acts as a Nemytskii operator whereas the dependence on the Wiener process is linear. 
\begin{definition}[Noise coefficient]
Let $\set{g_j}_{j\in \mathbb{N}} : \mathcal{O} \times \mathbb{R}^N \to \mathbb{R}^N$. We define $G$ by
\begin{align} \label{ch:Reg def:NoiseCoeff}
\begin{aligned}
G: L^2_{\mathcal{F}}(\Omega \times I; L^2_x) \times U &\to L^2_{\mathcal{F}}(\Omega \times I; L^2_x) , \\
(v,u) &\mapsto G(v)u := \sum_{j \in \mathbb{N}} g_j\big(\cdot,v(\cdot)\big)( u_j, u)_U.
\end{aligned}
\end{align}
\end{definition}

$G$ is uniquely determined by the sequence of functions $\set{g_j}$. In particular, regularity assumptions and summation properties need to be imposed on $\set{g_j}$ to obtain an operator $G$ that is stable on specific function spaces. 

The construction of the stochastic integral in the framework of Hilbert spaces can be done by the It\^o isometry and requires the integrand to be an Hilbert-Schmidt operator, i.e., if $G \in L^2_{\mathcal{F}}\big(\Omega \times I; L_2(U; L^2_x) \big)$, then 
\begin{align} \label{eq:StochIntegral}
\mathcal{I}_W(G) := \int_0^{\cdot} G \dd W(t) := \sum_{j\in \mathbb{N}} \int_0^{\cdot} G(u_j) \dd \beta^j(t)
\end{align}
converges in $L^2\big(\Omega; C(I; L^2_x) \big)$. Moreover, we have the equivalence
\begin{align} \label{eq:StochIntegralIto}
\mathbb{E} \left[ \sup_{t \in I} \norm{\mathcal{I}_W(G)}_{L^2_x}^2 \right] \eqsim \mathbb{E} \left[ \int_I \norm{G}_{L_2(U;L^2_x)}^2  \dd t\right].
\end{align}

The construction of the stochastic integral in general Banach spaces is delicate and one needs to look for a suitable generalization of the It\^o isometry. It turns out that $\gamma$-radonifying operators are natural in the context of stochastic integration. The following result was obtained by Ondreját and Veraar and contains optimal stability of the stochastic integral driven by a cylindrical Wiener process for $\gamma$-radonifying operators with values in a separable $2$-smooth Banach space. For more details on $\gamma$-radonifying operators we refer to the survey~\cite{MR2655391}.

\begin{theorem}[{Stability of stochastic integrals \cite[Theorem~3.2]{MR4116708}}] \label{app:StochInt thm:Stability}
Let $E$ be a separable $2$-smooth Banach space. Additionally, let $q \in[1,\infty)$, $p \in (2,\infty]$, $\alpha = \tfrac{1}{2} - \tfrac{1}{p}$, $\Phi_2(t) := e^{t^2}-1$, $N_q(t):= t^q \ln^{q/2}(t+1)$ and $G \in L^0_\mathcal{F}\big(\Omega;L^p(I;\gamma(U;E))\big)$. Then
\begin{enumerate}
\item $\mathcal{I}_W(G) \in B^{\alpha}_{\Phi_2,\infty}(I;E)$ a.s.,
\item \label{app:StochInt it:2}$\left( \mathbb{E} \left[ \norm{\mathcal{I}_W(G)}_{B^{\alpha}_{q,\infty}(I;E)}^{2q} \right] \right)^\frac{1}{2q}\leq C \sqrt{q} \norm{G}_{L^{2q}(\Omega;L^{p}(I;\gamma(U;E)))}$,
\item $\left( \mathbb{E} \left[ \norm{\mathcal{I}_W(G)}_{B^{\alpha}_{\Phi_2,\infty}(I;E)}^{q} \right] \right)^\frac{1}{q}\leq C \sqrt{q} \norm{G}_{L^{\infty}(\Omega;L^{p}(I;\gamma(U;E)))}$,
\item $\norm{\mathcal{I}_W(G)}_{L^{\Phi_2}(\Omega;B^{\alpha}_{\Phi_2,\infty}(I;E)} \leq C \norm{G}_{L^\infty(\Omega;L^p(I;\gamma(U;E)))}$,
\item \label{app:StochInt it:5}$\norm{\mathcal{I}_W(G)}_{L^q(\Omega;B^{\alpha}_{\Phi_2,\infty}(I;E))} \leq C \sqrt{q} \norm{G}_{L^{N_q}(\Omega;L^p(I;\gamma(U;E)))}$.
\end{enumerate}
\end{theorem}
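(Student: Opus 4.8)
The plan is to reduce the whole theorem to a single sharp moment bound on the increments of $\mathcal{I}_W(G)$ and then to feed that bound into a dyadic characterisation of the Besov--Orlicz norm. The engine is the Burkholder--Davis--Gundy inequality in $2$-smooth Banach spaces (in the $\gamma$-radonifying formulation of van Neerven--Veraar--Weis, equivalently the martingale inequality of Pinelis), which gives, for every $r \in [2,\infty)$,
\[
 \Bigl( \mathbb{E} \norm{ \int_s^t G \dd W }_E^r \Bigr)^{1/r} \le C \sqrt{r} \, \Bigl( \mathbb{E} \Bigl( \int_s^t \norm{G}_{\gamma(U;E)}^2 \dsigma \Bigr)^{r/2} \Bigr)^{1/r}.
\]
The $\sqrt{r}$-growth of the constant is sharp and encodes the sub-Gaussian (Orlicz $\psi_2$) tails that ultimately produce the exponential integrability in parts (1), (3) and (4). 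I would first record the master increment estimate obtained from H\"older's inequality in time: since $p>2$ and $1-2/p = 2\alpha$,
\[
 \int_s^t \norm{G}_{\gamma(U;E)}^2 \dsigma \le \abs{t-s}^{2\alpha} \norm{G}_{L^p(s,t;\gamma(U;E))}^2 \le \abs{t-s}^{2\alpha} \norm{G}_{L^p(I;\gamma(U;E))}^2,
\]
so that for every $r \ge 2$
\[
 \Bigl( \mathbb{E} \norm{ \mathcal{I}_W(G)(t) - \mathcal{I}_W(G)(s) }_E^r \Bigr)^{1/r} \le C \sqrt{r} \, \abs{t-s}^{\alpha} \, \norm{G}_{L^r(\Omega;L^p(I;\gamma(U;E)))}.
\]
The endpoint $p=\infty$ gives $\alpha=1/2$, matching the regularity of the Wiener process itself.

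The second ingredient is a deterministic dyadic (Haar/Ciesielski-type) characterisation of the Besov--Orlicz seminorm that replaces the continuum of differences $\tau_h u$ by increments over dyadic steps $h=2^{-j}$,
\[
 \seminorm{u}_{B^\alpha_{\Phi,\infty}(I;E)} \eqsim \sup_{j \ge 0} 2^{j\alpha} \norm{ \tau_{2^{-j}} u }_{L^\Phi(I \cap I - \set{2^{-j}};E)} .
\]
The decisive point is that the exponent $\alpha$ here is exactly the one produced by the master estimate, so $2^{j\alpha}\abs{2^{-j}}^{\alpha}=1$ and every dyadic scale contributes a quantity that is uniformly bounded in $j$. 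Equally important, the integrability inside the Besov norm is the \emph{Orlicz average} $L^{\Phi_2}$ (respectively $L^q$) in time rather than a supremum in time, so passing from the individual increments over the $\sim 2^j$ subintervals at scale $j$ to the $L^\Phi$-norm of the increment function incurs no logarithmic loss.

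For the integrability in $\omega$ I proceed case by case. For part (4) I use Pinelis' sub-Gaussian tail again, now in $\omega$: under $\norm{G}_{L^\infty(\Omega;L^p)}$ each increment is $\psi_2$ in $\omega$ with norm $\lesssim \abs{t-s}^\alpha \norm{G}_{L^\infty(\Omega;L^p)}$, hence so is the whole Besov norm, which is exactly the $L^{\Phi_2}(\Omega)$ statement with an absolute constant. Part (3) is the downgrade of (4) through the universal inequality $\norm{Y}_{L^q(\Omega)} \le C\sqrt{q}\,\norm{Y}_{L^{\Phi_2}(\Omega)}$ for $\psi_2$-variables. Part (2) follows from the master estimate at moment level $r=2q$ combined with the dyadic reduction above and Fubini/Minkowski's integral inequality, which interchange $\mathbb{E}$ with the time integral defining $B^\alpha_{q,\infty}$. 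Finally, the almost-sure statement (1) is obtained by localising on the events $\set{\norm{G}_{L^p(I;\gamma(U;E))} \le M}$, on which the quantitative bounds apply, and letting $M\to\infty$.

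The steps I expect to be the main obstacle are twofold. First, the passage to the fine index $\infty$ is borderline: at fine index $r<\infty$ the seminorm already diverges logarithmically — precisely the failure $W \notin B^{1/2}_{q,q}$ recalled in the introduction — so the supremum over dyadic scales cannot be summed and must instead be controlled by a chaining/maximal argument that survives the $\omega$-integration without an extra $\sqrt{\log}$ factor. Second, part (5) is the genuinely delicate one, since $G$ is only $N_q$-integrable in $\omega$ with $N_q(t)=t^q\ln^{q/2}(t+1)$ and the clean $L^\infty(\Omega)$ truncation of part (4) is unavailable; one must optimise the $\sqrt{r}$-moment bound over $r$ against $N_q$, where the logarithmic factor $\ln^{q/2}$ is exactly the dual cost of the Gaussian $\sqrt{r}$-growth, realised at the optimal truncation level $r \eqsim \log\!\bigl(e+\norm{G}_{L^p}/\lambda\bigr)$. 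Tracking these constants sharply through the Young-type duality for the conjugate pair of $N_q$ is the technical heart; once that bookkeeping is set up, parts (2)--(4) appear as the special cases $p$ finite, $G\in L^\infty(\Omega)$, and the $L^{\Phi_2}(\Omega)$ endpoint.
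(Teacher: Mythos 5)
You should know at the outset that the paper does not prove this statement: it is quoted verbatim as Theorem~3.2 of Ondrej\'at--Veraar \cite{MR4116708}, so the only benchmark is the cited source. Measured against that, your architecture is essentially the right one and matches theirs in spirit: the dimension-free Burkholder/Pinelis inequality in $2$-smooth spaces with the sharp $C\sqrt{r}$ growth, H\"older in time producing the increment exponent $\alpha = \tfrac12 - \tfrac1p$, sub-Gaussian tails in $\omega$ for each increment, and a reduction of the Besov--Orlicz seminorm to dyadic scales. The main structural difference is that Ondrej\'at and Veraar implement the dyadic reduction through the Ciesielski isomorphism, mapping $B^{\alpha}_{\Phi_2,\infty}(I;E)$ onto a weighted sequence space of Schauder coefficients, rather than working with the difference seminorm directly; this buys them clean bookkeeping precisely at the step where your sketch stalls. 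Your localization for part~(1) is also right in substance but needs the standard repair: the event $\{\norm{G}_{L^p(I;\gamma(U;E))} \le M\}$ is not adapted, so one localizes via the stopping times $\tau_M := \inf\{t : \norm{G}_{L^p(0,t;\gamma(U;E))} \ge M\}$ and uses that $\mathcal{I}_W(G) = \mathcal{I}_W(G\mathds{1}_{[0,\tau_M)})$ on that event.

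The genuine gap is the one you half-flag yourself: the interchange of the supremum over dyadic scales with the integrability in $\omega$. For part~(2) your proposed mechanism --- ``Fubini/Minkowski to interchange $\mathbb{E}$ with the time integral'' --- fails as stated, because Minkowski passes $\mathbb{E}$ inside the $L^q(I)$-integral only for each \emph{fixed} $h$; that controls $\sup_h h^{-\alpha}\norm{\tau_h \mathcal{I}_W(G)}_{L^{2q}(\Omega;L^q(I;E))}$, whereas the claim requires $\norm{\sup_h h^{-\alpha}\norm{\tau_h \mathcal{I}_W(G)}_{L^q(I;E)}}_{L^{2q}(\Omega)}$, and the supremum sits on the wrong side of the expectation; an $L^{2q}(\Omega)$-norm of a supremum over infinitely many scales is not dominated by the supremum of the norms. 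The same interchange is the heart of parts~(3)--(5), and the mechanism that closes it in the cited proof is quantitative: at level $j$ there are $\sim 2^j$ coefficients, each sub-Gaussian of size $2^{-j\alpha}$ times the data, their maximum costs a factor $\sqrt{\log 2^j} \eqsim \sqrt{j}$, and the Luxemburg normalization of $L^{\Phi_2}$ (with $\Phi_2^{-1}(2^j) \eqsim \sqrt{j}$) absorbs exactly this factor. So your assertion that the Orlicz average ``incurs no logarithmic loss'' is true but inverted: the loss does occur, and $\Phi_2$ is what pays for it --- which is also why the finite fine index fails, as in \cite{VerHyt08}. Finally, for part~(5) you correctly identify the optimization of the $\sqrt{r}$-moment bound at $r \eqsim \log(e + \norm{G}/\lambda)$ against the conjugate of $N_q$ as the technical heart, but it is left entirely as a plan. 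As a strategy document this is on target; as a proof it is incomplete at exactly the steps that carry the content.
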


We are particularly interested in the cases $E = W^{1,p}_{0,x}$ for $p \geq 2$. This space is $2$-smooth, cf. \cite{MR3617459}. Naturally, if $E$ is $2$-smooth, then Hilbert-Schmidt operators are $\gamma$-radonifying, i.e., $L_2(U;E) \subset \gamma(U;E)$. Indeed,
\begin{align} \label{app:StochInt eq:RelGammaL2}
\norm{G}_{\gamma(U;E)}^2 := \mathbb{E}_{\gamma} \left[ \Biggnorm{\sum_{j\in \mathbb{N}} G(u_j)\gamma_j }_E^2 \right] \leq C_2^2 \sum_{j\in \mathbb{N}} \norm{G(u_j)}_E^2 = :C_2^2 \norm{G}_{L_2(U;E)}^2,
\end{align}
where $C_2$ is the martingale type $2$ constant of $E$ and $\set{\gamma_j}_{j\in \mathbb{N}} \sim \mathcal{N}(0,1)$ are independent and identically distributed. 

From now on we assume that $W$ is given by~\eqref{ch:Reg rep:W} and $G$ is of the form~\eqref{ch:Reg def:NoiseCoeff}.

\subsection{Gradient flow} \label{ch:Stokes sec:Grad}
The Helmholtz-Leray projection~$\Pi_{\Div}$ allows to reformulate~\eqref{ch:Stokes intro:p-Stokes-stoch01} and~\eqref{ch:Stokes intro:p-Stokes-stoch02} into 
\begin{subequations} \label{ch:Stokes intro:p-Stokes-stoch00New}
\begin{alignat}{2} \label{ch:Stokes intro:p-Stokes-stoch01New}
\dd u  - \Pi_{\Div} \Div S(\varepsilon u) \dt  &= \Pi_{\Div} G(u) \dd W(t) \quad &&\text{ in } (0,T) \times \mathcal{O}, \\ \label{ch:Stokes intro:p-Stokes-stoch02New}
\nabla \pi \dd t &= \Pi_{\Div}^\perp \left( \Div S(\varepsilon u) \dt + G(u) \dd W(t)  \right)\quad &&\text{ in } (0,T) \times \mathcal{O}.
\end{alignat}
\end{subequations}

Now it is possible to interpret~\eqref{ch:Stokes intro:p-Stokes-stoch01New} as a stochastically perturbed gradient flow of the energy
\begin{align} \label{ch:Stokes eq:Energy}
\mathcal{J}(u) := \int_\mathcal{O} \varphi_{\kappa} ( \abs{\varepsilon u}) \dd x
\end{align}
induced by the potential $\varphi_{\kappa}(t) := \int_0^t (\kappa + s)^{p-2} s \dd s$ on the space $W^{1,p}_{0,\Div}$.

It has been observed by Diening and Kreuzer~\cite{DieKre08} that the energy gap of $u \in W^{1,p}_x$ towards a minimizer~$ u^* \in \mathrm{arg min}_{v \in W^{1,p}_x} \mathcal{J}(v)$ is proportional to the distance measured in the $V$-quasi norm, i.e.,
\begin{align} \label{eq:energyGapOpt}
\mathcal{J}(u) - \mathcal{J}(u^*) \eqsim  \norm{V(\varepsilon u) - V(\varepsilon u^*)}_{L^2_x}^2.
\end{align}
In fact, the result can be generalized to energy gaps between arbitrary functions $u,v \in W^{1,p}_x$.

\begin{lemma} \label{lem:Taylor}
Let $u,v \in W^{1,p}_x$. Then 
\begin{align} \label{eq:StronglyConvex}
&\mathcal{J}(u) - \mathcal{J}(v) - D \mathcal{J}(v)[u-v] \eqsim \norm{V(\varepsilon u) - V(\varepsilon v)}_{L^2_x}^2.
\end{align}
\end{lemma}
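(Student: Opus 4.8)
The plan is to reduce the claim to a pointwise estimate on the integrand and then invoke the algebraic equivalence of Lemma~\ref{app:FuncSpace lem:hammer}. Writing $F(\xi) := \varphi_\kappa(\abs{\xi})$ for $\xi \in \mathbb{R}^{n\times n}$, one has $\nabla F(\xi) = S(\xi)$ and $D\mathcal{J}(v)[w] = \int_{\mathcal{O}} S(\varepsilon v) : \varepsilon w \dx$, so that with $P := \varepsilon u$ and $Q := \varepsilon v$ the left-hand side of~\eqref{eq:StronglyConvex} becomes
\begin{align*}
\mathcal{J}(u) - \mathcal{J}(v) - D\mathcal{J}(v)[u-v] = \int_{\mathcal{O}} \big( F(P) - F(Q) - S(Q) : (P - Q) \big) \dx,
\end{align*}
i.e.\ the spatial integral of the Bregman distance of $F$ between $P$ and $Q$. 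Since every equivalence below has constants depending only on the characteristics of $\varphi_\kappa$ — which are uniform in $\kappa \geq 0$ — it suffices to prove the pointwise bound $F(P) - F(Q) - S(Q):(P-Q) \eqsim \abs{V(P) - V(Q)}^2$ for all $P,Q \in \mathbb{R}^{n\times n}$ and then integrate.

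First I would linearise along the segment $Q_\theta := Q + \theta(P - Q)$, $\theta \in [0,1]$. The fundamental theorem of calculus gives $F(P) - F(Q) = \int_0^1 S(Q_\theta):(P-Q)\,\mathrm{d}\theta$, whence, using $Q_\theta - Q = \theta(P-Q)$,
\begin{align*}
F(P) - F(Q) - S(Q):(P-Q) = \int_0^1 \big( S(Q_\theta) - S(Q) \big) : (P - Q) \,\mathrm{d}\theta = \int_0^1 \frac{1}{\theta} \big( S(Q_\theta) - S(Q) \big) : (Q_\theta - Q) \,\mathrm{d}\theta.
\end{align*}
Applying Lemma~\ref{app:FuncSpace lem:hammer} to the integrand (with $\phi = \varphi_\kappa$, and noting that the bilinear quantity $(S(\cdot)-S(\cdot)):(\cdot-\cdot)$ is symmetric, so the shift may be taken at $\abs{Q}$) yields
\begin{align*}
\big( S(Q_\theta) - S(Q) \big) : (Q_\theta - Q) \eqsim (\varphi_\kappa)_{\abs{Q}}\big( \abs{Q_\theta - Q} \big) = (\varphi_\kappa)_{\abs{Q}}\big( \theta \abs{P - Q} \big),
\end{align*}
so that $F(P) - F(Q) - S(Q):(P-Q) \eqsim \int_0^1 \theta^{-1} (\varphi_\kappa)_{\abs{Q}}(\theta \abs{P-Q}) \,\mathrm{d}\theta$.

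It remains to evaluate the scalar integral. Writing $\psi := (\varphi_\kappa)_{\abs{Q}}$ and $t := \abs{P - Q}$, I claim $\int_0^1 \theta^{-1}\psi(\theta t)\,\mathrm{d}\theta \eqsim \psi(t)$, with constants independent of the shift $\abs{Q}$ and of $\kappa$. The upper bound is convexity: since $\psi$ is an $N$-function with $\psi(0)=0$, one has $\psi(\theta t) \leq \theta \psi(t)$, whence $\int_0^1 \theta^{-1}\psi(\theta t)\,\mathrm{d}\theta \leq \psi(t)$. For the lower bound one restricts to $\theta \in [1/2,1]$, where $\theta^{-1}\geq 1$ and $\psi(\theta t) \geq \psi(t/2) \gtrsim \psi(t)$; the last step uses the $\Delta_2$-condition of $\psi$, which holds uniformly over all shifts and all $\kappa \geq 0$ because $\sup_{\kappa\geq 0}\Delta_2(\varphi_\kappa,(\varphi_\kappa)^*) < \infty$. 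Combining the two equivalences and once more invoking $(\varphi_\kappa)_{\abs{Q}}(\abs{P-Q}) \eqsim \abs{V(P)-V(Q)}^2$ from Lemma~\ref{app:FuncSpace lem:hammer} gives the pointwise equivalence; integrating over $\mathcal{O}$ finishes the argument, all integrals being finite since $u,v \in W^{1,p}_x$ and $S(\varepsilon v)\in L^{p'}_x$.

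The only delicate point is the uniformity of the constants: the equivalence from Lemma~\ref{app:FuncSpace lem:hammer} and the $\Delta_2$-bound must be controlled solely by the characteristics of $\varphi_\kappa$, uniformly in the shift parameter $\abs{Q}$ and in $\kappa$, so that integrating the pointwise relation over $\theta$ and over $\mathcal{O}$ preserves the equivalence rather than accumulating a $\theta$- or $x$-dependent loss. This is exactly what the uniform-convexity setup and the cited $\Delta_2$-lemma guarantee, so beyond this bookkeeping I expect no genuine obstacle.
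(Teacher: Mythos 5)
Your proof is correct, and it takes a somewhat different route from the paper. The paper writes the left-hand side as the second-order Taylor remainder $\int_0^1(1-\theta)\,D^2\mathcal{J}\big(v+\theta(u-v)\big)[u-v,u-v]\dd\theta$, computes the second Gateaux derivative explicitly, and then outsources the key equivalence with $\norm{V(\varepsilon u)-V(\varepsilon v)}_{L^2_x}^2$ to the proof of \cite[Lemma~16]{DieKre08}. You instead stay at first order: you identify the left-hand side as the integral of the pointwise Bregman distance of $\xi\mapsto\varphi_\kappa(\abs{\xi})$, rewrite it as $\int_0^1\theta^{-1}\big(S(Q_\theta)-S(Q)\big):(Q_\theta-Q)\dd\theta$, apply Lemma~\ref{app:FuncSpace lem:hammer} pointwise (the swap of the shift from $\abs{Q_\theta}$ to $\abs{Q}$ is legitimate by the symmetry of $(S(P)-S(Q)):(P-Q)$), and finish with the elementary estimate $\int_0^1\theta^{-1}\psi(\theta t)\dd\theta\eqsim\psi(t)$, whose uniformity rests on $\sup_{\kappa\geq 0}\Delta_2\big(\varphi_\kappa,(\varphi_\kappa)^*\big)<\infty$ and the fact that an iterated shift is again a shift. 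What your approach buys is self-containedness — it avoids both the second Gateaux derivative and the external reference, replacing them with a one-line convexity/$\Delta_2$ computation — at the cost of a little bookkeeping about uniformity of constants, which you correctly flag and which is indeed covered by the cited $\Delta_2$-lemma. I see no gap.
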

\begin{proof}
Due to a Taylor expansion it holds
\begin{align*}
&\mathcal{J}(u) - \mathcal{J}(v) - D \mathcal{J}(v)[u-v] =\int_0^1 (1-\theta) D^2 \mathcal{J}\big(v + \theta (u-v) \big)[u-v, u-v]\dd \theta,
\end{align*}
where the first and second order Gateaux derivatives are given by
\begin{align} \label{eq:1stGateaux}
D \mathcal{J}(u)[v] &= \int_{\mathcal{O}} \frac{\varphi_{\kappa}'(\abs{\varepsilon u})}{\abs{\varepsilon u}} \varepsilon u : \varepsilon v \dd x
\end{align}
and
\begin{align} \label{eq:2ndGateaux}
\begin{aligned}
D^2 \mathcal{J}(u)[v,w] &= \int_{\mathcal{O}} \left( \varphi_{\kappa}''\big(\abs{\varepsilon u}) - \frac{\varphi_{\kappa}'\big( \abs{\varepsilon u} \big)}{\abs{\varepsilon u}} \right) \frac{\varepsilon u}{\abs{\varepsilon u}}: \varepsilon v \frac{\varepsilon u}{\abs{\varepsilon u}}: \varepsilon w \dd x \\
&\quad + \int_{\mathcal{O}}  \frac{\varphi_{\kappa}'\big( \abs{\varepsilon u} \big) }{\abs{\varepsilon u}}  \varepsilon v : \varepsilon w \dd x,
\end{aligned}
\end{align}
respectively. 

Following the proof of~\cite[Lemma~16]{DieKre08} we find
\begin{align} \label{eq:2ndProportion}
\int_0^1 (1-\theta) D^2 \mathcal{J}\big(v + \theta (u-v) \big)[u-v, u-v]\dd \theta \eqsim \norm{V(\varepsilon u) - V(\varepsilon v)}_{L^2_x}^2.
\end{align}
This establishes the claim.
\end{proof}
Clearly, if we insert a minimizer $v = v^*$ in~\eqref{eq:StronglyConvex}, then we recover~\eqref{eq:energyGapOpt}. However, the estimate~\eqref{eq:StronglyConvex} ensures that $\mathcal{J}$ is strongly convex.

In order to identify~\eqref{ch:Stokes intro:p-Stokes-stoch01New} as a perturbed gradient flow, we need to find a pointwise representation of the gradient $D \mathcal{J}(u)$.

\begin{proposition}\label{ch:Stokes prop:GradientFlow}
Let $\mathcal{O}$ be a bounded $C^2$-domain and $u \in W^{1,p}_{0,x}$. Then for all $\xi \in W^{1,p}_{0,\Div}$ 
\begin{align}\label{ch:Stokes eq:GradientFlowRough}
-D \mathcal{J}(u)[\xi] = \langle \Pi_{\Div} \Div S(\varepsilon u), \xi \rangle.
\end{align}

If additionally $\xi \in C^\infty_{0,\Div}$ and $\Pi_{\Div} \Div S(\varepsilon u) \in L^1_{\mathrm{loc}}$, then
\begin{align}\label{ch:Stokes eq:GradientFlow}
-D \mathcal{J}(u)[\xi]  = \int_{\mathcal{O}} \Pi_{\Div} \Div S(\varepsilon u) \cdot \xi  \dd x.
\end{align}
\end{proposition}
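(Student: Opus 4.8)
The plan is to reduce everything to the duality identity~\eqref{eq:dual} of Corollary~\ref{cor:Dual}, applied to the concrete stress $S = S(\varepsilon u)$. First I would check that this is a legitimate argument: since $u \in W^{1,p}_{0,x}$ we have $\varepsilon u \in L^p_x$, and from $\abs{S(\varepsilon u)} = (\kappa + \abs{\varepsilon u})^{p-2}\abs{\varepsilon u} \lesssim (1+\abs{\varepsilon u})^{p-1}$ we obtain $\abs{S(\varepsilon u)}^{p'} \lesssim (1+\abs{\varepsilon u})^{p} \in L^1_x$, so that $S(\varepsilon u) \in L^{p'}_x$ and Corollary~\ref{cor:Dual}\,(2)--(3) apply. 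Hence for every $\xi \in W^{1,p}_{0,\Div}$,
\begin{align*}
\langle \Pi_{\Div} \Div S(\varepsilon u), \xi \rangle = -\int_{\mathcal{O}} S(\varepsilon u) : \nabla \Pi_{\Div} \xi \dd x.
\end{align*}

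The key step is to show that $\Pi_{\Div}$ acts as the identity on the solenoidal test space, i.e.\ $\nabla \Pi_{\Div} \xi = \nabla \xi$ for all $\xi \in W^{1,p}_{0,\Div}$. For $\xi \in C^\infty_{0,\Div}$ this is immediate: such $\xi$ is smooth, compactly supported and divergence-free, hence $\xi \in L^2_{\Div}$, and since by Lemma~\ref{app:FuncSpace lem:Helmholtz} the operator $\Pi_{\Div}$ is the $L^2$-orthogonal projection onto $L^2_{\Div}$, we have $\Pi_{\Div} \xi = \xi$. For general $\xi \in W^{1,p}_{0,\Div}$ I would choose $\xi_k \in C^\infty_{0,\Div}$ with $\xi_k \to \xi$ in $W^{1,p}_x$; then $\nabla \Pi_{\Div} \xi_k = \nabla \xi_k \to \nabla \xi$ in $L^p_x$, while the boundedness of $\nabla \Pi_{\Div} : W^{1,p}_{0,\Div} \to L^p_x$ from Corollary~\ref{cor:Dual}\,(1) yields $\nabla \Pi_{\Div} \xi_k \to \nabla \Pi_{\Div} \xi$ in $L^p_x$, forcing $\nabla \Pi_{\Div} \xi = \nabla \xi$.

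With this in hand the right-hand side becomes $-\int_{\mathcal{O}} S(\varepsilon u) : \nabla \xi \dd x$. Since $\varepsilon u$ is symmetric and $S(\eta) = (\kappa + \abs{\eta})^{p-2}\eta$ preserves symmetry, $S(\varepsilon u)$ is a symmetric matrix field, so it pairs only with the symmetric part of $\nabla \xi$, giving $S(\varepsilon u) : \nabla \xi = S(\varepsilon u) : \varepsilon \xi$. Recalling $S(\eta) = \tfrac{\varphi_{\kappa}'(\abs{\eta})}{\abs{\eta}}\eta$, the first Gateaux derivative~\eqref{eq:1stGateaux} reads $D \mathcal{J}(u)[\xi] = \int_{\mathcal{O}} S(\varepsilon u) : \varepsilon \xi \dd x$, whence $\langle \Pi_{\Div} \Div S(\varepsilon u), \xi \rangle = - D \mathcal{J}(u)[\xi]$, which is~\eqref{ch:Stokes eq:GradientFlowRough}. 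Finally, under the additional hypotheses $\xi \in C^\infty_{0,\Div}$ and $\Pi_{\Div} \Div S(\varepsilon u) \in L^1_{\mathrm{loc}}$, the functional $\Pi_{\Div} \Div S(\varepsilon u)$ is represented by a locally integrable function, so its pairing against the compactly supported smooth field $\xi$ is literally the integral $\int_{\mathcal{O}} \Pi_{\Div} \Div S(\varepsilon u) \cdot \xi \dd x$, giving~\eqref{ch:Stokes eq:GradientFlow}.

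I expect the main subtlety to be the identity $\nabla \Pi_{\Div} \xi = \nabla \xi$ on $W^{1,p}_{0,\Div}$, in particular making sense of $\nabla \Pi_{\Div}$ on this space when $p < 2$, where $\Pi_{\Div}$ is not a priori an $L^p$-operator. This is handled entirely through the density of $C^\infty_{0,\Div}$ together with the continuous extension supplied by Corollary~\ref{cor:Dual}\,(1), rather than by manipulating $\Pi_{\Div} \xi$ directly.
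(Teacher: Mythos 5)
Your proposal is correct and follows essentially the same route as the paper: identify $D\mathcal{J}(u)[\xi]=\int_{\mathcal{O}}S(\varepsilon u):\nabla\xi\dd x$ via \eqref{eq:1stGateaux} and the symmetry of $S(\varepsilon u)$, then pass to $\langle -\Pi_{\Div}\Div S(\varepsilon u),\xi\rangle$ through the duality relation \eqref{eq:dual} and the fact that $\Pi_{\Div}$ acts as the identity on the solenoidal test space. Your extra care in verifying $S(\varepsilon u)\in L^{p'}_x$ and in justifying $\nabla\Pi_{\Div}\xi=\nabla\xi$ by density (rather than simply asserting $\xi\in\mathcal{R}(\Pi_{\Div})$) only makes explicit what the paper leaves implicit, and your treatment of the second claim via the $L^1_{\mathrm{loc}}$ representation is equivalent to the paper's appeal to Riesz representation.
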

\begin{proof}
Let $\xi \in W^{1,p}_{0,\Div}$. Due to~\eqref{eq:1stGateaux}, ~\eqref{app:FuncSpace def:SandV} and the symmetry of $S(\varepsilon u)$ 
\begin{align*}
D \mathcal{J}(u)[\xi] &= \int_{\mathcal{O}} S(\varepsilon u) : \varepsilon \xi \dd x =\int_{\mathcal{O}}  S(\varepsilon u) : \nabla \xi \dd x .
\end{align*}
Next, we use that $\Pi_{\Div}$ is a projection, $\xi \in \mathcal{R}(\Pi_{\Div})$ and~\eqref{eq:dual},
\begin{align*}
\int_{\mathcal{O}}  S(\varepsilon u) : \nabla \xi \dd x = \int_{\mathcal{O}}  S(\varepsilon u) : \nabla \Pi_{\Div} \xi \dd x = \langle - \Pi_{\Div} \Div S(\varepsilon u) ,\xi \rangle.
\end{align*}
This establishes~\eqref{ch:Stokes eq:GradientFlowRough}.

The second part follows by Riesz's representation theorem.
\end{proof}

\begin{remark}
We want to emphasize that in general it is not possible to decouple the projection and the divergence, i.e., $\Pi_{\Div} \Div$ should be understood as one operator (defined by the duality relation~\eqref{eq:dual}) rather than the composition of two.

However, for smooth functions it is exactly the composition of the divergence and the Helmholtz-Leray projection.  
\end{remark}

\section{Temporal regularity of stochastic pressure for weak solutions} \label{sec:RegularityWeak}
The aim of this section is the discuss weak solutions to~\eqref{ch:Stokes intro:p-Stokes-stoch}. In particular, we define the concept of weak solutions and present a sufficient condition on the stochastic data for the existence of a unique weak solution~$(u,\pi)$. Moreover, we split the pressure into two terms. One is related to the diffusion operator~$S(\varepsilon u)$, while the second corresponds to the stochastic data.

\subsection{Weak solutions}
The concept of weak solutions is defined as follows.
\begin{definition} \label{def:weak}
Let $u_0 \in L^2_\omega L^2_{\Div}$ be $\mathcal{F}_0$-measurable. The tupel $(u,\pi)$ is called weak solution to \eqref{ch:Stokes intro:p-Stokes-stoch} if 
\begin{enumerate}
\item $u \in L^2_\omega C_t^0 L^2_{\Div} \cap L^p_\omega L^p_t W^{1,p}_{0,x}$ is $(\mathcal{F}_t)$-adapted,
\item  for all $t \in I$, $\xi \in C^\infty_{0,\Div}$ and $\mathbb{P}$-a.s. it holds
\begin{align} \label{ch:Stokes eq:p-Stokes_weak}
\int_{\mathcal{O}} (u(t) - u_0) \cdot \xi \dx + \int_0^t \int_{\mathcal{O}} S( \varepsilon u) :\nabla \xi \dx \ds = \sum_{j\in \mathbb{N}}  \int_0^t \int_{\mathcal{O}} g_j(\cdot,u)\cdot \xi \dx \dd \beta^j(s),
\end{align}
\item $\pi= \pi_{\mathrm{det}} + \pi_{\mathrm{sto}}$ with $\pi_{\mathrm{det}} \in L^{p'}_\omega L^{p'}_t L^{p'}_{0,x}$, $\pi_{\mathrm{sto}} \in L^{2}_\omega B^{-1/2}_{\Phi_2,\infty} \big( W^{1,2}_x \cap L^2_{0,x}  \big)$
\item and for all $\xi_{t,x} \in C^\infty_0(\mathcal{O}_T)$ and $\mathbb{P}$-a.s.
\begin{align}\label{ch:Stokes eq:p-Stokes_weakPressure}
\langle \pi, \Div \xi_{t,x} \rangle  = \int_{\mathcal{O}_T}  S(\varepsilon u): \nabla \Pi_{\Div}^\perp \xi_{t,x} \dd x \dd t +\int_{\mathcal{O}_T} \mathcal{I}_W\big(G(u)\big)  \partial_t \Pi_{\Div}^\perp \xi_{t,x} \dd x \dd t.
\end{align}
\end{enumerate}
\end{definition}

\subsection{A sufficient condition for weak solutions}
\begin{assumption}\label{ch:Stokes ass:NoiseCoeffweak}
We assume that $\set{g_j}_{j\in \mathbb{N}} \in C^0(\mathcal{O} \times \mathbb{R}^n; \mathbb{R}^n)$ with
\begin{enumerate}
\item (sublinear growth) for all $v \in L^2_{\Div}$
\begin{align}\label{ch:Stokes ass:growthweak}
\norm{G(v)}_{L_2(U;L^2_x)}^2 = \sum_{j\in \mathbb{N}} \norm{g_j\big(\cdot,v(\cdot)\big)}_{L^2_x}^2 \leq c_{\text{growth}}(1+\norm{v}_{L^2_x}^2),
\end{align} 
\item (Lipschitz continuity) for all $v_1, \, v_2 \in L^2_{\Div}$ it holds
\begin{align} \label{ch:Stokes ass:Lipschitzweak}
\norm{G(v_1) - G(v_2)}_{L_2(U;L^2_x)}^2 \leq c_{\text{lip}} \norm{u_1 - u_2}_{L^2_x}^2.
\end{align} 
\end{enumerate}
\end{assumption}

Assumption~\ref{ch:Stokes ass:NoiseCoeffweak} is standard for the derivation of weak solutions. Sometimes one couples the condition on the noise coefficient and the dissipation of the monotone operator $S$. Since we are only interested in regularity of $\pi$, we do not proceed this way.

\subsection{Existence of weak solutions}
The general theory of monotone SPDEs of Liu and Röckner~\cite{LiRo} covers the construction of the velocity variable. A similar construction for weak solutions to power-law fluids has been done in~\cite{Breit2015}. For the sake of completeness, we state the result and comment on the main ingredients.

\begin{theorem} \label{thm:ExistenceWeek}
Let $p \in (1,\infty)$, $q \in [1,\infty)$, $\mathcal{O}$ be open, bounded, Assumption~\ref{ch:Stokes ass:NoiseCoeffweak} be satisfied and $u_0 \in L^q_\omega L^2_{\Div}$ be $\mathcal{F}_0$-measurable. \\
Then there exists $u \in L^q_\omega C^0_t L^2_{\Div} \cap L^{p q/2}_\omega L^{p q/2}_t W^{1,p}_{0,x} $ adapted to $(\mathcal{F}_t)$ such that $u$ satisfies~\eqref{ch:Stokes eq:p-Stokes_weak} and 
\begin{align}\label{eq:L2Ito}
\mathbb{E}\left[ \sup_{t\in I} \norm{u(t)}_{L^2_x}^q + \left( \int_I \mathcal{J}\big(u(t) \big) \dd t \right)^{q/2} \right] \lesssim \norm{u_0}_{L^q_\omega L^2_x}^q + 1.
\end{align}
\end{theorem}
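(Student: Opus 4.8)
The plan is to recast the velocity equation \eqref{ch:Stokes intro:p-Stokes-stoch01New} as an abstract monotone stochastic evolution equation on the Gelfand triple
\[
V := W^{1,p}_{0,\Div} \hookrightarrow H := L^2_{\Div} \hookrightarrow V^{*} = \big( W^{1,p}_{0,\Div} \big)',
\]
with pivot space $H$, and to invoke the variational existence and uniqueness theory of Liu and R\"ockner \cite{LiRo,MR3410409}. Concretely, I would introduce the drift $A\colon V \to V^{*}$ through $\langle A(u),\xi\rangle := -D\mathcal{J}(u)[\xi] = -\int_{\mathcal{O}} S(\varepsilon u):\varepsilon \xi \dx$, which by Proposition~\ref{ch:Stokes prop:GradientFlow} represents $\Pi_{\Div}\Div S(\varepsilon u)$, together with the diffusion coefficient $B(u) := \Pi_{\Div} G(u)$. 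With this notation \eqref{ch:Stokes eq:p-Stokes_weak} is the weak form of $\dd u = A(u)\dt + B(u)\dd W$ in $V^{*}$, and since $C^\infty_{0,\Div}$ is dense in $V$ a variational solution automatically satisfies the tested identity against $\xi \in C^\infty_{0,\Div}$.

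The core of the argument is the verification of the four structural hypotheses of the variational framework. \emph{Hemicontinuity} of $s \mapsto \langle A(u+sv),w\rangle$ follows from the continuity and $p$-growth of $S$ by dominated convergence. \emph{Monotonicity} is immediate from Lemma~\ref{app:FuncSpace lem:hammer}, since $\langle A(u)-A(v),u-v\rangle = -\int_{\mathcal{O}} \big( S(\varepsilon u)-S(\varepsilon v)\big):(\varepsilon u - \varepsilon v) \dx \eqsim -\norm{V(\varepsilon u)-V(\varepsilon v)}_{L^2_x}^2 \leq 0$, while $\norm{B(u)-B(v)}_{L_2(U;H)}^2 \leq \norm{G(u)-G(v)}_{L_2(U;L^2_x)}^2 \lesssim \norm{u-v}_{H}^2$ by the contractivity of $\Pi_{\Div}$ and \eqref{ch:Stokes ass:Lipschitzweak}. \emph{Coercivity} uses $\langle A(u),u\rangle = -\int_{\mathcal{O}} \abs{V(\varepsilon u)}^2 \dx \eqsim -\mathcal{J}(u)$, Korn's inequality and the $p$-growth of $\varphi_{\kappa}$ to produce $2\langle A(u),u\rangle + \norm{B(u)}_{L_2(U;H)}^2 \lesssim -c\norm{u}_V^p + C\big(1+\norm{u}_H^2\big)$, the last term being controlled by \eqref{ch:Stokes ass:growthweak}. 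Finally, \emph{boundedness} follows from $\abs{S(\xi)} \leq (\kappa+\abs{\xi})^{p-1}$ and the identity $(p-1)p'=p$, giving $\norm{A(u)}_{V^{*}}^{p'} \lesssim 1 + \norm{u}_V^p$. Applying the abstract theorem then yields a unique adapted $u$ in $L^2_\omega C^0_t H \cap L^p(\Omega\times I;V)$ satisfying \eqref{ch:Stokes eq:p-Stokes_weak} together with the It\^o formula for $\norm{u}_H^2$.

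To upgrade this to the moment estimate \eqref{eq:L2Ito}, I would apply It\^o's formula to $\norm{u(t)}_{L^2_x}^2$. Using $\int_{\mathcal{O}} \abs{V(\varepsilon u)}^2 \dx \eqsim \mathcal{J}(u)$ for the drift and \eqref{ch:Stokes ass:growthweak} for the trace of the quadratic variation, one arrives at
\[
\norm{u(t)}_{L^2_x}^2 + c\int_0^t \mathcal{J}(u) \ds \leq \norm{u_0}_{L^2_x}^2 + C\int_0^t \big(1+\norm{u}_{L^2_x}^2\big) \ds + 2\int_0^t \big(B(u)\dd W, u\big)_H.
\]
Raising to the power $q/2$, taking the supremum in $t$ and then the expectation, the martingale term is handled by the Burkholder--Davis--Gundy inequality followed by Young's inequality, which isolates a term $\tfrac12 \mathbb{E}[\sup_t \norm{u}_{L^2_x}^q]$ that is absorbed into the left-hand side after a standard stopping-time localization. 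A Gronwall argument in $t$ then closes the estimate and gives $\mathbb{E}[\sup_t \norm{u}_{L^2_x}^q] \lesssim \mathbb{E}[\norm{u_0}_{L^2_x}^q] + 1$; reinserting this bound controls the dissipation term and produces \eqref{eq:L2Ito}. The claimed $W^{1,p}_{0,x}$-integrability of $u$ is finally read off from the resulting control of $\mathbb{E}(\int_I \mathcal{J}(u)\dt)^{q/2}$ via Korn's inequality and the $p$-growth of $\varphi_\kappa$.

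The hard part is not any single estimate but the structural setup: the continuous and dense embedding $V \hookrightarrow H$ underpinning the whole triple holds by Sobolev embedding only for $p \geq 2n/(n+2)$, so covering the full range $p \in (1,\infty)$ requires the refined monotone-SPDE frameworks \cite{2022arXiv220601107R,2022arXiv220600230A} that relax this compatibility condition. A secondary delicate point is the rigorous justification of the It\^o expansion of $\norm{\cdot}_{L^2_x}^2$ in the variational setting and the localization needed to legitimately absorb the supremum before invoking Gronwall, in particular for small moments $q < 2$.
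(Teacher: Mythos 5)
Your proposal follows essentially the same route as the paper: cast \eqref{ch:Stokes intro:p-Stokes-stoch01New} as a monotone stochastic evolution equation in a Gelfand triple, verify the Liu--R\"ockner hypotheses (the paper points to \cite[Example 4.1.9]{MR3410409} for exactly the hemicontinuity/monotonicity/coercivity/boundedness checks you carry out), and obtain \eqref{eq:L2Ito} from It\^o's formula for the $L^2_x$-norm combined with Burkholder--Davis--Gundy and Gronwall. The one substantive difference is how the full range $p\in(1,\infty)$ is handled. You take $V=W^{1,p}_{0,\Div}$ and correctly observe that $V\hookrightarrow H=L^2_{\Div}$ then fails for $p<2n/(n+2)$, but you resolve this by appealing to the refined monotone-SPDE frameworks of \cite{2022arXiv220601107R,2022arXiv220600230A}. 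The paper instead uses the more elementary device of redefining the triple as $W^{1,p}_{0,x}\cap L^2_{\Div}\hookrightarrow L^2_{\Div}\hookrightarrow\big(W^{1,p}_{0,x}\cap L^2_{\Div}\big)'$, so that the embedding $V\hookrightarrow H$ holds trivially for every $p\in(1,\infty)$ (density follows since $C^\infty_{0,\Div}\subset V$ is dense in $L^2_{\Div}$); no refined framework is needed, at the mild cost of carrying the intersection norm through the coercivity and boundedness estimates. Your treatment of the moments --- It\^o for $\norm{u}_{L^2_x}^2$ followed by raising to the power $q/2$ --- is a legitimate variant of the paper's ``It\^o's formula for $F(u)=\norm{u}_{L^2_x}^q$'' and is in fact the safer formulation for $q<2$, where $x\mapsto\abs{x}^{q}$ is not $C^2$; the paper defers these details to \cite[Theorem~4.3.1]{wich_dis}.
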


The abstract conditions postulated by Liu and R\"ockner, that guarantee the existence of a weak solution, can be verified along the lines of~\cite[Example 4.1.9]{MR3410409}. In fact, \cite[Example 4.1.9]{MR3410409} only deals with the case $p \geq 2$. However, the general case $p \in (1,\infty)$ can be recovered, if we adjust the underlying Gelfand triple to
\begin{align*}
W^{1,p}_{0,x} \cap L^2_{\Div} \hookrightarrow L^2_{\Div} \hookrightarrow \big( W^{1,p}_{0,x} \cap L^2_{\Div} \big)'.
\end{align*}

The key tools for the a priori bound~\eqref{eq:L2Ito} are It\^o's formula for $F(u) = \norm{u}_{L^2_x}^q$ together with a Gronwall argument. While the case $q=2$ is standard, the general case needs some minor changes. Since this is not the main focus of the article, we only refer to~\cite[Theorem~4.3.1]{wich_dis} for more details. 
%
%Due to the strong convexity~\eqref{eq:StronglyConvex} and $\mathcal{J}(0) = 0$, we find
%\begin{align*}
%\mathcal{J}(u) \lesssim D\mathcal{J}(u)[u].
%\end{align*}
%Therefore, the energy naturally provides a lower bound for the diffusion. 

\begin{remark}
The integrability in probability of the solution $u$ is purely determined by the integrability in probability of the initial condition $u_0$, i.e., for all $q \in [1,\infty)$
\begin{align*}
u_0 \in L^q_\omega L^2_x \quad \Rightarrow \quad  u \in L^q_\omega C^0_t L^2_x.
\end{align*} 
The limit case $q = \infty$ needs to be excluded, since the moment transfer is hindered by the Wiener process~$W$. The most one can hope for is $u \in L^{\Phi_2}_\omega L^2_x$, since $W \in L_\omega^{\Phi_2} \backslash L^\infty_\omega $. More details about integrability of Brownian motions in Banach spaces can be found in~\cite{VerHyt08}.
\end{remark}

\subsection{Reconstruction of the pressure} \label{sec:Pressure}
Thanks to the Helmholtz-Leray projection~$\Pi_{\Div}$ it is possible to target the construction of the velocity~$u$ and the pressure~$\pi$ individually. The pressure corresponds to the residual error when testing~\eqref{ch:Stokes eq:p-Stokes_weak} by smooth gradients rather than smooth divergence free fields, i.e., we use~\eqref{ch:Stokes eq:p-Stokes_weakPressure} as a definition of the pressure~$\pi$. Additionally, we artificially decompose $\pi = \pi_{\mathrm{det}} + \pi_{\mathrm{sto}} $ into the components
\begin{subequations}
\begin{align} \label{def:piDet}
\nabla \pi_{\mathrm{det}} &= \Pi_{\mathrm{div}}^{\perp} \Div S(\varepsilon u), \\ \label{def:piSto}
\nabla \pi_{\mathrm{sto}} &= \Pi_{\mathrm{div}}^{\perp} G(u) \dot{W},
\end{align} 
\end{subequations}
and discuss them separately.

First, we have a look at the deterministic component.
\begin{lemma} \label{lem:PressureDet}
Let $\mathcal{O}$ be a bounded $C^2$ domain and $ u \in W^{1,p}_{0,x}$. Then there exists $\pi_{\mathrm{det}} \in L^{p'}_{0,x}$ such that for all $\xi \in C^\infty_{0,x}$
\begin{align} \label{ch:Stokes eq:PressureDetFinal}
\int_{\mathcal{O}} \pi_\mathrm{det} \Div \xi \dd x = \int_{\mathcal{O}}  S(\varepsilon u): \nabla \Pi_{\Div}^\perp \xi \dd x 
\end{align}
and
\begin{align} \label{eq:estimatePiDet}
\norm{\pi_{\mathrm{det}}}_{L^{p'}_x} \lesssim \norm{S(\varepsilon u)}_{L^{p'}_x}.
\end{align}
\end{lemma}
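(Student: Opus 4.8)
The plan is to realize the right-hand side of~\eqref{ch:Stokes eq:PressureDetFinal} as a bounded linear functional on the space of mean-value free fields $L^p_{0,x}$ and then to invoke $L^p$--duality to produce the representing pressure $\pi_{\mathrm{det}}$ together with its bound. Throughout I use that $S(\varepsilon u) \in L^{p'}_x$, which is clear from $u \in W^{1,p}_{0,x}$ and the growth $\abs{S(\xi)} \lesssim (\kappa + \abs{\xi})^{p-1}$.

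First I would record that the functional factorizes through the divergence. For $\xi \in C^\infty_{0,x}$ the gradient part $\Pi_{\Div}^\perp \xi = \nabla G_\xi$ is governed by the Neumann problem~\eqref{app:FuncSpace def:WeakLaplace02}: since $\xi$ has compact support its normal trace vanishes, so $G_\xi$ is the weak solution of $\Delta G_\xi = \Div \xi$ with homogeneous Neumann data, and therefore depends only on $g := \Div \xi$. Consequently the map
\begin{align*}
\Lambda(\xi) := \int_{\mathcal{O}} S(\varepsilon u): \nabla \Pi_{\Div}^\perp \xi \dd x
\end{align*}
descends to a well-defined linear functional $\tilde{\Lambda}$ on $\set{\Div \xi : \xi \in C^\infty_{0,x}} \subset L^p_{0,x}$ via $\tilde{\Lambda}(g) := \Lambda(\xi)$; note $g$ is automatically mean-value free since $\mean{\Div \xi}_{\mathcal{O}} = 0$. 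This factorization is the only genuinely delicate point, and it is exactly the content of the Neumann characterization~\eqref{app:FuncSpace def:WeakLaplace02} combined with the vanishing normal trace of compactly supported fields.

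Next I would establish boundedness with the correct constant. By H\"older's inequality together with the gradient stability of the Helmholtz--Leray projection (Theorem~\ref{app:FuncSpace thm:GradientStabilityHelm} with $k=1$, applicable since $\mathcal{O}$ is a $C^2$-domain),
\begin{align*}
\abs{\tilde{\Lambda}(g)} \leq \norm{S(\varepsilon u)}_{L^{p'}_x} \norm{\nabla \Pi_{\Div}^\perp \xi}_{L^p_x} \lesssim \norm{S(\varepsilon u)}_{L^{p'}_x} \norm{\Div \xi}_{L^p_x} = \norm{S(\varepsilon u)}_{L^{p'}_x} \norm{g}_{L^p_x}.
\end{align*}
The Bogovskii operator (Theorem~\ref{app:Aux thm:Bogovskii}) shows that $\Div$ maps $C^\infty_{0,x}$ onto a dense subset of $L^p_{0,x}$: indeed $\Div \mathcal{B} g = g$ for every $g \in L^p_{0,x}$, and $C^\infty_{0,x}$ is dense in $W^{1,p}_{0,x}$. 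Hence $\tilde{\Lambda}$ extends uniquely to a bounded functional on all of $L^p_{0,x}$ with $\norm{\tilde{\Lambda}} \lesssim \norm{S(\varepsilon u)}_{L^{p'}_x}$.

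Finally, since $L^p_{0,x}$ is a closed subspace of $L^p_x$ whose annihilator in $L^{p'}_x$ consists precisely of the constants, its dual can be identified (up to equivalence of norms on the bounded domain $\mathcal{O}$) with the mean-value free functions $L^{p'}_{0,x}$. Thus there exists a unique $\pi_{\mathrm{det}} \in L^{p'}_{0,x}$ with $\tilde{\Lambda}(g) = \int_{\mathcal{O}} \pi_{\mathrm{det}}\, g \dd x$ for all $g \in L^p_{0,x}$, and $\norm{\pi_{\mathrm{det}}}_{L^{p'}_x} \lesssim \norm{\tilde{\Lambda}} \lesssim \norm{S(\varepsilon u)}_{L^{p'}_x}$, which is~\eqref{eq:estimatePiDet}. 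Specializing $g = \Div \xi$ for $\xi \in C^\infty_{0,x}$ and unwinding the factorization yields~\eqref{ch:Stokes eq:PressureDetFinal}, completing the proof.
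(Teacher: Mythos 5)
Your proof is correct and follows essentially the same route as the paper: H\"older's inequality plus the $W^{1,p}$-stability of the Helmholtz--Leray projection to bound the functional, the Bogovskii operator to reach a dense subset of $L^p_{0,x}$, and $L^p$-duality to represent $\pi_{\mathrm{det}}$. The one point you make explicit that the paper leaves implicit is that the right-hand side of~\eqref{ch:Stokes eq:PressureDetFinal} depends on $\xi$ only through $\Div \xi$ (via the Neumann characterization~\eqref{app:FuncSpace def:WeakLaplace02}), which is indeed what is needed to pass from test functions of the special form $\mathcal{B}\xi_0$ back to arbitrary $\xi \in C^\infty_{0,x}$.
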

\begin{proof}
We interpret~\eqref{def:piDet} distributionally, i.e., for $\xi \in C^\infty_{0,x}$
\begin{align} \label{ch:Stokes eq:PressureIdentify}
\langle \pi_{\mathrm{det}}, \Div \xi \rangle :=  \int_{\mathcal{O}}  S(\varepsilon u): \nabla \Pi_{\Div}^\perp \xi \dd x .
\end{align}
Without loss of generality we assume $\mean{\pi_{\mathrm{det}}}_{\mathcal{O}} = 0$.

Since we want to identify $\pi_{\mathrm{det}}$ as a proper function, we substitute $\xi = \mathcal{B} \xi_0$, where $\mathcal{B}$ denotes the Bogovskii operator (see Theorem~\ref{app:Aux thm:Bogovskii}) and $\xi_0 \in C^\infty_{0,x}$ with $\mean{\xi_0}_{\mathcal{O}} = 0$, to obtain
\begin{align} \label{eq:001}
\langle \pi_{\mathrm{det}}, \xi_0 \rangle = \int_{\mathcal{O}}  S(\varepsilon u): \nabla \Pi_{\Div}^\perp \mathcal{B} \xi_0 \dd x .
\end{align}
Thus, using the symmetry of $\Pi_{\Div}^\perp$ and integration by parts,~\eqref{eq:001} is equivalent to
\begin{align} \label{eq:002}
\pi_{\mathrm{det}} = -\mathcal{B}^* \Pi_{\Div}^\perp \Div S(\varepsilon u),
\end{align}
where $\mathcal{B}^*$ is the adjoint operator of $\mathcal{B}$. 

It remains to verify the boundedness of 
\begin{align*}
A:= -\mathcal{B}^* \Pi_{\Div}^\perp \Div : L^{p'}_{0,x} \to L^{p'}_{0,x} .
\end{align*}
We instead address the boundedness of the adjoint operator and use a duality argument.

The right hand side of~\eqref{eq:001} is estimated by H\"older's inequality
\begin{align*}
\int_{\mathcal{O}}  S(\varepsilon u): \nabla \Pi_{\Div}^\perp \mathcal{B} \xi_0 \dd x \leq  \norm{S(\varepsilon u)}_{L_x^{p'}}  \,\bignorm{ \nabla \Pi_{\Div}^\perp \mathcal{B} \xi_0 }_{L^p_x}.
\end{align*}
We further estimate, due to the Sobolev stability of the Helmholtz-Leray projection (Theorem~\ref{app:FuncSpace thm:GradientStabilityHelm}) and the Bogovskii operator (Theorem~\ref{app:Aux thm:Bogovskii} with $s = 0$, $q = p$), 
\begin{align*}
\bignorm{ \nabla \Pi_{\Div}^\perp \mathcal{B} \xi_0 }_{L^p_x} \lesssim \bignorm{ \nabla \mathcal{B} \xi_0 }_{L^p_x} \lesssim \bignorm{\xi_0 }_{L^p_x}.
\end{align*}
Therefore, the linear operator $A : L^p_{0,x} \to L^p_{0,x}$ is bounded.

The claim~\eqref{ch:Stokes eq:PressureDetFinal} follows by our construction. The inequality~\eqref{eq:estimatePiDet} is verified by a density argument
\begin{align*}
\norm{\pi_{\mathrm{det}}}_{L^{p'}_{x}} = \sup_{ \xi \in L^p_{0,x}} \frac{\langle \pi_{\mathrm{det}}, \xi \rangle}{\norm{\xi}_{L^p_x}} =  \sup_{ \xi \in C^\infty_{0,x}, \mean{\xi}_{\mathcal{O}} = 0 } \frac{\langle \pi_{\mathrm{det}}, \xi \rangle}{\norm{\xi}_{L^p_x}} \lesssim \norm{S(\varepsilon u)}_{L_x^{p'}}.
\end{align*}
\end{proof}

Second, we investigate the stochastic pressure. The main difficulty is the limited time regularity. Therefore, we initially take a look at the time integrated pressure.

\begin{lemma} \label{lem:integratedPressure}
Let $\mathcal{O}$ be a bounded domain with locally Lipschitz boundary and $\mathcal{I}_W\big(G(u) \big) \in L^2_\omega B^{1/2}_{\Phi_2,\infty} L^2_x$. Then there exists $\mathcal{K}_{\mathrm{sto}} \in L^2_\omega B^{1/2}_{\Phi_2,\infty} \big( W^{1,2}_{0,x} \cap L^2_{0,x}\big) $ such that for all $t \in I$ and $\xi \in C^\infty_{0,x}$ it holds
\begin{align} \label{eq:Ksto}
\int_{\mathcal{O}} \mathcal{K}_{\mathrm{sto}}(t) \Div \xi \dd x &= - \int_{\mathcal{O}}  \mathcal{I}_W \big( G(u) \big)(t) \Pi_{\Div}^\perp \xi \dd x.
\end{align}
Moreover,
\begin{align} \label{eq:Estimate01}
\norm{\mathcal{K}_{\mathrm{sto}}}_{L^2_\omega B^{1/2}_{\Phi_2,\infty} W^{1,2}_x} \lesssim \norm{\mathcal{I}_W \big( G(u) \big)}_{L^2_\omega B^{1/2}_{\Phi_2,\infty} L^2_x}.
\end{align}
\end{lemma}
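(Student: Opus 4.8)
The plan is to mirror the deterministic reconstruction of Lemma~\ref{lem:PressureDet}, the decisive observation being that $\mathcal{K}_{\mathrm{sto}}$ is obtained from $F := \mathcal{I}_W\big(G(u)\big)$ by applying a \emph{fixed, time-independent} bounded linear spatial operator pointwise in time; temporal regularity then transfers automatically because such an operator commutes with the difference quotients defining the Besov--Orlicz norm. To guess the operator, I would test the desired identity~\eqref{eq:Ksto} against $\xi = \mathcal{B}\xi_0$, where $\mathcal{B}$ is the Bogovskii operator of Theorem~\ref{app:Aux thm:Bogovskii} and $\xi_0 \in C^\infty_{0,x}$ with $\mean{\xi_0}_{\mathcal{O}} = 0$. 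Since $\Div \mathcal{B}\xi_0 = \xi_0$ and $\Pi_{\Div}^\perp$ is $L^2$-self-adjoint (Lemma~\ref{app:FuncSpace lem:Helmholtz}), this forces the pointwise representation
\begin{align*}
\mathcal{K}_{\mathrm{sto}}(t) = \mathcal{M} F(t), \qquad \mathcal{M} := - \mathcal{B}^* \Pi_{\Div}^\perp,
\end{align*}
up to an additive constant that I fix by $\mean{\mathcal{K}_{\mathrm{sto}}(t)}_{\mathcal{O}} = 0$, so that $\mathcal{K}_{\mathrm{sto}}(t) \in L^2_{0,x}$. To check that this definition satisfies~\eqref{eq:Ksto} for \emph{all} $\xi \in C^\infty_{0,x}$ rather than only the Bogovskii test functions, I would note that $\xi\cdot\eta = 0$ on $\partial\mathcal{O}$ forces $\Pi_{\Div}^\perp \xi = \nabla\Delta^{-1}\Div\xi$ to depend on $\xi$ only through $\Div\xi$; hence both sides of~\eqref{eq:Ksto} depend on $\xi$ only through $\Div\xi$, and the Bogovskii functions already exhaust all admissible (mean-zero) divergences.

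The quantitative core is the spatial mapping property $\mathcal{M} : L^2_x \to W^{1,2}_{0,x}$. Boundedness of $\Pi_{\Div}^\perp$ on $L^2_x$ is immediate from Lemma~\ref{app:FuncSpace lem:Helmholtz}. For $\mathcal{B}^*$ I would dualize: for $\psi \in C^\infty_{0,x}$,
\begin{align*}
\abs{\langle \mathcal{K}_{\mathrm{sto}}(t), \psi\rangle} = \abs{\langle \Pi_{\Div}^\perp F(t), \mathcal{B}\psi\rangle} \leq \norm{F(t)}_{L^2_x}\,\norm{\mathcal{B}\psi}_{L^2_x} \lesssim \norm{F(t)}_{L^2_x}\,\norm{\psi}_{W^{-1,2}_x},
\end{align*}
where the last step invokes Theorem~\ref{app:Aux thm:Bogovskii} with $s=-1$ and $q=2$, admissible precisely because $-1 > -2 + 1/2$, to realize $\mathcal{B} : W^{-1,2}_x \to L^2_x$. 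Taking the supremum over $\psi$ yields $\norm{\mathcal{K}_{\mathrm{sto}}(t)}_{W^{1,2}_x} \lesssim \norm{F(t)}_{L^2_x}$ uniformly in $t$. This single gained derivative — absent in Lemma~\ref{lem:PressureDet}, where the divergence acting on $S(\varepsilon u)$ consumes it — is exactly what makes the time-integrated object $\mathcal{K}_{\mathrm{sto}}$ one derivative smoother than the genuine stochastic pressure.

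For the temporal transfer, linearity and time-independence of $\mathcal{M}$ give $\tau_h(\mathcal{M} F) = \mathcal{M}(\tau_h F)$, so the pointwise-in-time bound $\norm{\tau_h\mathcal{K}_{\mathrm{sto}}(s)}_{W^{1,2}_x} \lesssim \norm{\tau_h F(s)}_{L^2_x}$ holds. Because $\Phi_2$ is increasing, this pointwise domination propagates through the Luxemburg norm in~\eqref{def:Besov01}, giving $\norm{\tau_h\mathcal{K}_{\mathrm{sto}}}_{L^{\Phi_2}(\cdots;W^{1,2}_x)} \lesssim \norm{\tau_h F}_{L^{\Phi_2}(\cdots;L^2_x)}$, and likewise for the $L^{\Phi_2}(I;\cdot)$ term; weighting by $h^{-1/2}$ and taking the $\esssup$ over $h$ transfers the $B^{1/2}_{\Phi_2,\infty}$ seminorm pathwise, and applying $L^2_\omega$ yields~\eqref{eq:Estimate01}. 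I expect the main obstacle to be the spatial mapping property of the second paragraph — specifically the index bookkeeping in the Bogovskii duality that produces the derivative gain — whereas, once $\mathcal{M} : L^2_x \to W^{1,2}_{0,x}$ is established, the regularity transfer is a formal consequence of $\mathcal{M}$ being bounded, linear and acting only in space.
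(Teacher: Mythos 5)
Your proposal is correct and follows essentially the same route as the paper: identify $\mathcal{K}_{\mathrm{sto}}(t) = -\mathcal{B}^*\Pi_{\Div}^\perp \mathcal{I}_W(G(u))(t)$, prove $\mathcal{B}^*: L^2_{0,x}\to W^{1,2}_{0,x}\cap L^2_{0,x}$ by duality against $\mathcal{B}: W^{-1,2}_x \to L^2_x$ (Theorem~\ref{app:Aux thm:Bogovskii} with $s=-1$, $q=2$), combine with the $L^2_x$-stability of $\Pi_{\Div}^\perp$, and transfer temporal regularity because the operator is a fixed bounded linear map acting only in space. Your justification that testing with Bogovskii functions suffices (both sides of~\eqref{eq:Ksto} depend on $\xi$ only through $\Div\xi$) is slightly more explicit than the paper's, but the argument is the same.
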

\begin{proof}
The proof proceeds similar to the one of Lemma~\ref{lem:PressureDet}.

First, note~\eqref{eq:Ksto} is equivalent to
\begin{align} 
\mathcal{K}_{\mathrm{sto}}(t) := -\mathcal{B}^* \Pi_{\Div}^\perp \mathcal{I}_W\big( G(u) \big)(t),
\end{align}
where $\mathcal{B}^*$ is the adjoint of the Bogovskii operator. In particular, $\mathcal{K}_{\mathrm{sto}}$ is mean-value free.

Due to Theorem~\ref{app:Aux thm:Bogovskii}, one can check that $ \mathcal{B}^*: L^2_{0,x} \to W^{1,2}_{0,x} \cap L^2_{0,x}$ is bounded. Indeed, let $\xi, \zeta \in C^\infty_{0,x}$ such that $\mean{\xi}_{\mathcal{O}} = \mean{\zeta}_{\mathcal{O}} = 0$, then by H\"older's inequality and Theorem~\ref{app:Aux thm:Bogovskii} with $s = -1$ and $q = 2$,
\begin{align*}
\int_{\mathcal{O}} \mathcal{B}^* \xi  \zeta \dd x = \int_{\mathcal{O}}  \xi \cdot \mathcal{B} \zeta \dd x \leq \norm{\xi}_{L^2_x} \norm{\mathcal{B} \zeta}_{L^2_x} \lesssim \norm{\xi}_{L^2_x} \norm{ \zeta}_{W^{-1,2}_{x}}.
\end{align*}
Thus, using the density of smooth mean-value free functions within $W^{1,2}_{0,x} \cap L^2_{0,x}$,
\begin{align} \label{eq:0A}
\norm{\mathcal{B}^* \xi }_{W^{1,2}_{x}} \lesssim \norm{\xi}_{L^2_x}.
\end{align}

Finally,~\eqref{eq:0A} together with $L^2_x$-stability of the Helmholtz-Leray projection establish
\begin{align*}
\norm{\mathcal{K}_{\mathrm{sto}}(t)}_{W^{1,2}_x} \lesssim \norm{\Pi_{\Div}^\perp  \mathcal{I}_W \big( G(u) \big)(t) }_{L^2_x} \leq \norm{\mathcal{I}_W \big( G(u) \big)(t) }_{L^2_x}.
\end{align*}
An application of the $L^2_\omega B^{1/2}_{\Phi_2,\infty}$-norm verifies~\eqref{eq:Estimate01}. 
\end{proof}

Now, we have identified the regularity class for the integrated pressure. The next step is the transfer of the results to the pressure. For a smooth test function $\xi \in C^\infty_0\big( [0,T); C^\infty_{0,x} \big)$ we define the distribution
\begin{align}\label{eq:pisto}
\langle \pi_{\mathrm{sto}}, \xi \rangle := -\int_I \langle \mathcal{K}_{\mathrm{sto}}, \partial_t \xi \rangle \dd t.
\end{align}
To shorten the notation we abbreviate $X = W^{1,2}_{0,x} \cap L^2_{0,x}$.

\begin{lemma} \label{lem:pressureEstimate}
Let $\alpha \in (0,1)$ and $p,q \in (1,\infty)$. Moreover, assume that $\mathcal{K}_{\mathrm{sto}} \in L^2_\omega B^{\alpha}_{p,q} X$. Then $\pi_{\mathrm{sto}} \in L^2_\omega B^{\alpha-1}_{p,q} X$ and
\begin{align} \label{eq:pressureEstimate}
\norm{\pi_{\mathrm{sto}}}_{L^2_\omega B^{\alpha -1}_{p,q} X} \lesssim \norm{\mathcal{K}_{\mathrm{sto}}}_{L^2_\omega B^{\alpha}_{p,q} X}.
\end{align}
\end{lemma}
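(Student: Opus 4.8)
The plan is to show that the distributional time derivative relation \eqref{eq:pisto} defines $\pi_{\mathrm{sto}}$ as an element of $B^{\alpha-1}_{p,q}X$ by recognizing that $\pi_{\mathrm{sto}} = \partial_t \mathcal{K}_{\mathrm{sto}}$ in the sense of distributions, and that temporal differentiation maps $B^{\alpha}_{p,q}X$ boundedly into $B^{\alpha-1}_{p,q}X$ with norm control. Since $X = W^{1,2}_{0,x}\cap L^2_{0,x}$ is reflexive (being a closed subspace of the Hilbert space $W^{1,2}_x$), the negative-order Besov space $B^{\alpha-1}_{p,q}X$ is well-defined as the space of distributions whose action on test functions is controlled by the Besov norm on the dual side, per the duality convention fixed in Section~\ref{sec:Function spaces}. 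I would work $\omega$-wise first, establishing the deterministic bound $\norm{\partial_t v}_{B^{\alpha-1}_{p,q}X} \lesssim \norm{v}_{B^{\alpha}_{p,q}X}$ for fixed time profiles $v$, and only at the end apply the $L^2_\omega$-norm.

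First I would record that, by \eqref{eq:pisto}, for every $\xi \in C^\infty_0([0,T);C^\infty_{0,x})$ the pairing $\langle \pi_{\mathrm{sto}},\xi\rangle = -\int_I \langle \mathcal{K}_{\mathrm{sto}},\partial_t\xi\rangle\dt$ is exactly the distributional derivative $\partial_t \mathcal{K}_{\mathrm{sto}}$ tested against $\xi$. So the statement reduces to the purely functional-analytic claim that $\partial_t\colon B^{\alpha}_{p,q}(I;X) \to B^{\alpha-1}_{p,q}(I;X)$ is bounded. The cleanest route to this is via the definition of the negative Besov norm as a dual norm: for $v \in B^\alpha_{p,q}X$ one has
\begin{align*}
\norm{\partial_t v}_{B^{\alpha-1}_{p,q}X} = \sup_{\xi}\frac{\abs{\langle \partial_t v,\xi\rangle}}{\norm{\xi}_{B^{1-\alpha}_{p',q'}X'}} = \sup_{\xi}\frac{\abs{\langle v,\partial_t\xi\rangle}}{\norm{\xi}_{B^{1-\alpha}_{p',q'}X'}},
\end{align*}
where the supremum runs over smooth test profiles. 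The numerator is bounded by $\norm{v}_{B^{\alpha}_{p,q}X}\norm{\partial_t\xi}_{B^{-\alpha}_{p',q'}X'}$ using the duality pairing between $B^\alpha_{p,q}X$ and its dual, and then the key differentiation estimate $\norm{\partial_t\xi}_{B^{-\alpha}_{p',q'}X'} \lesssim \norm{\xi}_{B^{1-\alpha}_{p',q'}X'}$ closes the loop. This latter estimate is the same boundedness statement for $\partial_t$ one order higher, so I would prove the differentiation bound once in the form $\partial_t\colon B^\beta_{s,r}Y \to B^{\beta-1}_{s,r}Y$ at the vector-valued level.

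The heart of the argument, and the step I expect to be the main obstacle, is establishing that $\partial_t$ lowers the Besov differentiability index by exactly one with norm control, in the vector-valued Banach-space setting and across the full range of parameters including $q=\infty$ and negative target index. The natural way to see this is through the difference-quotient characterization \eqref{def:Besov01}: the finite differences $\tau_h$ commute with $\partial_t$, and morally $\tau_h v \approx h\,\partial_t v$ for smooth $v$, so weighting by $h^{-r\alpha}$ against the integrating measure $\dd h/h$ produces precisely the index shift. Rather than manipulate difference quotients directly at negative order, it is cleaner to invoke a standard lifting/mapping property: differentiation is a Fourier multiplier of order one after extending functions from $I$ to $\mathbb{R}$ via a bounded extension operator on the $X$-valued Besov scale, and such multipliers act boundedly between the corresponding Besov spaces with the expected index shift; this holds for arbitrary Banach target $X$ since the Besov norm is defined through translations rather than any Hilbert structure. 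I would cite the relevant vector-valued Besov-space mapping properties (e.g.\ from the references on function spaces in the appendix) for this, taking care that $X$ being merely a Banach space is no obstruction because the definition in \eqref{def:Besov01} uses only the norm of $X$.

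Finally, I would assemble the pieces: apply the $\omega$-wise deterministic bound $\norm{\pi_{\mathrm{sto}}(\omega)}_{B^{\alpha-1}_{p,q}X}\lesssim \norm{\mathcal{K}_{\mathrm{sto}}(\omega)}_{B^\alpha_{p,q}X}$ with a constant depending only on $\alpha,p,q$ and $I$ (hence uniform in $\omega$), square and integrate over $\Omega$, and take square roots to obtain
\begin{align*}
\norm{\pi_{\mathrm{sto}}}_{L^2_\omega B^{\alpha-1}_{p,q}X} \lesssim \norm{\mathcal{K}_{\mathrm{sto}}}_{L^2_\omega B^{\alpha}_{p,q}X},
\end{align*}
which is \eqref{eq:pressureEstimate}. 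Measurability in $\omega$ of $\pi_{\mathrm{sto}}$ follows since it is a continuous (indeed linear) image of the measurable process $\mathcal{K}_{\mathrm{sto}}$ under the bounded operator $\partial_t$. One detail I would check is that the test class $C^\infty_0([0,T);C^\infty_{0,x})$ is dense enough in $B^{1-\alpha}_{p',q'}X'$ to realize the dual norm, or alternatively argue by approximation so that the computed distribution genuinely lies in the stated Besov space rather than just defining a bounded functional.
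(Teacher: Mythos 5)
Your proposal is correct and follows essentially the same route as the paper: interpret $\pi_{\mathrm{sto}}$ as $\partial_t\mathcal{K}_{\mathrm{sto}}$, pass to duality against test functions, and reduce everything to the index-shift estimate $\norm{\partial_t\xi}_{B^{-\alpha}_{p',q'}X'}\lesssim\norm{\xi}_{B^{1-\alpha}_{p',q'}X'}$, proved via extension to $\mathbb{R}$ and the Fourier (spectral) characterization of vector-valued Besov norms together with the dual-space identification for reflexive $X$ and $p,q\in(1,\infty)$. The only cosmetic difference is that the paper phrases the argument directly on the pairing $\langle\pi_{\mathrm{sto}},\xi\rangle$ rather than isolating the boundedness of $\partial_t$ as a separate operator statement, and it handles your final density concern by taking the supremum over smooth spatially mean-value-free test functions.
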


\begin{proof}
Let $\xi \in C^\infty_0\big( I; C^\infty_{0,x} \big)$ be spatially mean-value free. Due to~\eqref{eq:pisto} and duality
\begin{align} \label{eq:pressure01}
\langle \pi_{\mathrm{sto}},\xi \rangle  \leq \norm{\mathcal{K}_{\mathrm{sto}}}_{B^{\alpha}_{p,q} X} \norm{\partial_t \xi}_{(B^{\alpha}_{p,q} X)'}.
\end{align}

Note that $\xi$ trivially extends by zero to the full space in a smooth way. Now, we can use the equivalent spectral characterization of Besov norms, cf. Corollary~\ref{cor:spectralBesov}, 
\begin{align*}
\norm{\partial_t \xi}_{\big(B^{\alpha}_{p,q}(\mathbb{R}; X) \big)'} \eqsim \norm{\partial_t \xi}_{\big(\tilde{B}^{\alpha}_{p,q}(\mathbb{R}; X)\big)'}.
\end{align*}
Since $X$ is reflexive and $p, q < \infty$ we can identify the dual using Lemma~\ref{lem:dual}
\begin{align*}
\norm{\partial_t \xi}_{\big(\tilde{B}^{\alpha}_{p,q}(\mathbb{R}; X \big)'} \eqsim \norm{\partial_t \xi}_{\tilde{B}^{-\alpha}_{p',q'}(\mathbb{R}; X')}.
\end{align*}
An application of Lemma~\ref{app:lem_der} yields
\begin{align} \label{eq:key}
\norm{\partial_t \xi}_{\tilde{B}^{-\alpha}_{p',q'}(\mathbb{R}; X')} \lesssim \norm{\xi}_{\tilde{B}^{1-\alpha}_{p',q'}(\mathbb{R}; X')}.
\end{align}
Going back to the Besov norm in terms of integrability of differences, cf. Theorem~\ref{thm:spectralBesov}, and using that $\xi$ is extended by zero
\begin{align*}
\norm{\xi}_{\tilde{B}^{1-\alpha}_{p',q'}(\mathbb{R}; X')} \eqsim \norm{\xi}_{B^{1-\alpha}_{p',q'}(\mathbb{R}; X')} =  \norm{\xi}_{B^{1-\alpha}_{p',q'} X'}.
\end{align*}
This concludes
\begin{align} \label{eq:pressure02}
\norm{\partial_t \xi}_{\big(B^{\alpha}_{p,q}(\mathbb{R}; X) \big)'} \lesssim \norm{\xi}_{B^{1-\alpha}_{p',q'} X'}.
\end{align}

Overall, revisiting~\eqref{eq:pressure01} and using the density of smooth mean-value free functions in $X$ together with~\eqref{eq:pressure02},
\begin{align*}
\norm{\pi_{\mathrm{sto}}}_{(B^{1-\alpha}_{p',q'} X')'} = \sup_{\xi \in C^\infty_0( I; C^\infty_{0,x} ),\, \forall t:\mean{\xi(t)} = 0} \frac{\langle \pi_{\mathrm{sto}}, \xi \rangle}{\norm{\xi}_{B^{1-\alpha}_{p',q'}X}} \lesssim \norm{\mathcal{K}_{\mathrm{sto}}}_{B^{\alpha}_{p,q} X}.
\end{align*}
Ultimately, due to the equivalent norm on the dual space, cf. Lemma~\ref{lem:dual},
\begin{align*}
\norm{\pi_{\mathrm{sto}}}_{(B^{1-\alpha}_{p',q'} X')'} \eqsim \norm{\pi_{\mathrm{sto}}}_{B^{\alpha-1}_{p,q} X}.
\end{align*}
Taking the square and expectation verifies~\eqref{eq:pressureEstimate}.
\end{proof}

\begin{remark}
So far the stochastic pressure was always estimated on suboptimal spaces. For example in \cite{MR2004282,MR3022227} the authors infer $\pi_{\mathrm{sto}} \in L^1_\omega W^{-1,\infty}_t L^2_{0,x}$. They neglect all additional information on the temporal differentiability of $\mathcal{K}_{\mathrm{sto}}$ and only use the boundedness.

Whether a corresponding result of Lemma~\ref{lem:pressureEstimate} remains valid on the scale of exponentially integrable Besov spaces boils done to the understanding of the interaction between derivatives and norms in the spirit of~\eqref{eq:key}, i.e., 
\begin{align}
\norm{\partial_t \xi}_{B^{-\alpha}_{\Phi_2^*,1} W^{-1,2}_x} \lesssim \norm{\xi}_{B^{1-\alpha}_{\Phi_2^*,1} W^{-1,2}_x}.
\end{align}

In order to establish~\eqref{eq:key}, we heavily relied on the spectral representation of Besov norms. However, the equivalence of the spectral representation and the definition in terms of integrated differences fails for $p \in \{ 1, \infty \}$. Particularly, $\Phi_2(t) = e^{t^2}-1$ is to close to $\infty$ and whence $\Phi_2^*$ is to close to $1$. 

\end{remark}

Still, in regard of the regularity result of the integrated pressure, cf. Lemma~\ref{lem:integratedPressure}, it is natural to define the set 
\begin{align} \label{eq:NegativeBesov}
\begin{aligned}
&\mathbb{B}^{-\alpha}_{\Phi,r}(I; X) \\
&:= \left\{ u \in \big( C^\infty_0(I;X') \big)' \big| \, \exists g \in B^{1-\alpha}_{\Phi,r}(I;X) \forall \xi \in C^\infty_0(I;X') :\, \langle u,  \xi \rangle = -\int_I \langle g, \partial_t \xi \rangle \dd t   \right\}.
\end{aligned}
\end{align}

\begin{corollary}
In the setting of Lemma~\ref{lem:integratedPressure} we have $\pi_{\mathrm{sto}} \in \mathbb{B}^{-1/2}_{\Phi_2,\infty} X$ $\mathbb{P}$-a.s.
\end{corollary}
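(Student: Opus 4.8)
The plan is to recognize that this corollary is a bookkeeping statement: the witness $g$ demanded in the definition of $\mathbb{B}^{-1/2}_{\Phi_2,\infty} X$ in~\eqref{eq:NegativeBesov} is exactly the integrated stochastic pressure $\mathcal{K}_{\mathrm{sto}}$ produced in Lemma~\ref{lem:integratedPressure}, and the defining relation~\eqref{eq:pisto} for $\pi_{\mathrm{sto}}$ is precisely the pairing identity appearing in~\eqref{eq:NegativeBesov}. Thus, no new analysis is required; one only has to match the two definitions and pass from an $L^2_\omega$ bound to a pathwise statement.

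First I would invoke Lemma~\ref{lem:integratedPressure}, which supplies $\mathcal{K}_{\mathrm{sto}} \in L^2_\omega B^{1/2}_{\Phi_2,\infty} X$ with $X = W^{1,2}_{0,x} \cap L^2_{0,x}$, together with the quantitative bound~\eqref{eq:Estimate01}. Since this membership lives in an $L^2_\omega$ space, the second moment $\mathbb{E}\big[ \norm{\mathcal{K}_{\mathrm{sto}}}_{B^{1/2}_{\Phi_2,\infty} X}^2 \big]$ is finite, and hence $\mathbb{P}$-a.s. the sample path $\omega \mapsto \mathcal{K}_{\mathrm{sto}}(\omega)$ belongs to $B^{1/2}_{\Phi_2,\infty}(I;X)$. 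I fix such an $\omega$ outside the exceptional null set for the remainder of the argument.

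Next I would observe that with $\alpha = 1/2$ one has $1 - \alpha = 1/2$, so the regularity index demanded of $g$ in~\eqref{eq:NegativeBesov} is exactly $B^{1/2}_{\Phi_2,\infty}(I;X)$, which $\mathcal{K}_{\mathrm{sto}}(\omega)$ satisfies by the previous step. It then remains to verify that the distribution defining $\pi_{\mathrm{sto}}$ in~\eqref{eq:pisto} coincides with the pairing relation in~\eqref{eq:NegativeBesov} for the choice $g = \mathcal{K}_{\mathrm{sto}}$: both read $\langle \pi_{\mathrm{sto}}, \xi \rangle = -\int_I \langle \mathcal{K}_{\mathrm{sto}}, \partial_t \xi \rangle \dd t$. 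The only point to reconcile is the class of admissible test functions, since~\eqref{eq:pisto} is stated for $\xi \in C^\infty_0([0,T);C^\infty_{0,x})$ whereas~\eqref{eq:NegativeBesov} quantifies over $\xi \in C^\infty_0(I;X')$; but $C^\infty_0(I;C^\infty_{0,x})$ is a subclass of the former, and the smooth spatially mean-value free fields are dense in $X'$, so the identity transfers to all of $C^\infty_0(I;X')$. This places $\pi_{\mathrm{sto}}(\omega) \in \mathbb{B}^{-1/2}_{\Phi_2,\infty} X$, and since $\omega$ was arbitrary off a null set, the claim follows $\mathbb{P}$-a.s.

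I do not expect a genuine obstacle: the substantive work — gaining a full temporal derivative's worth of Besov-Orlicz regularity for the integrated pressure via the adjoint Bogovskii operator in Lemma~\ref{lem:integratedPressure}, and the negative-index transfer in Lemma~\ref{lem:pressureEstimate} — is already complete, and the present corollary merely repackages that output into the tailor-made space $\mathbb{B}^{-1/2}_{\Phi_2,\infty}$ that was introduced precisely to accommodate the borderline exponential integrability $\Phi_2$ (for which the spectral characterization, and hence Lemma~\ref{lem:pressureEstimate}, is unavailable). The only mild care needed is the a.s.\ passage from the $L^2_\omega$ estimate to pathwise membership and the density reconciliation of test-function classes noted above.
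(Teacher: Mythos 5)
Your proposal is correct and follows essentially the same route as the paper: identify $\mathcal{K}_{\mathrm{sto}}$ (with $1-\alpha=1/2$) as the witness $g$ in the definition~\eqref{eq:NegativeBesov}, and check that the pairing $\xi\mapsto -\int_I\langle\mathcal{K}_{\mathrm{sto}},\partial_t\xi\rangle\dd t$ is well-defined on $C^\infty_0(I;X')$, which the paper does via H\"older's inequality in the Orlicz pair $(\Phi_2,\Phi_2^*)$ and you obtain pathwise from the a.s.\ membership $\mathcal{K}_{\mathrm{sto}}\in B^{1/2}_{\Phi_2,\infty}X$. The only cosmetic difference is that you pass through a density argument to reconcile test-function classes where the paper simply tests against all of $C^\infty_0(I;X')$ directly; this does not change the substance.
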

\begin{proof}
We only need to verify that the distribution $\pi_{\mathrm{sto}}$ defined by~\eqref{eq:pisto} is well-defined. The claim that $\pi_{\mathrm{sto}}$ belongs to $ \mathbb{B}^{-1/2}_{\Phi_2,\infty} X$ follows by the definition~\eqref{eq:NegativeBesov} and the assumption on $\mathcal{K}_{\mathrm{sto}}$.

Let $\xi \in C^\infty_0\big( I; X' \big)$. Due to duality and H\"older's inequality
\begin{align*}
\int_I \langle \mathcal{K}_{\mathrm{sto}}, \partial_t \xi \rangle \dd t &\leq \int_I \norm{\mathcal{K}_{\mathrm{sto}}}_{X}\norm{\partial_t \xi }_{X'} \dd t \\
&\lesssim \norm{\mathcal{K}_{\mathrm{sto}}}_{L^{\Phi_2}_t X} \norm{\partial_t \xi }_{L^{\Phi_2^*}_t X'}.
\end{align*}
Next, we take the square and expectation
\begin{align*}
\mathbb{E} \abs{ \langle \pi_{\mathrm{sto}}, \xi \rangle }^2 \lesssim \norm{\mathcal{K}_{\mathrm{sto}}}_{L^2_\omega L^{\Phi_2}_t X}^2 \norm{\partial_t \xi }_{L^{\Phi_2^*}_t X'}^2.
\end{align*}
Thus, we have shown that $\pi_{\mathrm{sto}}$ is well-defined.
\end{proof}

\subsection{Proof of Theorem~\ref{ch:Stokes thm:EstimateWeakSolution}}
The previous results lead to a simple proof of Theorem~\ref{ch:Stokes thm:EstimateWeakSolution}.

\begin{proof}[Proof of Theorem~\ref{ch:Stokes thm:EstimateWeakSolution}]

Due to the factorization~\eqref{ch:intro intro:p-Stokes-stoch00New} it is possible to construct the velocity field~$u$ independently of the pressure~$\pi$. Afterwards the pressure is reconstructed. 

The existence of the velocity field and verification of the a priori bound~\eqref{ch:Stokes eq:EstimateWeakSolution} is done in Theorem~\ref{thm:ExistenceWeek}.

In Section~\ref{sec:Pressure} we deal with the pressure reconstruction. 
Lemma~\ref{lem:PressureDet} verifies the regularity of the deterministic pressure. The stochastic pressure is handled in Lemma~\ref{lem:integratedPressure} and~\ref{lem:pressureEstimate}. In particular, Lemma~\ref{lem:integratedPressure} ensures that $\mathcal{K}_{\mathrm{sto}} \in L^2_\omega B^{1/2}_{\Phi_2,\infty} X \hookrightarrow L^2_\omega B^{\alpha}_{s,r} X$ for any $\alpha \in (0,1/2)$ and $s,r \in (1,\infty)$. Therefore, Lemma~\ref{lem:pressureEstimate} implies~\eqref{ch:Stokes eq:EstimateWeakSolution03}.
\end{proof}

\section{Existence of strong solutions} \label{sec:ExistenceStrong}

Within this section we discuss the existence of strong solutions to~\eqref{ch:Stokes intro:p-Stokes-stoch}. The main difference between weak and strong solutions lies in the fact that the latter can be interpreted in a point-wise manner, i.e., there is no need to interpret~\eqref{ch:Stokes intro:p-Stokes-stoch01} in a distributional sense.

 Strong solutions for a related model have already been constructed in~\cite{MR4022286} by means of improved spatial regularity. They formally test~\eqref{ch:Stokes intro:p-Stokes-stoch} by the Laplacian. 

We on the other hand rely on the gradient flow structure~\eqref{ch:Stokes intro:p-Stokes-stoch01New}, which -- at least formally -- corresponds to a test with $-\Pi_{\Div}\Div S(\varepsilon u)$.

\subsection{Strong solutions}
\begin{definition} \label{def:strong}
Let $u_0 \in L^2_\omega L^2_{\Div}$ be $\mathcal{F}_0$-measurable. The tupel $(u,\pi)$ is called strong solution if it is a weak solution and additionally satisfies
\begin{enumerate}
\item $\Pi_{\Div} \Div S(\varepsilon u) \in L^2_\omega L^2_t L^2_{\Div}$,
\item for all $t \in I$ and $\mathbb{P}-a.s.$ it holds
\begin{align} \label{ch:Stokes eq:StrongFormulation}
u(t) - u_0 - \int_0^t \Pi_{\Div} \Div S(\varepsilon u) \ds = \sum_{j\in \mathbb{N}} \int_0^t \Pi_{\Div} g_j(\cdot,u) \dd \beta^j(s)
\end{align}
as an equation in $L^2_{\Div}$.
\end{enumerate}
\end{definition}

In general, strong solutions only upgrade the regularity of the divergence-free component of the equation. In particular, the pressure does not enjoy higher regularity for strong solutions compared to weak solutions.

\subsection{A sufficient condition for strong solutions}

\begin{assumption}\label{ch:Stokes ass:NoiseCoeffstrong}
We assume that $\set{g_j}_{j\in \mathbb{N}} \in C^1(\mathcal{O} \times \mathbb{R}^n; \mathbb{R}^n)$ such that for all $v \in W^{1,p}_{0,x} \cap L^2_{\Div}$ it holds
\begin{align}\label{ch:Stokes ass:growthstrong}
\norm{\varepsilon \Pi_{\Div} G(v) }_{L_2(U;L^p_x)}^2 = \sum_{j\in \mathbb{N}} \norm{\varepsilon \Pi_{\Div} g_j\big(\cdot,v(\cdot)\big)}_{L^p_x}^2 \lesssim (1+\norm{\varepsilon v}_{L^p_x}^2).
\end{align}
\end{assumption}

Assumption~\ref{ch:Stokes ass:NoiseCoeffstrong} can be verified by using the Sobolev stability of $\Pi_{\Div}$, cf. Theorem~\ref{app:FuncSpace thm:GradientStabilityHelm}, and a suitable assumption on the data $\set{g_j}$.

\subsection{Proof of Theorem~\ref{ch:Stokes thm:StrongEnergy}}

Our main tool in the derivation of energy bounds for strong solutions is the gradient flow structure of the equation.

\begin{proof}[Proof of Theorem~\ref{ch:Stokes thm:StrongEnergy}]
We will only address the case $q = 1$. General moments can be obtained by expanding the energy $\mathcal{J}(u)^{q}$. For more details we refer to~\cite[Theorem~4.3.1]{wich_dis}. Our aim is to apply the result~\cite[Theorem~1.4]{Gess2012Strong}. Therefore, we need to check the assumptions~$($A1$)-($A6$)$.

First of all we relate our framework to the one used in~\cite{Gess2012Strong}. We choose $H = L^2_{\Div}$, $S = W^{1,p}_{0,x}$ and $V = H \cap S$. Additionally, we let $\varphi(v) = \tilde{\varphi}(v) = \mathcal{J}(v)$ and $B_t(v) \equiv \Pi_{\Div} G(v)$. Keep in mind that we use $\varphi$ as the potential that defines the energy whereas Gess uses $\varphi$ to denote the energy itself.

Ad $($A1$)$: First we show that $\mathcal{J} : W^{1,p}_{0,x} \to \mathbb{R}$ is continuous. Indeed, let $u,v \in W^{1,p}_{0,x}$. Then, due to the fundamental theorem,
\begin{align*}
\abs{\mathcal{J}(u) - \mathcal{J}(v)} &= \abs{\int_{\mathcal{O}} \varphi_{\kappa}(\abs{\varepsilon u}) -  \varphi_{\kappa}(\abs{\varepsilon u}) \dd x} \\
&= \abs{\int_{\mathcal{O}} \int_0^1 \varphi_{\kappa}'( \abs{\varepsilon w_\theta}) \frac{\varepsilon w_\theta }{\abs{\varepsilon w_\theta}}: \varepsilon(u-v) \dd \theta \dd x} \\
&= \abs{\int_{\mathcal{O}} \int_0^1 (\kappa + \abs{\varepsilon w_\theta})^{p-2} \varepsilon w_\theta: \varepsilon(u-v) \dd \theta \dd x} \\
&\leq  \int_{\mathcal{O}} \int_0^1 (\kappa + \abs{\varepsilon w_\theta})^{p-1}  \dd \theta \abs{\varepsilon(u-v)} \dd x, 
\end{align*}
where $w_\theta= v + \theta(u-v)$. Note that $\abs{\varepsilon w_\theta} \leq \max\{ \abs{\varepsilon u}, \abs{\varepsilon v}\}$ for all $\theta \in [0,1]$. This and H\"older's inequality imply
\begin{align*}
 &\int_{\mathcal{O}} \int_0^1 (\kappa + \abs{\varepsilon w_\theta})^{p-1}  \dd \theta \abs{\varepsilon(u-v)} \dd x \\
 & \quad \leq  \left( \int_{\mathcal{O}} (\kappa +  \max\{ \abs{\varepsilon u}, \abs{\varepsilon v}\})^{p} \dd x \right)^{(p-1)/p} \left( \int_{\mathcal{O}}  \abs{\varepsilon(u-v)}^p \dd x \right)^{1/p}.  
\end{align*}
Finally, we arrive at
\begin{align*}
\abs{\mathcal{J}(u) - \mathcal{J}(v)} \leq \left(\int_{\mathcal{O}} (\kappa + \abs{\varepsilon u})^p \dd x  + \int_{\mathcal{O}} (\kappa + \abs{\varepsilon v})^p  \dd x \right)^{(p-1)/p} \norm{\nabla(u-v)}_{L^p_x}.
\end{align*}
The continuity is verified. 

Moreover, using that $\varphi'(t) = t^{p-1}$ is homogeneous of degree $p-1$ and a substitution,
\begin{align*}
\varphi_{\kappa}(2t) &= \int_0^{2t} \frac{\varphi'(\kappa + s)}{\kappa + s } s \dd s = \int_0^{2t} 2^{p-1} \frac{\varphi'(\frac{\kappa}{2} + \frac{s}{2})}{\frac{\kappa}{2} + \frac{s}{2} } \frac{s}{2} \dd s \\
&=2^p \int_0^{t}  \frac{\varphi'(\frac{\kappa}{2} + s)}{\frac{\kappa}{2} + s } s \dd s = 2^p \varphi_{\frac{\kappa}{2}}(t).
\end{align*}
Since $p \geq 2$ we find $\varphi_{\kappa}(t)$ is increasing in $\kappa$ for fixed $t$ and whence $\mathcal{J}(2u) \leq 2^p \mathcal{J}(u)$.

Ad $($A2$)$: Recall
\begin{align*}
\varphi_{\kappa}(t) &= \frac{1}{p}(\kappa + t)^p - \frac{\kappa}{p-1}(\kappa + t)^{p-1} + \frac{\kappa^p}{p(p-1)}, \\
\varphi_{\kappa}'(t) &=(\kappa + t)^{p-2} t, \\
\varphi_{\kappa}''(t) &= (\kappa +  t)^{p-2}\left( 1 + (p-2) \frac{t}{\kappa + t}\right).
\end{align*} 

Applying a weighted Young's inequality one finds
\begin{align} \label{eq:power}
 \frac{1}{2p}(\kappa + t)^p - \frac{2^{p-1} - 1}{p(p-1)}\kappa^p \leq \varphi_{\kappa}(t) \leq  \frac{1}{p}(\kappa + t)^p  + \frac{1}{p(p-1)}\kappa^p.
\end{align}
In other words $\varphi_{\kappa}$ behaves like the $p$th-power shifted along $\kappa$. It follows using the monotonicity of powers and~\eqref{eq:power}
\begin{align} \label{eq:EquivalenceShift}
&\int_{\mathcal{O}} \frac{1}{p}\abs{\varepsilon u}^p \dd x \leq \int_{\mathcal{O}}\frac{1}{p} ( \kappa +  \abs{ \varepsilon u})^p  \dd x \lesssim \int_{\mathcal{O}} \varphi_{\kappa}(\abs{\varepsilon u}) + \kappa^p \abs{\mathcal{O}}\eqsim \mathcal{J}(u) + 1.
\end{align}

Let $\set{w_k}_{k \in \mathbb{N}} \subset W^{1,p}_{0,x}$. Now, using~\eqref{eq:2ndGateaux}, H\"older's and Young's inequalities and~\eqref{eq:EquivalenceShift}
\begin{align*}
\sum_{k=1}^\infty D^2 \mathcal{J}(v)[w_k,w_k] &\leq (p-1) \sum_{k=1}^\infty \int_{\mathcal{O}} (\kappa + \abs{\varepsilon v})^{p-2} \abs{\varepsilon w_k}^2 \dd x \\
&\leq (p-1) \left( \int_{\mathcal{O}} (\kappa + \abs{\varepsilon v})^{p} \dd x \right)^{(p-2)/p} \sum_{k=1}^\infty \norm{\varepsilon w_k}_{L^p_x}^2  \\
&\leq  \frac{(p-1)p}{p-2} \int (\kappa + \abs{\varepsilon v})^p \dd x + \frac{(p-1)2}{p} \left( \sum_{k=1}^\infty \norm{\varepsilon w_k}_{L^p_x}^2 \right)^{p/2} \\
&\lesssim \mathcal{J}(v) + 1 + \left( \sum_{k=1}^\infty \norm{\varepsilon w_k}_{L^p_x}^2 \right)^{p/2}.
\end{align*}

Ad $($A3$)$: Lemma~\ref{lem:Taylor} shows that $\mathcal{J}$ is strongly convex.

Ad $($A4$)$: Recall~\eqref{eq:1stGateaux}. Therefore,
\begin{align*}
-D\mathcal{J}(v)[v] = -\int_{\mathcal{O}} \varphi_{\kappa}'(\abs{\varepsilon v}) \abs{\varepsilon v} \dd x \eqsim - \int_{\mathcal{O}} \varphi_{\kappa}(\abs{ \varepsilon v}) \dd x = - \mathcal{J}(v) \leq 0.
\end{align*}

Ad $($A5$)$: Follows by the assumption on the noise coefficient, cf.~\eqref{ch:Stokes ass:Lipschitzweak}.

Ad $($A6$)$: The major ingredient that enables strong solutions is the regularity of the noise. Recall~\eqref{ch:Reg def:NoiseCoeff} and~$($A2$)$ with $w_k = \Pi_{\Div}G(v)u_k = \Pi_{\Div} g_k(\cdot, v )$. It remains to estimate
\begin{align*}
\left( \sum_{k=1}^\infty \norm{\varepsilon w_k}_{L^p_x}^2 \right)^{p/2} = \left( \sum_{k=1}^\infty \norm{\varepsilon\Pi_{\Div} g_k(\cdot, v )}_{L^p_x}^2 \right)^{p/2} =\norm{\varepsilon \Pi_{\Div} G(v)}_{L_2(U;L^p_x)}^p.
\end{align*}
An application of the Assumption~\ref{ch:Stokes ass:NoiseCoeffstrong} and~\eqref{eq:EquivalenceShift} show
\begin{align*}
\norm{\varepsilon \Pi_{\Div} G(v)}_{L_2(U;L^p_x)}^p \lesssim  \norm{\varepsilon v}_{L^p_x}^p + 1 \lesssim \mathcal{J}(v) + 1.
\end{align*}

Overall, we have verified $($A1$)-($A6$)$ and therefore we may apply~\cite[Theorem~1.4]{Gess2012Strong} which provides us with the a priori bound
\begin{align*}
\sup_{t \in I} \mathbb{E} \left[ \mathcal{J}(u) \right] + \mathbb{E } \left[ \int_I \norm{\Pi_{\Div} \Div S(\varepsilon u)}_{L^2_x}^2 \dd s \right] \lesssim \mathbb{E} \left[ \mathcal{J}(u_0) \right] + 1.
\end{align*}

A careful consideration of the proof shows that one can also obtain an estimate where the supremum in time is taken before the expectation. One additional term, due to the stochastic integral, needs to be considered, i.e.,
\begin{align*}
&\mathbb{E} \left[ \sup_{t\in I} \int_0^t \sum_{k} D \mathcal{J}(u)[\Pi_{\Div} g_k(\cdot, v)] \dd \beta(s) \right] \\
&\quad = \mathbb{E} \left[ \sup_{t\in I} \int_0^t \sum_{k} \int_{\mathcal{O}} (\kappa + \abs{\varepsilon u})^{p-2} \varepsilon u: \varepsilon \Pi_{\Div} g_k(\cdot, v)] \dd \beta^k(s) \right] .
\end{align*}

Due to the Burkholder-Davis-Gundy inequality
\begin{align*}
&\mathbb{E} \left[ \sup_{t\in I} \int_0^t \sum_{k} \int_{\mathcal{O}} (\kappa + \abs{\varepsilon u})^{p-2} \varepsilon u: \varepsilon \Pi_{\Div} g_k(\cdot, v)] \dd x \dd \beta^k(s) \right] \\
&\quad \eqsim \mathbb{E} \left[ \left( \int_I \sum_{k} \left( \int_{\mathcal{O}} (\kappa + \abs{\varepsilon u})^{p-2} \varepsilon u: \varepsilon \Pi_{\Div} g_k(\cdot, v)] \dd x \right)^2 \dd s \right)^{1/2} \right].
\end{align*}
Invoking H\"older's inequality
\begin{align*}
&\mathbb{E} \left[ \left( \int_I \sum_{k} \left( \int_{\mathcal{O}} (\kappa + \abs{\varepsilon u})^{p-2} \varepsilon u: \varepsilon \Pi_{\Div} g_k(\cdot, v)] \dd x \right)^2 \dd s \right)^{1/2} \right] \\
&\quad \leq \mathbb{E} \left[ \left( \int_I  \left( \int_{\mathcal{O}} (\kappa + \abs{\varepsilon u})^{p} \dd x \right)^{2(p-1)/p} \norm{\varepsilon \Pi_{\Div} G(v)}_{L_2(U;L^p_x)}^2 \dd s \right)^{1/2} \right]. 
\end{align*}
The Assumption~\ref{ch:Stokes ass:NoiseCoeffstrong} together with~\eqref{eq:EquivalenceShift} show
\begin{align*}
&\mathbb{E} \left[ \left( \int_I  \left( \int_{\mathcal{O}} (\kappa + \abs{\varepsilon u})^{p} \dd x \right)^{2(p-1)/p} \norm{\varepsilon \Pi_{\Div} G(v)}_{L_2(U;L^p_x)}^2 \dd s \right)^{1/2} \right] \\
&\quad \lesssim \mathbb{E} \left[ \left( \int_I \mathcal{J}(u)^2+1 \dd s \right)^{1/2} \right]  \lesssim \mathbb{E} \left[ \left( \int_I \mathcal{J}(u)^2 \dd s \right)^{1/2} \right] + 1.
\end{align*}
Finally, Young's inequality verifies
\begin{align*}
&\mathbb{E} \left[ \sup_{t\in I} \int_0^t \sum_{k} D \mathcal{J}(u)[\Pi_{\Div} g_k(\cdot, v)] \dd \beta(s) \right] \\
&\quad \leq \delta \mathbb{E} \left[\sup_{t \in I} \mathcal{J}(u) \right] + c_\delta \left( \mathbb{E} \left[ \int_I \mathcal{J}(u) \dd s \right] + 1 \right).
\end{align*}
This allows for a Gronwall type argument that establishes the improved a priori estimate
\begin{align*}
\mathbb{E} \left[ \sup_{t \in I} \mathcal{J}(u) \right] +\mathbb{E} \left[ \int_I \norm{\Pi_{\Div} \Div S(\varepsilon u)}_{L^2_x}^2 \dd s \right] \lesssim \mathbb{E}\left[ \mathcal{J}(u_0) \right] + 1.
\end{align*}
 
\end{proof}

\begin{remark}
On a first view we could ask, if one could expand the $q$-energy of the symmetric $p$-Stokes system. However, additional difficulties arise to to the presence of the symmetric gradient. In particular, one needs to understand the interaction between projected symmetric $p$- and $q$-Laplacian. If one could establish a sign
\begin{align*}
\int_{\mathcal{O}} \Pi_{\Div} \Div \big(\abs{\varepsilon u}^{p-2} \varepsilon u \big) \cdot \Pi_{\Div} \Div \big( \abs{\varepsilon u}^{q-2} \varepsilon u \big) \dd x \geq 0,
\end{align*}
then it is possible to obtain improved gradient regularity.

Regularity questions about $\Div S(\varepsilon u)$ -- and ultimately on $\nabla S(\varepsilon u)$ -- are out of reach, since
\begin{align*}
\Pi_{\Div}^\perp \Div S(\varepsilon u) = \nabla \pi_{\mathrm{det}} \in L^{p'}_\omega L^{p'}_t W^{-1,p'}_x
\end{align*}
is only a distribution. The derivation of higher regularity for the pressure is a non-trivial task.
\end{remark}

\section{Temporal regularity of strong solutions} \label{sec:RegularityStrong}
Fortunately, the strong formulation of equation~\eqref{ch:Stokes intro:p-Stokes-stoch} is sufficient to derive improved temporal regularity. We already established a similar result for the $p$-Laplace system in~\cite{wichmann2021temporal}.

\subsection{Exponential Besov-regularity}
In general, it is delicate to derive temporal regularity on the limited threshold of order $1/2$ for stochastic partial differential equations. A key example was derived by Hyt\"{o}nen and Veraar in~\cite{VerHyt08}. They show that $\mathbb{P}$-almost surely Wiener processes belong to the Besov space $B^{1/2}_{\Phi_2,\infty}$ and do not belong to the Besov space $B^{1/2}_{p,q}$ for $p,q < \infty$. 

We show that strong solutions inherit similar temporal regularity properties from the driving Wiener process.

\begin{proof}[Proof of Theorem~\ref{ch:Stokes thm:BesovReg}]
We will only present part~\ref{it:b} of Theorem~\ref{ch:Stokes thm:BesovReg} in detail and comment on the necessary changes for part~\ref{it:a}.

We start with the estimate for the Besov-Orlicz semi-norm
\begin{align*}
\mathbb{E} \left[ \seminorm{u}_{B^{1/2}_{\Phi_2,\infty}(I;L^2_x)}^2 \right] = \mathbb{E} \left[ \left( \sup_h h^{-1/2} \norm{\tau_h u}_{L^{\Phi_2}(I \cap I-\set{h}; L^2_x)} \right)^2 \right].
\end{align*}
Using the strong formulation~\eqref{ch:Stokes eq:StrongFormulation}
\begin{align*}
\mathbb{E} \left[ \seminorm{u}_{B^{1/2}_{\Phi_2,\infty}(I;L^2_x)}^2 \right] &\lesssim \mathbb{E} \left[ \left( \sup_h h^{-1/2} \norm{\int_{\cdot}^{\cdot + h} \Pi_{\Div} \Div S(\varepsilon u) \ds  }_{L^{\Phi_2}(I \cap I-\set{h}; L^2_x)} \right)^2 \right] \\
&\quad + \mathbb{E} \left[ \left( \sup_h h^{-1/2} \norm{\tau_h \mathcal{I}(G(u))}_{L^{\Phi_2}(I \cap I-\set{h}; L^2_x)}  \right)^2 \right] \\
&=: \mathrm{I} + \mathrm{II}.
\end{align*}

Invoking $L^{\infty} \hookrightarrow L^{\Phi_2}$ and H\"older's inequality
\begin{align*}
\mathrm{I} &\lesssim \mathbb{E} \left[ \left( \sup_h h^{-1/2} \sup_{t \in I \cap I-\set{h}} \int_{t}^{t + h} \norm{\Pi_{\Div} \Div S(\varepsilon u)  }_{L^2_x} \ds \right)^2 \right] \\
&\leq \mathbb{E} \left[  \sup_h \sup_{t \in I \cap I-\set{h}} \int_{t}^{t + h} \norm{\Pi_{\Div} \Div S(\varepsilon u)  }_{L^2_x}^2 \ds  \right] \\
&\leq \norm{\Pi_{\Div} \Div S(\varepsilon u)}_{L^2_\omega L^2_t L^2_x}^2 .
\end{align*}

The second term is bounded by Theorem~\ref{app:StochInt thm:Stability}~\ref{app:StochInt it:5}, ~\eqref{app:StochInt eq:RelGammaL2} and~\eqref{ch:Stokes ass:growthweak}
\begin{align*}
\mathrm{II} \leq \norm{\mathcal{I}(G(u))}_{L^{2}_\omega B_{\Phi_2,\infty}^{1/2}(0,T;L^2_x)}^2  &\lesssim \norm{G(u)}_{L^{N_2}_\omega L^\infty_t \gamma(U;L^2_x)}^2 \\
&= \norm{G(u)}_{L^{N_2}_\omega L^\infty_t L_2(U;L^2_x)}^2 \lesssim \norm{u}_{L^{N_2}_\omega L^\infty_t L^2_x}^2 + 1.
\end{align*}
This establishes the semi-norm estimate. 

The full norm follows by the embedding $L^\infty \hookrightarrow L^{\Phi_2}$
\begin{align*}
\norm{u}_{L^2_\omega B^{1/2}_{\Phi_2,\infty} L^2_x}^2 &\lesssim \norm{u}_{L^2_\omega L^{\Phi_2}_t L^2_x}^2 + \mathbb{E} \left[ \seminorm{u}_{B^{1/2}_{\Phi_2,\infty}(I;L^2_x)}^2 \right] \\
&\lesssim \norm{u}_{L^{N_2}_\omega L^\infty_t L^2_x}^2 + \norm{\Pi_{\Div} \Div S(\varepsilon u)}_{L^2_\omega L^2_t L^2_x}^2 + 1.
\end{align*}
We have verified~\eqref{eq:mainNikolski}.

Part~\ref{it:a} follows similarly to part~\ref{it:b}, but we use the embedding $L^\infty_t \hookrightarrow L^q_t$ and Theorem~\ref{app:StochInt thm:Stability}~\ref{app:StochInt it:2} together with an extrapolation result, cf.~\cite[Remark~3.4]{MR4116708}.
\end{proof}

\begin{remark}
Theorem~\ref{ch:Stokes thm:BesovReg} immediately implies H\"older regularity up to almost $1/2$ by an application of a Sobolev embedding (cf.~\cite[Section~2.3.2]{MR781540}), i.e., for any $\alpha \in (0,1/2)$
\begin{align*}
u \in L^2_{\omega} B^{1/2}_{\Phi_2,\infty} L^2_x \hookrightarrow L^2_{\omega} C^{0,\alpha}_t L^2_x.
\end{align*}
\end{remark}

\subsection{Nikolskii-regularity of non-linear gradient}
Before we can turn our attention to investigate the regularity properties of the non-linear gradient, we need to understand how the stochastic integral operator interacts with linear projections and gradients. 

\begin{lemma} \label{ch:Stokes lem:GradientStabilityStoch}
Let $p \geq 2$ and Assumption~\ref{ch:Stokes ass:NoiseCoeffstrong} be satisfied. Then there exists a constant $C > 0$ such that for all progressively measurable $u \in L^{p}_\omega L^\infty_t \big( L^2_{\Div} \cap W^{1,p}_{0,x}\big)$ it holds
\begin{align} \label{ch:Stokes eq:GradientStabilityStoch}
\norm{\varepsilon \Pi_{\Div} \mathcal{I}\big(G(u)\big)}_{L^p_\omega B^{1/2}_{p,\infty} L^p_x}^p \leq C\big( 1 + \norm{\varepsilon u}_{L^{p}_\omega L^\infty_t L^p_x}^p \big).
\end{align}
\end{lemma}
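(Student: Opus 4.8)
The plan is to push the deterministic operator $\varepsilon\Pi_{\Div}$ through the stochastic integral and then to apply the temporal endpoint of the stochastic integral stability from Theorem~\ref{app:StochInt thm:Stability}, in complete analogy with the proof of Theorem~\ref{ch:Stokes thm:BesovReg}~\ref{it:a}. First I would note that, on a $C^2$-domain, the composition $\varepsilon\Pi_{\Div}$ is a bounded linear operator from $W^{1,p}_{0,x}$ into $L^p_x$, by the Sobolev stability of the Helmholtz--Leray projection (Theorem~\ref{app:FuncSpace thm:GradientStabilityHelm} with $k=1$) and the boundedness of the symmetric gradient. Since $W^{1,p}_{0,x}$ is $2$-smooth for $p\geq 2$, Assumption~\ref{ch:Stokes ass:NoiseCoeffstrong} guarantees that $\Pi_{\Div}G(u)$ is a progressively measurable $\gamma(U;W^{1,p}_{0,x})$-valued process with the integrability needed to define $\mathcal{I}\big(\Pi_{\Div}G(u)\big)$ as a $W^{1,p}_{0,x}$-valued process. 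As bounded linear operators commute with stochastic integration against a cylindrical Wiener process, this yields
\begin{align*}
\varepsilon\Pi_{\Div}\mathcal{I}\big(G(u)\big) = \mathcal{I}\big(\varepsilon\Pi_{\Div}G(u)\big)
\end{align*}
as an $L^p_x$-valued process.

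Next I would invoke the stability estimate with the $2$-smooth space $E=L^p_x$ at the temporal endpoint, so that the differentiability index is $\alpha=1/2$ and the target scale is the Nikolskii space $B^{1/2}_{p,\infty}$. Concretely, Theorem~\ref{app:StochInt thm:Stability}~\ref{app:StochInt it:2} combined with the extrapolation of~\cite[Remark~3.4]{MR4116708} -- the same combination used for Theorem~\ref{ch:Stokes thm:BesovReg}~\ref{it:a} -- should provide
\begin{align*}
\norm{\mathcal{I}\big(\varepsilon\Pi_{\Div}G(u)\big)}_{L^p_\omega B^{1/2}_{p,\infty}L^p_x}^p \lesssim \norm{\varepsilon\Pi_{\Div}G(u)}_{L^p_\omega L^\infty_t\,\gamma(U;L^p_x)}^p.
\end{align*}

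It then remains to control the integrand by a purely deterministic, pointwise argument. Using that $L^p_x$ is $2$-smooth I would pass from the $\gamma$-radonifying norm to the Hilbert--Schmidt norm through~\eqref{app:StochInt eq:RelGammaL2}, and then apply the growth bound~\eqref{ch:Stokes ass:growthstrong} of Assumption~\ref{ch:Stokes ass:NoiseCoeffstrong} pointwise in $(\omega,t)$, namely
\begin{align*}
\norm{\varepsilon\Pi_{\Div}G(u)}_{\gamma(U;L^p_x)}^2 \lesssim \norm{\varepsilon\Pi_{\Div}G(u)}_{L_2(U;L^p_x)}^2 \lesssim 1 + \norm{\varepsilon u}_{L^p_x}^2.
\end{align*}
Taking the supremum over $t\in I$ and then the $p$-th moment in $\omega$ turns the right-hand side into $1+\norm{\varepsilon u}_{L^p_\omega L^\infty_t L^p_x}^p$, which together with the previous two displays establishes~\eqref{ch:Stokes eq:GradientStabilityStoch}.

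I expect the main obstacle to lie in the stochastic integral step. Reaching the \emph{endpoint} regularity $\alpha=1/2$ with fine index $\infty$ while keeping both the probabilistic moment and the Besov time-integrability equal to $p$ is not immediate: the estimate in Theorem~\ref{app:StochInt thm:Stability}~\ref{app:StochInt it:2} ties the $2q$-th moment to the integrability index $q$, so the endpoint together with the decoupling of these two indices is exactly what the extrapolation argument of~\cite[Remark~3.4]{MR4116708} has to supply. By contrast, the commutation of $\varepsilon\Pi_{\Div}$ with the stochastic integral and the deterministic growth estimate are comparatively routine.
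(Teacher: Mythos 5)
Your proposal is correct and follows essentially the same route as the paper: commute $\varepsilon\Pi_{\Div}$ with the stochastic integral, apply Theorem~\ref{app:StochInt thm:Stability}~\ref{app:StochInt it:2} (with the extrapolation of~\cite[Remark~3.4]{MR4116708}) at the endpoint $\alpha=1/2$ with $E=L^p_x$, and then pass from the $\gamma$-radonifying to the Hilbert--Schmidt norm via~\eqref{app:StochInt eq:RelGammaL2} and the growth bound~\eqref{ch:Stokes ass:growthstrong}. Your explicit remark that the endpoint estimate requires decoupling the probabilistic moment from the Besov integrability index is, if anything, slightly more careful than the paper's own citation at this point.
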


\begin{proof}
Due to the linearity of $\varepsilon \Pi_{\Div}$ and $\mathcal{I}$ we find
\begin{align*}
\varepsilon \Pi_{\Div} \mathcal{I}\big(G(u)\big) =  \mathcal{I}\big(\varepsilon \Pi_{\Div}G(u)\big).
\end{align*}
Therefore, using the time regularity of stochastic integrals, cf. Theorem~\ref{app:StochInt thm:Stability}~\ref{app:StochInt it:2},
\begin{align*}
\norm{\varepsilon \Pi_{\Div} \mathcal{I}(G(u))}_{L^p_\omega B^{1/2}_{p,\infty} L^p_x}^p &= \norm{ \mathcal{I}( \varepsilon \Pi_{\Div} G(u))}_{L^p_\omega B^{1/2}_{p,\infty} L^p_x}^p \\
&\lesssim \norm{ \varepsilon \Pi_{\Div} G(u)}_{L^p_\omega L^\infty_t \gamma(U; L^p_x)}^p.
\end{align*}
Since $L^p_x$ is of $2$-smooth, we can use~\eqref{app:StochInt eq:RelGammaL2}. This and the growth assumption~\eqref{ch:Stokes ass:growthstrong} establish
\begin{align*}
\norm{  \varepsilon \Pi_{\Div} G(u)}_{L^{p}_\omega L^{\infty}_t \gamma(U;L^p_x)}^p \lesssim \norm{  \varepsilon \Pi_{\Div} G(u)}_{L^{p}_\omega L^{\infty}_t L_2(U;L^p_x)}^p \lesssim \norm{\varepsilon u}_{L^{N_p}_\omega L^{\infty}_t L^p_x}^p + 1.
\end{align*}
\end{proof}

Now we are ready to prove the result on temporal regularity of the non-linear gradient.

\begin{proof}[Proof of Theorem~\ref{ch:Stokes thm:NikolskiiGrad}]
The $V$-coercivity, cf. Lemma~\ref{app:FuncSpace lem:hammer}, allows to rewrite
\begin{align} \label{ch:Stokes eq:VNon}
\norm{\tau_h V(\varepsilon u)}_{L^2_x}^2 &\eqsim \left( \tau_h S(\varepsilon u), \tau_h \varepsilon u \right).
\end{align}
It follows, using the symmetry and integration by parts,
\begin{align*}
\left( \tau_h S(\varepsilon u), \tau_h \varepsilon u \right) = \left( \tau_h S(\varepsilon u), \tau_h \nabla u \right) = -\left( \tau_h \Pi_{\Div} \Div S(\varepsilon u), \tau_h u \right).
\end{align*}
Applying the strong formulation~\eqref{ch:Stokes eq:StrongFormulation}
\begin{align*}
&\left( \tau_h \Pi_{\Div} \Div S(\varepsilon u), \tau_h u \right) \\
&\hspace{2em }= \left( \tau_h \Pi_{\Div} \Div S(\varepsilon u), \int_{\cdot - h}^\cdot \Pi_{\Div} \Div S(\varepsilon u) \dd \tau \right) \\
&\hspace{4em} + \left( \tau_h \Pi_{\Div} \Div S(\varepsilon u), \tau_h \Pi_{\Div} \mathcal{I}(G(u)) \right)\\
&\hspace{2em } =: \mathrm{I} + \mathrm{II}.
\end{align*}

The first term is estimated by H\"older's and Young's inequalities
\begin{align} \label{ch:Stokes eq:FirstSplit}
\begin{aligned}
\mathrm{I} &\leq \norm{\tau_h \Pi_{\Div} \Div S(\varepsilon u)}_{L^2_x} \norm{\int_{\cdot - h}^\cdot \Pi_{\Div} \Div S(\varepsilon u) \dd \tau } \\
&\leq h \left( \norm{\tau_h \Pi_{\Div} \Div S(\varepsilon u)}_{L^2_x}^2  + \dashint_{\cdot-h}^{\cdot} \norm{\Pi_{\Div} \Div S(\varepsilon u) }_{L^2_x}^2 \dd \tau \right).
\end{aligned}
\end{align}

The stochastic term needs a more refined analysis. Due to the symmetry
\begin{align*}
\mathrm{II} &= -\left( \tau_h S(\varepsilon u), \tau_h \nabla \Pi_{\Div} \mathcal{I}(G(u)) \right) = - \left( \tau_h S(\varepsilon u), \tau_h \varepsilon \Pi_{\Div} \mathcal{I}(G(u)) \right).
\end{align*}
Using Lemma~\ref{app:FuncSpace lem:hammer} and Young's inequality
\begin{align*}
\mathrm{II} &\leq \int_{\mathcal{O}} \abs{\tau_h S(\varepsilon u)} \abs{\tau_h \varepsilon \Pi_{\Div} \mathcal{I}(G(u))} \dd x \\
&\eqsim \int_{\mathcal{O}} \left( \kappa + \abs{\varepsilon u} + \abs{\tau_h \varepsilon u} \right)^{p-2} \abs{\tau_h \varepsilon u} \abs{\tau_h \varepsilon \Pi_{\Div} \mathcal{I}(G(u))} \dd x \\
&\leq \delta \norm{\tau_h V(\varepsilon u)}_{L^2_x}^2 + c_\delta \int_{\mathcal{O}} \left( \kappa + \abs{\varepsilon u} + \abs{\tau_h \varepsilon u} \right)^{p-2} \abs{\tau_h \varepsilon \Pi_{\Div} \mathcal{I} (G(u))}^2 \dd x\\
&=: \mathrm{II}_1 + \mathrm{II}_2.
\end{align*}
The first term can be absorbed. The remaining term is estimated by H\"older's and Young's inequalities
\begin{align} \label{ch:Stokes eq:FirstSplit02}
\begin{aligned}
\mathrm{II}_2 &\leq h c_\delta\left( \frac{p-2}{p}  \int_{\mathcal{O}} (\kappa + \abs{\varepsilon u} + \abs{\tau_h \varepsilon u})^p \dx + \frac{2}{p} h^{-p/2} \norm{\tau_h \varepsilon \Pi_{\Div} \mathcal{I} (G(u))}_{L^p_x}^p  \right) \\
&\lesssim h c_\delta\left( \frac{p-2}{p} \left( \sup_{t \in I} \norm{\varepsilon u}_{L^p_x}^p + 1 \right) + \frac{2}{p} h^{-p/2} \norm{\tau_h \varepsilon \Pi_{\Div} \mathcal{I} (G(u))}_{L^p_x}^p  \right).
\end{aligned}
\end{align}

We are in position to estimate the non-linear expression $V(\varepsilon u)$. Integrate~\eqref{ch:Stokes eq:VNon} in time and rescale by $h^{-1}$. Moreover, take the supremum over $h$ and expectation. Finally, apply~\eqref{ch:Stokes eq:FirstSplit} and~\eqref{ch:Stokes eq:FirstSplit02} to find
\begin{align*}
&(1-\delta)\mathbb{E} \left[ \sup_{h \in I} h^{-1/2} \int_{I_h} \norm{\tau_h V(\varepsilon u)}_{L^2_x}^2 \dd s \right] \\
&\hspace{2em }\lesssim \mathbb{E} \left[ \sup_{h \in I} \int_{I_h} \left\{ \norm{\tau_h \Pi_{\Div} \Div S(\varepsilon u)}_{L^2_x}^2  + \dashint_{\cdot-h}^{\cdot} \norm{\Pi_{\Div} \Div S(\varepsilon u) }_{L^2_x}^2 \dd \tau \right\} \dd s\right] \\
&\hspace{2em} + c_\delta \left( \mathbb{E} \left[ \sup_{t\in I} \norm{\varepsilon u}_{L^p_x}^p \right] + 1 + \mathbb{E} \left[\sup_{h\in I} h^{-p/2} \int_{I_h} \norm{\tau_h \varepsilon \Pi_{\Div} \mathcal{I}\big( G(u)\big)}_{L^p_x}^p \dd s \right] \right).
\end{align*}
Fubini's theorem and the a priori estimate~\eqref{ch:Stokes eq:StrongEnergy} imply
\begin{align*}
&\mathbb{E} \left[ \sup_{h \in I} \int_{I_h} \left\{ \norm{\tau_h \Pi_{\Div} \Div S(\varepsilon u)}_{L^2_x}^2  + \dashint_{\cdot-h}^{\cdot} \norm{\Pi_{\Div} \Div S(\varepsilon u) }_{L^2_x}^2 \dd \tau \right\} \dd s\right] \\
&\hspace{2em} \lesssim \mathbb{E} \left[ \int_{I} \norm{ \Pi_{\Div} \Div S(\varepsilon u)}_{L^2_x}^2 \dd s\right].
\end{align*}

The time regularity of the stochastic integral, cf. Lemma~\ref{ch:Stokes lem:GradientStabilityStoch}, allows to conclude
\begin{align*}
&\mathbb{E} \left[\sup_{h\in I} h^{-p/2} \int_{I_h} \norm{\tau_h \varepsilon \Pi_{\Div} \mathcal{I}\big( G(u)\big)}_{L^p_x}^p \dd s \right] \\
&\hspace{2em} = \mathbb{E} \left[ \seminorm{\varepsilon \Pi_{\Div} \mathcal{I}\big( G(u) \big)}_{B^{1/2}_{2,\infty} L^p_x}^p \right] \lesssim  \norm{\varepsilon u}_{L^{p}_\omega L^\infty_t L^p_x}^p + 1.
\end{align*}
Choosing $\delta > 0$ sufficiently small and~\eqref{eq:EquivalenceShift} lead to
\begin{align*}
\mathbb{E}\left[ \seminorm{V(\varepsilon u)}_{ B^{1/2}_{2,\infty} L^2_x}^2 \right] &\lesssim \mathbb{E} \left[ \int_{I} \norm{ \Pi_{\Div} \Div S(\varepsilon u)}_{L^2_x}^2 \dd s\right]+  \norm{\varepsilon u}_{L^{p}_\omega L^\infty_t L^p_x}^p + 1 \\
&\lesssim  \mathbb{E} \left[ \int_{I} \norm{ \Pi_{\Div} \Div S(\varepsilon u)}_{L^2_x}^2 \dd s\right]+ \mathbb{E} \left[ \sup_{t\in I} \mathcal{J}(u) \right]+ 1.
\end{align*}

Lastly, using the equivalence $\varphi'(t) t \eqsim \varphi(t)$, 
\begin{align*}
\mathbb{E} \left[ \norm{V(\varepsilon u)}_{L^2_t L^2_x}^2 \right] &= \mathbb{E} \left[ \int_I \int_\mathcal{O} \varphi_{\kappa}'(\abs{\varepsilon u}) \abs{\varepsilon u} \dd x \dd s \right] \\
&\eqsim \mathbb{E} \left[ \int_I \int_\mathcal{O} \varphi_{\kappa}(\abs{\varepsilon u}) \dd x \dd s \right]  = \mathbb{E} \left[ \int_I \mathcal{J}(u) \dd s \right].
\end{align*}
All together, we proved
\begin{align*}
\norm{V(\varepsilon u)}_{L^2_\omega B^{1/2}_{2,\infty} L^2_x}^2 \lesssim  \mathbb{E} \left[ \int_{I} \norm{ \Pi_{\Div} \Div S(\varepsilon u)}_{L^2_x}^2 \dd s\right]+ \mathbb{E} \left[ \sup_{t\in I} \mathcal{J}(u) \right]+ 1.
\end{align*}

\end{proof}

\begin{remark}
Strong solutions to the symmetric stochastic $p$-Stokes system~\eqref{ch:Stokes intro:p-Stokes-stoch} are comparable in terms of temporal regularity to the stochastic $p$-Laplace system, cf.~\cite{wichmann2021temporal}. 

In particular, if one could prove increased integrability, e.g. $ \Pi_{\Div}\Div S(\varepsilon u) \in L^2_{\omega} L^{q}_t L^2_x$ for some $q > 2$, then it would also be possible to show $V(\varepsilon u) \in L^2_\omega B^{1/2}_{q,\infty} L^2_x$.

The temporal regularity enables a similar error analysis for time discretizations of~\eqref{ch:Stokes intro:p-Stokes-stoch} as done in e.g.~\cite{MR4298537,Diening2022} for the $p$-Laplace system.
\end{remark}

\appendix
\section{Fourier representation of Besov norms} \label{app:Besov}
There are many different characterizations of Besov spaces. A nice overview can be found e.g. in the books of Triebel~\cite{MR781540,MR1163193}. Some results derived from a specific characterization are not obvious for others. In this appendix we introduce the Fourier representation of Besov spaces and state some properties.

\begin{definition} \label{def:partitionOfUnity}
A family of functions $\set{\varphi_j}_{j\in \mathbb{N}_0}$ is called smooth dyadic partition of unity, if the following conditions are satisfied:
\begin{enumerate}
\item $\varphi_j \in C^\infty(\mathbb{R})$, 
\item \label{it:support}
\begin{align*}
\begin{cases}
\mathrm{supp}\, \varphi_0 \subset \{t \in \mathbb{R}: \,\abs{t} \leq 2 \}, \\
\mathrm{supp}\, \varphi_j \subset \{t \in \mathbb{R}: \,2^{j-1} \leq \abs{t} \leq 2^{j+1} \}, \quad j \in \mathbb{N},
\end{cases}
\end{align*}
\item \label{it:bounds} for any $k \in \mathbb{N}_0$
\begin{align*}
\sup_{t\in \mathbb{R}, j\in \mathbb{N}_0} 2^{jk} \abs{\partial^k \varphi_j(t)} < \infty,
\end{align*}
\item \label{it:ones} and for any $t \in \mathbb{R}$
\begin{align*}
\sum_{j\in \mathbb{N}_0} \varphi_j(t) = 1.
\end{align*}
\end{enumerate}
\end{definition}

Now we are in a good shape to define the Fourier representation of Besov spaces on the full space~$\mathbb{R}$ with values in a Banach space~$X$.
\begin{definition}
Let $\alpha \in \mathbb{R}$, $p \in [1,\infty]$ and  $q\in[1,\infty]$. We define
\begin{align} \label{def:Besov02}
\norm{f}_{\tilde{B}^{\alpha}_{p,q}(\mathbb{R};X)} := \norm{\set{2^{j\alpha} \norm{\mathcal{F}^{-1}\big(\varphi_j \mathcal{F}(f) \big) }_{L^{p}(\mathbb{R} ;X)} } }_{\ell_q}
\end{align}
and
\begin{align} \label{def:spaceBesov02}
\tilde{B}^{\alpha}_{p,q}(\mathbb{R};X) := \left\{ u: I \to X |\, u \text{ Bochner-measurable,}\, \norm{u}_{\tilde{B}^{\alpha}_{p,q}(\mathbb{R};X)} < \infty \right\}.
\end{align}
Here $\mathcal{F}$ denotes the Fourier transform.
\end{definition}

In contrast to the definition~\eqref{def:Besov01}, where integrability of scaled increments is measured,~\eqref{def:Besov02} measures integrability of frequency localized and scaled values. Note that~\eqref{def:Besov02} is defined explicitly even for $\alpha <0$. A generalization of the explicit definition~\eqref{def:Besov01} for negative $\alpha$ is not clear. Nevertheless, for $\alpha > 0$ one can verify equivalence of~\eqref{def:Besov01} and~\eqref{def:Besov02} as done in~\cite[Section~1.5.1]{MR1163193}.

\begin{theorem} \label{thm:spectralBesov}
Let $\alpha > 0$, $p \in (1,\infty)$ and $q \in [1,\infty]$. Then $ B^{\alpha}_{p,q}(\mathbb{R};X) =  \tilde{B}^{\alpha}_{p,q}(\mathbb{R};X)$ and for all $f \in B^{\alpha}_{p,q}(\mathbb{R};X)$ it holds
\begin{align}
\norm{f}_{\tilde{B}^{\alpha}_{p,q}(\mathbb{R};X)}  \eqsim \norm{f}_{B^{\alpha}_{p,q}(\mathbb{R};X)}.
\end{align}
\end{theorem}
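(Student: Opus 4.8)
The plan is to reduce the claimed equivalence to a comparison of the two seminorms together with a matching bound on the $L^p(\mathbb{R};X)$-parts, and then to pass between frequency localization and finite differences by hand. Since the difference seminorm in~\eqref{def:Besov01} is built from first-order increments $\tau_h$, only $\alpha \in (0,1)$ is genuinely at stake here (this is also the range used in the paper); for $\alpha \geq 1$ one replaces $\tau_h$ by higher-order differences and argues identically, cf.~\cite[Section~1.5.1]{MR1163193}. Throughout I abbreviate the Littlewood--Paley blocks by $\Delta_j f := \mathcal{F}^{-1}(\varphi_j \mathcal{F} f)$, so that $f = \sum_{j} \Delta_j f$ by item~\ref{it:ones} of Definition~\ref{def:partitionOfUnity}. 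The decisive structural observation --- and the reason the argument survives for an \emph{arbitrary} Banach space $X$ rather than only a UMD space --- is that every Fourier multiplier appearing below has a compactly supported smooth symbol, hence acts as convolution with an $L^1$-kernel; after the natural dyadic rescaling these kernels have $L^1$-norms bounded uniformly in $j$, so all the mapping properties follow from Young's convolution inequality alone.

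For the embedding $\tilde{B}^{\alpha}_{p,q} \hookrightarrow B^{\alpha}_{p,q}$ I would estimate $\tau_h f = \sum_j \tau_h \Delta_j f$ by splitting at the scale $2^j h \eqsim 1$. For the high modes $2^j h > 1$ I use the trivial bound $\norm{\tau_h \Delta_j f}_{L^p} \leq 2 \norm{\Delta_j f}_{L^p}$, while for the low modes $2^j h \leq 1$ the mean-value bound together with Bernstein's inequality gives $\norm{\tau_h \Delta_j f}_{L^p} \leq h \norm{\partial_t \Delta_j f}_{L^p} \lesssim h\, 2^j \norm{\Delta_j f}_{L^p}$. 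Multiplying by $h^{-\alpha}$ and taking the $L^q(\tfrac{\dd h}{h})$-norm (the supremum when $q=\infty$), the two geometric sums in $j$ converge precisely because $0 < \alpha < 1$, and a discrete Hardy inequality bounds the result by $\norm{\{2^{j\alpha}\norm{\Delta_j f}_{L^p}\}}_{\ell_q}$. The $L^p$-part is immediate: since $\{2^{-j\alpha}\}_j \in \ell_{q'}$, Hölder's inequality yields $\norm{f}_{L^p} \leq \sum_j \norm{\Delta_j f}_{L^p} \lesssim \norm{f}_{\tilde{B}^\alpha_{p,q}}$.

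For the reverse embedding I must recover each block $\Delta_j f$ from increments. For $j \geq 1$ I pick $h_j \eqsim 2^{-j}$ so small that $\abs{e^{i h_j \xi} - 1}$ stays bounded away from zero on $\mathrm{supp}\,\varphi_j$; then $\eta_j(\xi) := \varphi_j(\xi)/(e^{i h_j \xi}-1)$ is smooth and supported in the same annulus, and by dyadic scaling $\norm{\mathcal{F}^{-1}\eta_j}_{L^1}$ is bounded uniformly in $j$. Consequently $\Delta_j f = (\mathcal{F}^{-1}\eta_j) * \tau_{h_j} f$, and Young's inequality gives $\norm{\Delta_j f}_{L^p} \lesssim \norm{\tau_{h_j} f}_{L^p}$. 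Multiplying by $2^{j\alpha} \eqsim h_j^{-\alpha}$ and taking the $\ell_q$-norm over $j$, the resulting dyadic sum is comparable to the continuous integral $\int h^{-q\alpha}\norm{\tau_h f}_{L^p}^q\,\tfrac{\dd h}{h}$ defining the difference seminorm, since $h \mapsto \norm{\tau_h f}_{L^p}$ is controlled along dyadic scales. The low-frequency block is treated separately: as $\varphi_0$ is supported near the origin, $\norm{\Delta_0 f}_{L^p} \lesssim \norm{f}_{L^p}$ by the $L^1$-kernel bound, which is dominated by the full $B^\alpha_{p,q}$-norm.

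The main obstacle --- and the only genuinely delicate point --- is this reverse direction: one must realize the reconstruction operators $\varphi_j \mapsto \eta_j$ as convolutions with uniformly $L^1$-bounded kernels, since the division by $e^{ih_j\xi}-1$ threatens to destroy the regularity of the symbol. Verifying that the choice $h_j \eqsim 2^{-j}$ keeps $e^{ih_j\xi}-1$ uniformly elliptic on the annulus, and that the resulting kernels rescale with bounded $L^1$-mass, is exactly the step that must be carried out with care; once it is in place, everything reduces to Young's inequality and elementary summation, with the Nikolskii case $q=\infty$ --- the one needed in Lemma~\ref{lem:pressureEstimate} --- following by replacing the $\ell_q$-sums with suprema throughout. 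This reproduces the classical equivalence of~\cite[Section~1.5.1]{MR1163193} in the vector-valued setting.
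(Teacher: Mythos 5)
Your proposal is essentially correct, but note that the paper does not prove Theorem~\ref{thm:spectralBesov} at all: it is stated as a known result and delegated to \cite[Section~1.5.1]{MR1163193}. What you have written is therefore a reconstruction of the classical argument that the paper outsources, and it is the right one. Your key structural point is also the correct reason the statement holds for an arbitrary Banach space $X$: every multiplier involved ($\varphi_j$ itself, and the reconstruction symbols $\eta_j = \varphi_j/(e^{ih_j\xi}-1)$ with $h_j\eqsim 2^{-j}$ chosen so that $h_j\xi$ stays in a fixed compact set avoiding $2\pi\mathbb{Z}\setminus\{0\}$ on the annulus) rescales to a symbol with uniformly bounded $C^N$-norms and fixed compact support, hence has a uniformly $L^1$-bounded kernel, and Young's inequality for Bochner integrals does the rest --- no UMD or Fourier-type hypothesis on $X$ is needed. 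Two points should be tightened. First, in the reverse embedding the passage from the dyadic sum $\sum_j h_j^{-q\alpha}\norm{\tau_{h_j}f}_{L^p}^q$ to the continuous integral is not automatic from "control along dyadic scales": you should either replace $\norm{\tau_h f}_{L^p}$ by the monotone modulus of continuity $\omega_p(f,t)=\sup_{0<h\le t}\norm{\tau_h f}_{L^p}$ (which gives an equivalent seminorm), or observe that the ellipticity of $e^{ih\xi}-1$ persists for all $h$ in a fixed dyadic window $[c_1 2^{-j},c_2 2^{-j}]$ and average the bound $\norm{\Delta_j f}_{L^p}\lesssim\norm{\tau_h f}_{L^p}$ over that window before summing in $j$; either fix is standard but one of them must be said. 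Second, your restriction to $\alpha\in(0,1)$ for first-order differences is not a defect: the paper's own Definition in Section~\ref{sec:Function spaces} only defines $B^\alpha_{\Phi,r}$ for $\alpha\in(0,1)$, and the only value used downstream (in Lemma~\ref{lem:pressureEstimate}, via $\alpha$ and $1-\alpha$) lies in that range, so your reading of the theorem's hypothesis $\alpha>0$ as implicitly meaning the first-difference range, with higher-order differences for $\alpha\ge 1$, is the correct reconciliation.
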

\begin{corollary}\label{cor:spectralBesov}
Let $\alpha > 0$, $p \in (1,\infty)$ and $q \in [1,\infty]$. Then $ \big( B^{\alpha}_{p,q}(\mathbb{R};X) \big)' = \big( \tilde{B}^{\alpha}_{p,q}(\mathbb{R};X)\big)'$ with equivalent norms.
\end{corollary}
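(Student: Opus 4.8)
The plan is to obtain the corollary directly from Theorem~\ref{thm:spectralBesov} together with the elementary principle that equivalent norms on a fixed vector space produce identical dual spaces carrying equivalent dual norms. Consequently no harmonic analysis is needed beyond Theorem~\ref{thm:spectralBesov}; the statement is pure functional-analytic bookkeeping, and I do not anticipate any genuine obstacle.

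For brevity write $B := B^{\alpha}_{p,q}(\mathbb{R};X)$ and $\tilde B := \tilde{B}^{\alpha}_{p,q}(\mathbb{R};X)$. By Theorem~\ref{thm:spectralBesov}, $B$ and $\tilde B$ coincide as sets of Bochner-measurable functions, and there exist constants $c_1, c_2 > 0$ with $c_1 \norm{f}_{B} \leq \norm{f}_{\tilde B} \leq c_2 \norm{f}_{B}$ for all $f$ in the common space. In particular the two norms generate the same topology, so a linear functional on this space is bounded for $\norm{\cdot}_{B}$ precisely when it is bounded for $\norm{\cdot}_{\tilde B}$. This yields the set-theoretic equality $B' = \tilde B'$.

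It remains to compare the dual norms, which I would do by a direct supremum estimate. Fix $\ell \in B' = \tilde B'$. Inserting the two-sided bound between $\norm{f}_{B}$ and $\norm{f}_{\tilde B}$ into the quotient $\abs{\langle \ell, f \rangle}/\norm{f}_{\bullet}$ and taking the supremum over nonzero $f$ transfers the equivalence to the duals, giving
\begin{align*}
c_1 \norm{\ell}_{\tilde B'} \leq \norm{\ell}_{B'} \leq c_2 \norm{\ell}_{\tilde B'}.
\end{align*}
Hence $B' = \tilde B'$ with equivalent norms, which is the assertion. The entire content rests on Theorem~\ref{thm:spectralBesov}; the corollary itself is routine.
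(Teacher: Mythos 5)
Your proof is correct and matches the paper's (implicit) reasoning: the paper states this corollary without proof as an immediate consequence of Theorem~\ref{thm:spectralBesov}, relying on exactly the functional-analytic fact you spell out, namely that equivalent norms on the same space yield identical duals with equivalent dual norms. The supremum computation transferring the constants $c_1, c_2$ to the dual norms is accurate, so nothing is missing.
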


The Fourier approach allows to characterize the dual spaces, cf.~\cite[Section~2.11.2]{MR781540}. 

\begin{lemma} \label{lem:dual}
Let $X$ be reflexive, $\alpha \in \mathbb{R}$, $p\in [1,\infty)$ and $q \in [1,\infty)$. Then $\big(\tilde{B}^{\alpha}_{p, q}(\mathbb{R};X) \big)' = \tilde{B}^{-\alpha}_{p',q'}(\mathbb{R};X')$.
\end{lemma}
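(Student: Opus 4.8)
The plan is to realize $\tilde{B}^{\alpha}_{p,q}(\mathbb{R};X)$ as a retract of a weighted vector-valued sequence space and to read off its dual from the elementary duality of that sequence space. Write $\Delta_j f := \mathcal{F}^{-1}(\varphi_j \mathcal{F}(f))$ for the Littlewood--Paley blocks attached to the partition of unity of Definition~\ref{def:partitionOfUnity}, and introduce
\begin{align*}
\Lambda := \Big\{ (g_j)_{j\in\mathbb{N}_0} : g_j \in L^p(\mathbb{R};X),\ \norm{(g_j)}_\Lambda := \norm{\set{2^{j\alpha}\norm{g_j}_{L^p(\mathbb{R};X)}}_j}_{\ell_q} < \infty \Big\}.
\end{align*}
By the very definition~\eqref{def:Besov02}, the coretraction $I : f \mapsto (\Delta_j f)_j$ is an isometry from $\tilde{B}^{\alpha}_{p,q}(\mathbb{R};X)$ into $\Lambda$. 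To build a retraction I would fatten the partition by setting $\tilde\varphi_j := \sum_{\abs{k-j}\le 1}\varphi_k$, so that $\tilde\varphi_j \equiv 1$ on $\mathrm{supp}\,\varphi_j$ by the support condition~\ref{it:support}; the associated blocks $\tilde\Delta_j$ then satisfy $\tilde\Delta_j \Delta_j = \Delta_j$. The map $P : (g_j)_j \mapsto \sum_j \tilde\Delta_j g_j$ is bounded from $\Lambda$ into $\tilde{B}^{\alpha}_{p,q}(\mathbb{R};X)$ --- boundedness being exactly the Fourier-multiplier/overlap bounds encoded in~\ref{it:bounds} --- and $P I = \mathrm{id}$, so $\tilde{B}^{\alpha}_{p,q}(\mathbb{R};X)$ is a retract of $\Lambda$.

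Next I would dualize the sequence space. Since $X$ is reflexive and $1\le p<\infty$, the space $L^p(\mathbb{R};X)$ has dual $L^{p'}(\mathbb{R};X')$ under the pairing $\int_{\mathbb{R}}\langle\cdot,\cdot\rangle_{X',X}\,\dd t$; combined with $1\le q<\infty$ and H\"older's inequality in the index $j$, a direct computation identifies
\begin{align*}
\Lambda' = \Big\{ (h_j)_{j} : h_j\in L^{p'}(\mathbb{R};X'),\ \norm{\set{2^{-j\alpha}\norm{h_j}_{L^{p'}(\mathbb{R};X')}}_j}_{\ell_{q'}} < \infty \Big\},
\end{align*}
the sign flip on the weight $2^{-j\alpha}$ being precisely what H\"older produces when pairing $2^{j\alpha}\norm{g_j}$ against $2^{-j\alpha}\norm{h_j}$. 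Both hypotheses $p<\infty$ and $q<\infty$ are essential here, as $(L^\infty)'\neq L^1$ and $(\ell_\infty)'\neq\ell_1$; this is exactly why the lemma excludes those endpoints. The retract structure transfers to duals: from $PI=\mathrm{id}$ one gets $I^\ast P^\ast = \mathrm{id}$, so $P^\ast$ embeds $\big(\tilde{B}^{\alpha}_{p,q}(\mathbb{R};X)\big)'$ as a complemented subspace of $\Lambda'$, with $I^\ast$ the accompanying retraction. Note that the target $\tilde{B}^{-\alpha}_{p',q'}(\mathbb{R};X')$ makes sense regardless of the sign of $-\alpha$, since~\eqref{def:Besov02} is defined for negative smoothness as well.

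It then remains to identify this complemented copy with $\tilde{B}^{-\alpha}_{p',q'}(\mathbb{R};X')$. The natural candidate isomorphism sends $g\in\tilde{B}^{-\alpha}_{p',q'}(\mathbb{R};X')$ to the functional $f\mapsto\sum_j\int_{\mathbb{R}}\langle \tilde\Delta_j g,\Delta_j f\rangle_{X',X}\,\dd t$, which coincides with the duality pairing $\int_{\mathbb{R}}\langle g,f\rangle\,\dd t$ on Schwartz functions; its boundedness is immediate from the $\Lambda$--$\Lambda'$ H\"older estimate above. The real content is the converse, that \emph{every} $\Phi\in\big(\tilde{B}^{\alpha}_{p,q}(\mathbb{R};X)\big)'$ arises this way. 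For this I would apply $I^\ast$ to a Hahn--Banach extension of $\Phi$ to $\Lambda$, obtaining a representing sequence $(h_j)\in\Lambda'$, and reconstruct $g:=\sum_j \tilde\Delta_j h_j$ as a tempered $X'$-valued distribution; verifying $g\in\tilde{B}^{-\alpha}_{p',q'}$ with norm control, and that $g$ represents $\Phi$, again uses $\tilde\Delta_j\Delta_j=\Delta_j$ together with the finite overlap of the supports, so that the spectral blocks of $g$ are comparable to the $h_j$. This surjectivity-and-identification step is the main obstacle: everything upstream is bookkeeping, whereas here one must show the reconstructed distribution genuinely lies in the dual Besov space and reproduces $\Phi$, which is where the reflexivity of $X$ and the finiteness of $p,q$ are used in full. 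The details follow the template in~\cite[Section~2.11.2]{MR781540}.
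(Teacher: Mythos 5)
Your retract--coretract argument through the weighted sequence space $\ell_q\big(2^{j\alpha};L^p(\mathbb{R};X)\big)$, with reflexivity of $X$ and $p,q<\infty$ entering exactly where you place them, is the standard proof of this duality and is precisely the route taken in the reference the paper cites for this lemma (Triebel, Section~2.11.2); the paper itself offers no argument beyond that citation. Your sketch is correct in substance (modulo the cosmetic point that the reconstructed representative should use the blocks $\Delta_j$ rather than the fattened $\tilde\Delta_j$ so that it reproduces $\Phi$ on the nose), so there is nothing further to compare.
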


Additionally, we can control the interaction of derivatives and the differentiability parameter in the Besov-norm.
\begin{lemma} \label{app:lem_der}
Let $\alpha \in \mathbb{R}$, $p \in [1,\infty]$ and $q \in [1,\infty]$. Additionally, assume $u \in \tilde{B}^{\alpha+1}_{p,q}(\mathbb{R};X)$. Then $\partial_t u \in \tilde{B}^{\alpha}_{p,q}(\mathbb{R};X)$ and
\begin{align} \label{eq:DerivativesAndNorms}
\norm{\partial_t u}_{\tilde{B}^{\alpha}_{p,q}(\mathbb{R};X)} \lesssim \norm{u}_{\tilde{B}^{\alpha + 1}_{p,q}(\mathbb{R};X)}.
\end{align}
\end{lemma}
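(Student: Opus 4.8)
The plan is to work entirely with the Fourier-analytic definition~\eqref{def:Besov02}, letting the weight $2^{j\alpha}$ and the frequency localization $\varphi_j$ do all the bookkeeping. Since differentiation in $t$ corresponds to multiplication by $i\xi$ on the Fourier side, the $j$-th building block of $\partial_t u$ is $\mathcal{F}^{-1}(i\xi\,\varphi_j\,\mathcal{F}(u))$. On the support of $\varphi_j$ the frequency variable satisfies $\abs{\xi}\eqsim 2^j$ by condition~\ref{it:support}, so the multiplier $i\xi$ contributes a factor of size $2^j$ and nothing more; this is precisely the one-unit gain in differentiability that I want to convert into the shift $\alpha\mapsto\alpha+1$.

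To make this rigorous I would first introduce a fattened cut-off $\tilde\varphi_j$ that equals $1$ on $\mathrm{supp}\,\varphi_j$, is supported in a slightly larger dyadic annulus, and inherits the derivative bounds $\abs{\partial^k\tilde\varphi_j}\lesssim 2^{-jk}$ from~\ref{it:bounds}. Writing $i\xi\varphi_j=(i\xi\,\tilde\varphi_j)\varphi_j$ lets me peel off the normalized multiplier $m_j(\xi):=i\,2^{-j}\xi\,\tilde\varphi_j(\xi)$, so that
\begin{align*}
\mathcal{F}^{-1}\big(i\xi\,\varphi_j\,\mathcal{F}(u)\big)=2^j\,K_j*\mathcal{F}^{-1}\big(\varphi_j\mathcal{F}(u)\big),\qquad K_j:=\mathcal{F}^{-1}(m_j).
\end{align*}
The point of the fattening is that the convolution now acts on the $j$-th Littlewood--Paley block of $u$ itself, which is what keeps the final estimate summable.

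The heart of the matter is the uniform bound $\norm{K_j}_{L^1(\mathbb{R})}\lesssim 1$, independent of $j$. I would obtain it by the standard integration-by-parts split: for $\abs{x}\lesssim 2^{-j}$ use the crude bound $\abs{K_j(x)}\lesssim\int\abs{m_j}\,\mathrm{d}\xi\lesssim 2^j$ (the symbol is $O(1)$ on an annulus of measure $\eqsim 2^j$), which integrates to $O(1)$ over that ball; for $\abs{x}\gtrsim 2^{-j}$ integrate by parts $N$ times in $\xi$ and use $\abs{\partial^k m_j}\lesssim 2^{-jk}$ to obtain $\abs{K_j(x)}\lesssim\abs{x}^{-N}2^{j(1-N)}$, which for $N\ge 2$ integrates to $O(1)$ over $\{\abs{x}\gtrsim 2^{-j}\}$. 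Crucially this kernel estimate is scalar and hence indifferent to the Banach space $X$, so Young's convolution inequality for $X$-valued functions gives $\norm{K_j*f}_{L^p(\mathbb{R};X)}\le\norm{K_j}_{L^1}\norm{f}_{L^p(\mathbb{R};X)}$ with a constant independent of $X$ and of $p\in[1,\infty]$.

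Putting these together, $\norm{\mathcal{F}^{-1}(i\xi\varphi_j\mathcal{F}(u))}_{L^p}\lesssim 2^j\norm{\mathcal{F}^{-1}(\varphi_j\mathcal{F}(u))}_{L^p}$; multiplying by $2^{j\alpha}$ and taking the $\ell^q$-norm in $j$ turns the prefactor into $2^{j(\alpha+1)}$, which is exactly $\norm{u}_{\tilde{B}^{\alpha+1}_{p,q}(\mathbb{R};X)}$. I expect the main obstacle to be organizational rather than deep: one must resist bounding $2^jK_j*u$ by $2^j\norm{u}_{L^p}$ (which fails to sum) and instead route the multiplier through the fattened block via $\tilde\varphi_j$; and one must secure the kernel bound using only $L^1$-convolution estimates, since for a general non-UMD, non-Hilbertian Banach space $X$ the usual Plancherel- or multiplier-theorem shortcuts are unavailable. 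It is exactly this $L^1$-kernel route that makes every $p\in[1,\infty]$ and $q\in[1,\infty]$ endpoint survive.
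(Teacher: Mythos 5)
Your argument is correct, but it takes a different route from the paper: the paper's proof is a one-line reduction to the cited norm equivalence $\norm{u}_{\tilde{B}^{\alpha+1}_{p,q}(\mathbb{R};X)} \eqsim \norm{u}_{\tilde{B}^{\alpha}_{p,q}(\mathbb{R};X)} + \norm{\partial_t u}_{\tilde{B}^{\alpha}_{p,q}(\mathbb{R};X)}$ from Triebel (Section~2.3.8), whereas you prove the needed inequality from scratch by a Littlewood--Paley multiplier argument. Your decomposition $i\xi\varphi_j=(i\,2^{-j}\xi\,\tilde\varphi_j)\,2^j\varphi_j$ with a fattened cut-off, the uniform $L^1$ bound on the kernels $K_j$ via the near/far split and integration by parts, and scalar Young's inequality in the $X$-valued setting are all sound; the exponent bookkeeping ($2^{j(N-1)}\cdot 2^{j(1-N)}=1$ for $N\ge 2$ in one dimension) checks out, and the $j=0$ block (a ball rather than an annulus) is covered by the same estimates. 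Two cosmetic points: the fattened family should be exhibited explicitly, e.g.\ $\tilde\varphi_j=\sum_{\abs{k-j}\le 2}\varphi_k$, which equals $1$ on $\mathrm{supp}\,\varphi_j$ by conditions~\ref{it:support} and~\ref{it:ones} and inherits the bounds of~\ref{it:bounds}; and one should say a word about why $\mathcal{F}(\partial_t u)=i\xi\,\mathcal{F}(u)$ holds for the $X$-valued tempered distribution $u$. What your approach buys is self-containedness and transparency about why every endpoint $p,q\in[1,\infty]$ and every Banach space $X$ survives (everything is routed through a scalar $L^1$ kernel, avoiding UMD or Plancherel machinery); what the paper's approach buys is brevity, at the cost of outsourcing exactly this multiplier estimate to the reference.
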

\begin{proof}
Notice by~\cite[Theorem in Section~2.3.8]{MR781540} we have the equivalence
\begin{align*}
\norm{u}_{\tilde{B}^{\alpha + 1}_{p,q}(\mathbb{R};X)} \eqsim \norm{u}_{\tilde{B}^{\alpha }_{p,q}(\mathbb{R};X)} + \norm{\partial_t u}_{\tilde{B}^{\alpha }_{p,q}(\mathbb{R};X)}.
\end{align*}
The claim follows.
\end{proof}

%% New biber style  (Remove \i!!!)
\printbibliography 

%\bibliographystyle{amsalpha}
%\bibliography{numerics}

\end{document}